\title{Tent spaces over metric measure spaces under doubling and related assumptions}
\author{Alex Amenta\thanks{amenta@fastmail.fm: supported by the Australian Research Council Discovery grant DP120103692.}}
\affil{Mathematical Sciences Institute, Australian National University}
\def\@maketitle{%
  \newpage
  \null
  \vskip 2em%
  \begin{center}%
  \let \footnote \thanks
    {\LARGE \@title \par}%
    \vskip 1.5em%
    {\large
      \lineskip .5em%
      \begin{tabular}[t]{c}%
        \@author
      \end{tabular}\par}%
  \end{center}%
  \par
  \vskip 1.5em}
\begin{document}
\maketitle

\begin{abstract}
In this article, we define the Coifman--Meyer--Stein tent spaces $T^{p,q,\ga}(X)$ associated with an arbitrary metric measure space $(X,d,\gm)$ under minimal geometric assumptions.
While gradually strengthening our geometric assumptions, we prove duality, interpolation, and change of aperture theorems for the tent spaces.
Because of the inherent technicalities in dealing with abstract metric measure spaces, most proofs are presented in full detail.
\end{abstract}

\tableofcontents

\section{Introduction}

The purpose of this article is to indicate how the theory of tent spaces, as developed by Coifman, Meyer, and Stein for Euclidean space in \cite{CMS85}, can be extended to more general metric measure spaces.
Let $X$ denote the metric measure space under consideration.
If $X$ is doubling, then the methods of \cite{CMS85} seem at first to carry over without much modification.
However, there are some technicalities to be considered, even in this context.
This is already apparent in the proof of the atomic decomposition given in \cite{eR07}.

Further still, there is an issue with the proof of the main interpolation result of \cite{CMS85} (see Remark \ref{cmserror} below).
Alternate proofs of the interpolation result have since appeared in the literature --- see for example \cite{HTV91}, \cite{aB92}, \cite{CV00}, and \cite{mK11} --- but these proofs are given in the Euclidean context, and no indication is given of their general applicability.
In fact, the methods of \cite{HTV91} and \cite{aB92} can be used to obtain a partial interpolation result under weaker assumptions than doubling.
This result relies on some tent space duality; we show in Section \ref{dvvi} that this holds once we assume that the uncentred Hardy--Littlewood maximal operator is of strong type $(r,r)$ for all $r > 1$.\footnote{This fact is already implicit in \cite{CMS85}.}

Finally, we consider the problem of proving the change of aperture result when $X$ is doubling.
The proof in \cite{CMS85} implicitly uses a geometric property of $X$ which we term (NI), or `nice intersections'.
This property is independent of doubling, but holds for many doubling spaces which appear in applications --- in particular, all complete Riemannian manifolds have `nice intersections'.
We provide a proof which does not require this assumption.

\addtocontents{toc}{\protect\setcounter{tocdepth}{1}}
\subsection*{Acknowledgements}
We thank Pierre Portal and Pascal Auscher for their comments and suggestions, particularly regarding the proofs of Lemmas \ref{cptest} and \ref{measurability}.
We further thank Lashi Bandara, Li Chen, Mikko Kemppainen and Yi Huang for discussions on this work, as well as the participants of the Workshop in Harmonic Analysis and Geometry at the Australian National University for their interest and suggestions.
Finally, we thank the referee for their detailed comments.
\addtocontents{toc}{\protect\setcounter{tocdepth}{2}}

\section{Spatial assumptions}\label{assumptions}

Throughout this article, we implicitly assume that $(X,d,\gm)$ is a metric measure space; that is, $(X,d)$ is a metric space and $\gm$ is a Borel measure on $X$.
The ball centred at $x \in X$ of radius $r > 0$ is the set
\begin{equation*}
	B(x,r) := \{y \in X : d(x,y) < r\},
\end{equation*}
and we write $V(x,r) := \gm(B(x,r))$ for the volume of this set.
We assume that the volume function $V(x,r)$ is finite\footnote{Since $X$ is a metric space, this implies that $\gm$ is $\gs$-finite.} and positive; one can show that $V$ is automatically measurable on $X \times \RR_+$.

There are four geometric assumptions which we isolate for future reference:

\begin{description}
\item[(Proper)]a subset $S \subset X$ is compact if and only if it is both closed and bounded, and the volume function $V(x,r)$ is lower semicontinuous as a function of $(x,r)$;\footnote{Note that this is a strengthening of the usual definition of a proper metric space, as the usual definition does not involve a measure.
We have abused notation by using the word `proper' in this way, as it is convenient in this context.}

\item[(HL)] the uncentred Hardy--Littlewood maximal operator $\mc{M}$, defined for measurable functions $f$ on $X$ by
\begin{equation}\label{uHL}
	\mc{M}(f)(x) := \sup_{B \ni x} \frac{1}{\gm(B)} \int_B |f(y)| \, d\gm(y)
\end{equation}
where the supremum is taken over all balls $B$ containing $x$, is of strong type $(r,r)$ for all $r > 1$;

\item[(Doubling)]
there exists a constant $C > 0$ such that for all $x \in X$ and $r > 0$,
\begin{equation*}
	V(x,2r) \leq CV(x,r);
\end{equation*}

\item[(NI)]
for all $\ga,\gb > 0$ there exists a positive constant $c_{\ga,\gb} > 0$ such that for all $r > 0$ and for all $x, y \in X$ with $d(x,y) < \ga r$,
\begin{equation*}
	\frac{\gm(B(x,\ga r) \cap B(y,\gb r))}{V(x, \ga r)} \geq c_{\ga,\gb}.
\end{equation*}

\end{description}

We do not assume that $X$ satisfies any of these assumptions unless mentioned otherwise.
However, readers are advised to take $(X,d,\gm)$ to be a complete Riemannian manifold with its geodesic distance and Riemannian volume if they are not interested in such technicalities.

It is well-known that doubling implies (HL).
However, the converse is not true. 
See for example \cite{FSSU02} and \cite{pS83}, where it is shown that (HL) is true for $\RR^2$ with the Gaussian measure.
We will only consider (NI) along with doubling, so we remark that doubling does not imply (NI): one can see this by taking $\RR^2$ (now with Lebesgue measure) and removing an open strip.\footnote{One could instead remove an open bounded region with sufficiently regular boundary, for example an open square. This yields a connected example.}
One can show that all complete doubling length spaces---in particular, all complete doubling Riemannian manifolds---satisfy (NI).

\section{The basic tent space theory}

\subsection{Initial definitions and consequences}\label{initdefns}

Let $X^+$ denote the `upper half-space' $X \times \RR_+$, equipped with the product measure $d\gm(y) \, dt/t$ and the product topology.
Since $X$ and $\RR_+$ are metric spaces, with $\RR_+$ separable, the Borel $\gs$-algebra on $X^+$ is equal to the product of the Borel $\gs$-algebras on $X$ and $\RR_+$, and so the product measure on $X^+$ is Borel (see \cite[Lemma 6.4.2(i)]{vB07II}).

We say that a subset $C \subset X^+$ is \emph{cylindrical} if it is contained in a cylinder: that is, if there exists $x \in X$ and $a,b,r > 0$ such that $C \subset B(x,r) \times (a,b)$.
Note that cylindricity is equivalent to boundedness when $X^+$ is equipped with an appropriate metric, and that compact subsets of $X^+$ are cylindrical.

Cones and tents are defined as usual: for each $x \in X$ and $\ga > 0$, the \emph{cone of aperture $\ga$ with vertex $x$} is the set
\begin{equation*}
	\gG^\ga(x) := \{(y,t) \in X^+ : y \in B(x,\ga t)\}.
\end{equation*}
For any subset $F \subset X$ we write
\begin{equation*}
	\gG^\ga(F) := \bigcup_{x \in F} \gG^\ga(x).
\end{equation*}
For any subset $O \subset X$, the \emph{tent of aperture $\ga$ over $O$} is defined to be the set
\begin{equation*}
	T^\ga(O) := (\gG^\ga(O^c))^c.
\end{equation*}
Writing
\begin{equation*}
  F_O(y,t) := \frac{\dist(y,O^c)}{t} = t^{-1} \inf_{x \in O^c} d(y,x),
\end{equation*}
one can check that $T^\ga(O) = F_O^{-1}((\ga,\infty))$.
Since $F_O$ is continuous (due to the continuity of $\dist(\cdot,O^c)$), we find that tents over open sets are measurable, and so it follows that cones over closed sets are also measurable.
We remark that tents (resp. cones) over non-open (resp. non-closed) sets may not be measurable.

Let $F \subset X$ be such that $O := F^c$ has finite measure.
Given $\gg \in (0,1)$, we say that a point $x \in X$ has \emph{global $\gg$-density with respect to $F$} if for all balls $B$ containing $x$,
\begin{equation*}
	\frac{\gm(B \cap F)}{\gm(B)} \geq \gg.
\end{equation*}
We denote the set of all such points by $F_\gg^*$, and define $O_\gg^* := (F_\gg^*)^c$.
An important fact here is the equality
\begin{equation*}
	O_\gg^* = \{x \in X : \mc{M}(\mb{1}_O)(x) > 1-\gg\},
\end{equation*}
where $\mb{1}_O$ is the indicator function of $O$.
We emphasise that $\mc{M}$ denotes the \emph{uncentred} maximal operator.
When $O$ is open (i.e. when $F$ is closed), this shows that $O \subset O_\gg^*$ and hence that $F_\gg^* \subset F$.
Furthermore, the function $\mc{M}(\mb{1}_O)$ is lower semicontinuous whenever $\mb{1}_O$ is locally integrable (which is always true, since we assumed $O$ has finite measure), which implies that $F_\gg^*$ is closed (hence measurable) and that $O_\gg^*$ is open (hence also measurable).
Note that if $X$ is doubling, then since $\mc{M}$ is of weak-type $(1,1)$, we have that
\begin{equation*}
	\gm(O_\gg^*) \lesssim_{\gg,X} \gm(O).
\end{equation*}

\begin{rmk}
	In our definition of points of $\gg$-density, we used balls containing $x$ rather than balls centred at $x$ (as is usually done).
	This is done in order to avoid using the centred maximal function, which may not be measurable without assuming continuity of the volume function $V(x,r)$.
\end{rmk}

Here we find it convenient to introduce the notion of the \emph{$\ga$-shadow} of a subset of $X^+$.
For a subset $C \subset X^+$, we define the $\ga$-shadow of $C$ to be the set
\begin{equation*}
	S^\ga(C) := \{x \in X : \gG^\ga(x) \cap C \neq \varnothing\}.
\end{equation*}
Shadows are always open, for if $A \subset X^+$ is any subset, and if $x \in S^\ga(A)$, then there exists a point $(z,t_z) \in \gG^\ga(x) \cap A$, and one can easily show that $B(x,\ga t_z - d(x,z))$ is contained in $S^\ga(A)$.

The starting point of the tent space theory is the definition of the operators $\mc{A}_q^\ga$ and $\mc{C}_q^\ga$.
For $q \in (0,\infty)$, the former is usually defined for measurable functions $f$ on $\RR_+^{n+1}$ (with values in $\RR$ or $\CC$, depending on context) by
\begin{equation*}
	\mc{A}_q^\ga(f)(x)^q := \iint_{\gG^\ga(x)} |f(y,t)|^q \, \frac{d\gl(y) \, dt}{t^{n+1}}
\end{equation*}
where $x \in \RR^n$ and $\gl$ is Lebesgue measure.
There are four reasonable ways to generalise this definition to our possibly non-doubling metric measure space $X$:\footnote{We do not claim that these are the only reasonable generalisations.} these take the form
\begin{equation*}
	\mc{A}_q^\ga(f)(x)^q := \iint_{\gG^\ga(x)} |f(y,t)|^q \, \frac{d\gm(y)}{V(\mb{a},\mb{b}t)}\, \frac{dt}{t}
\end{equation*}
where $\mb{a} \in \{x,y\}$ and $\mb{b} \in \{1,\ga\}$.
In all of these definitions, if a function $f$ on $X^+$ is supported on a subset $C \subset X^+$, then $\mc{A}_q^\ga(f)$ is supported on $S^\ga(C)$; we will use this fact repeatedly in what follows.
Measurability of $\mc{A}_q^\ga(f)(x)$ in $x$ when $\mb{a} = y$ follows from Lemma \ref{measurability} in the Appendix; the choice $\mb{a} = x$ can be taken care of with a straightforward modification of this lemma.
The choice $\mb{a} = x$, $\mb{b} = 1$ appears in \cite{AMR08,eR07}, and the choice $\mb{a} = y$, $\mb{b} = 1$ appears in \cite[\textsection 3]{MvNP11}.
These definitions all lead to equivalent tent spaces when $X$ is doubling.
We will take $\mb{a} = y$, $\mb{b} = \ga$ in our definition, as it leads to the following fundamental technique, which works with no geometric assumptions on $X$.

\begin{lem}[Averaging trick]\label{avgtrick}
	Let $\ga > 0$, and suppose $\gF$ is a nonnegative measurable function on $X^+$.
	Then
	\begin{equation*}
		\int_X \iint_{\gG^\ga(x)} \gF(y,t) \, \frac{d\gm(y)}{V(y,\ga t)} \, \frac{dt}{t} \, d\gm(x) = \iint_{X^+} \gF(y,t) \, d\gm(y) \, \frac{dt}{t}.
	\end{equation*}
\end{lem}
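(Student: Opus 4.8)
The plan is to prove the identity by a direct application of the Tonelli--Fubini theorem, exploiting the fact that the cone $\gG^\ga(x)$ is defined by the symmetric condition $y \in B(x,\ga t)$, which is equivalent to $x \in B(y,\ga t)$. First I would rewrite the left-hand side by inserting the indicator of the cone, so that
\begin{equation*}
	\int_X \iint_{X^+} \mb{1}_{\gG^\ga(x)}(y,t) \, \gF(y,t) \, \frac{d\gm(y)}{V(y,\ga t)} \, \frac{dt}{t} \, d\gm(x),
\end{equation*}
and then observe that $\mb{1}_{\gG^\ga(x)}(y,t) = \mb{1}_{B(x,\ga t)}(y) = \mb{1}_{B(y,\ga t)}(x)$, since $d(x,y) < \ga t$ is symmetric in $x$ and $y$.

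The main step is then to justify swapping the order of integration: since the integrand is nonnegative and measurable (here I would invoke the measurability of $V$ on $X \times \RR_+$ noted in the text, together with the joint measurability of $\gF$ and of the cone indicator, so that the full integrand is a nonnegative measurable function on $X \times X^+$), Tonelli's theorem applies and lets me pull the $x$-integral inside. After the swap, the inner integral over $x \in X$ is
\begin{equation*}
	\int_X \mb{1}_{B(y,\ga t)}(x) \, d\gm(x) = \gm(B(y,\ga t)) = V(y,\ga t),
\end{equation*}
which cancels exactly against the factor $1/V(y,\ga t)$ — this is precisely the point of the choice $\mb{a} = y$, $\mb{b} = \ga$ in the definition of $\mc{A}_q^\ga$. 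What remains is $\iint_{X^+} \gF(y,t) \, d\gm(y) \, dt/t$, as claimed.

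The only genuine obstacle is the measurability and $\gs$-finiteness bookkeeping needed to license Tonelli's theorem. Joint measurability of $(x,y,t) \mapsto \mb{1}_{B(y,\ga t)}(x)$ follows because it is the indicator of the set $\{(x,y,t) : d(x,y) < \ga t\}$, which is open in $X \times X^+$ by continuity of the metric; joint measurability of $(x,y,t) \mapsto 1/V(y,\ga t)$ follows from the stated measurability of $V$ and its positivity; and $\gF$ is measurable by hypothesis. The product measure $d\gm(x)\,d\gm(y)\,dt/t$ on $X \times X^+$ is $\gs$-finite because $\gm$ is $\gs$-finite (as noted, a consequence of $V(x,r) < \infty$) and $dt/t$ is $\gs$-finite on $\RR_+$. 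With these points in hand the computation above is rigorous, and no geometric assumption on $X$ is used anywhere.
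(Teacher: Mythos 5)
Your proof is correct and follows exactly the same route as the paper's: insert the indicator of the cone, use the symmetry $\mb{1}_{B(x,\ga t)}(y) = \mb{1}_{B(y,\ga t)}(x)$, apply Tonelli, and cancel $V(y,\ga t)$ against the denominator. The additional measurability and $\gs$-finiteness bookkeeping you supply is a welcome (and correct) elaboration of what the paper leaves implicit.
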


\begin{proof}
	This is a straightforward application of Fubini--Tonelli's theorem, which we present explicitly due to its importance in what follows:
	\begin{align*}
		\int_X \iint_{\gG^\ga(x)} \gF(y,t) \, \frac{d\gm(y)}{V(y,\ga t)} \frac{dt}{t} \, d\gm(x)
		&= \int_X \int_0^\infty \int_X \mb{1}_{B(x,\ga t)}(y) \gF(y,t) \, \frac{d\gm(y)}{V(y,\ga t)} \, \frac{dt}{t} \, d\gm(x) \\
		&= \int_0^\infty \int_X \int_X \mb{1}_{B(y,\ga t)}(x) \, d\gm(x) \, \gF(y,t) \, \frac{d\gm(y)}{V(y,\ga t)} \, \frac{dt}{t} \\
		&= \int_0^\infty \int_X \frac{V(y,\ga t)}{V(y,\ga t)} \gF(y,t) \, d\gm(y) \, \frac{dt}{t} \\
		&= \iint_{X^+} \gF(y,t) \, d\gm(y) \, \frac{dt}{t}.
	\end{align*}
\end{proof}

We will also need the following lemma in order to prove that our tent spaces are complete.
Here we need to make some geometric assumptions.

\begin{lem}\label{cptest}
	Let $X$ be proper or doubling.
	Let $p,q,\ga > 0$, let $K \subset X^+$ be cylindrical, and suppose $f$ is a measurable function on $X^+$.
	Then
	\begin{equation}\label{cptesteq}
		\norm{\mc{A}_q^\ga(\mb{1}_K f)}_{L^p(X)} \lesssim \norm{f}_{L^q(K)} \lesssim \norm{\mc{A}_q^\ga(f)}_{L^p(X)},
	\end{equation}
	with implicit constants depending on $p$, $q$, $\ga$, and $K$.
\end{lem}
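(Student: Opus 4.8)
The two inequalities in \eqref{cptesteq} are essentially dual to one another, but the key point is that on a cylindrical set $K$ the operator $\mc{A}_q^\ga$ behaves like a weighted $L^q$ norm, uniformly. The plan is to reduce everything to the averaging trick (Lemma \ref{avgtrick}) together with two geometric estimates: first, that the shadow $S^\ga(K)$ has finite measure, and second, that the weight $V(y,\ga t)$ is comparable to a constant (depending on $K$) for $(y,t)$ ranging over $K$. Let me first enlarge $K$ to a standard cylinder $\bar K = B(x_0,r) \times (a,b)$ containing it, so it suffices to prove both inequalities with $K$ replaced by $\bar K$; for the right-hand inequality I use $\mb{1}_K f = \mb{1}_K (\mb{1}_{\bar K} f)$ and monotonicity to pass between the two cylinders.

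For the left-hand inequality: since $\mb{1}_{\bar K} f$ is supported in $\bar K$, the function $\mc{A}_q^\ga(\mb{1}_{\bar K} f)$ is supported in the shadow $S^\ga(\bar K)$, which is open. I claim $S^\ga(\bar K)$ is bounded: if $x \in S^\ga(\bar K)$ then $\gG^\ga(x)$ meets $\bar K$, so there is $t \in (a,b)$ with $d(x,x_0) < \ga t + r < \ga b + r$, giving $S^\ga(\bar K) \subset B(x_0, \ga b + r)$. Under either hypothesis (proper or doubling) this ball has finite measure — for proper spaces, a closed bounded set is compact, hence of finite measure since $V$ is finite; for doubling spaces, finiteness of $V(x_0,\cdot)$ is assumed outright. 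Next, for $(y,t) \in \gG^\ga(x)$ with $x \in S^\ga(\bar K)$ and $(y,t) \in \bar K$, I have $t \in (a,b)$ and hence $V(y,\ga t) \geq V(y,\ga a)$; moreover $y \in B(x_0,r)$, so $B(y,\ga a) \supset B(x_0, \ga a - r)$ if $r < \ga a$, and in general (shrinking $a$ if necessary, which only weakens the cylinder) one gets $V(y, \ga t) \gtrsim_{\bar K} 1$ uniformly — here I must be slightly careful, using that $V$ is positive and, in the proper case, lower semicontinuous on the compact set $\overline{B(x_0,r)} \times [\ga a, \ga b]$ to get a uniform positive lower bound; in the doubling case one compares $V(y,\ga a)$ to $V(x_0, \ga a + r)$ from above and $V(x_0, \text{small})$ from below via doubling. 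Granting $V(y,\ga t)^{-1} \lesssim_{\bar K} 1$ on the relevant region, I bound
\begin{equation*}
	\mc{A}_q^\ga(\mb{1}_{\bar K} f)(x)^q = \iint_{\gG^\ga(x)} |\mb{1}_{\bar K} f(y,t)|^q \frac{d\gm(y)}{V(y,\ga t)} \frac{dt}{t} \lesssim_{\bar K} \iint_{\gG^\ga(x)} |\mb{1}_{\bar K} f(y,t)|^q \, d\gm(y) \, \frac{dt}{t},
\end{equation*}
then integrate in $x$ over $S^\ga(\bar K)$. Here I would NOT want to integrate the cone integral against $d\gm(x)$ freely (that is the averaging trick and gives the wrong constant placement); instead I simply note $\mc{A}_q^\ga(\mb{1}_{\bar K}f)$ is supported on a set of finite measure $M := \gm(S^\ga(\bar K))$, so if $p \geq q$ I can first bound the $L^p$ norm by $M^{1/p - 1/q}$ times the $L^q$ norm (or handle $p<q$ separately), and for the $L^q$ norm I apply Lemma \ref{avgtrick} with $\gF = |\mb{1}_{\bar K} f|^q$ to get exactly $\norm{f}_{L^q(\bar K)}^q$ after restoring the weight — actually the cleanest route is: insert the weight back, $\mc{A}_q^\ga(\mb{1}_{\bar K}f)(x)^q \lesssim_{\bar K} \big(\sup_{\bar K} V(y,\ga t)^{-1}\big)\cdot\big(\inf \text{something}\big)^{-1}\cdots$; rather than belabour this, the point is that after using $V(y,\ga t)^{-1}\lesssim_{\bar K}1$ and then $1 \lesssim_{\bar K} V(y,\ga t)^{-1}$ in the reverse direction one reduces to Lemma \ref{avgtrick} applied to $\gF = |\mb{1}_{\bar K}f|^q$, giving $\int_X \mc{A}_q^\ga(\mb{1}_{\bar K}f)^q \, d\gm \lesssim_{\bar K} \norm{f}_{L^q(\bar K)}^q$, and then Hölder on the finite-measure set $S^\ga(\bar K)$ converts the $L^q$ bound into the desired $L^p$ bound.

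For the right-hand inequality $\norm{f}_{L^q(\bar K)} \lesssim \norm{\mc{A}_q^\ga(f)}_{L^p(X)}$: again using $V(y,\ga t)^{-1} \gtrsim_{\bar K} 1$ on $\bar K$, Lemma \ref{avgtrick} gives
\begin{equation*}
	\norm{f}_{L^q(\bar K)}^q \lesssim_{\bar K} \iint_{X^+} |\mb{1}_{\bar K}f(y,t)|^q \, d\gm(y) \frac{dt}{t} = \int_X \mc{A}_q^\ga(\mb{1}_{\bar K}f)(x)^q \, d\gm(x) \leq \int_{S^\ga(\bar K)} \mc{A}_q^\ga(f)(x)^q \, d\gm(x),
\end{equation*}
and then Hölder on $S^\ga(\bar K)$ (finite measure) upgrades the $L^q$ integral to an $L^p$ norm if $p \geq q$, while for $p < q$ one inserts the pointwise bound differently — in fact for $p \le q$ one uses $\norm{g}_{L^q} \leq \norm{g}_{L^p}^{?}$ fails in general, so I would instead observe $\mc{A}_q^\ga(f)(x) \leq \mc{A}_q^\ga(f)(x)$ is already pointwise controlled and use that on the finite-measure shadow all $L^p$ norms of a function supported there are comparable up to constants depending on $M$ only when the function is also bounded — which it need not be. The honest fix, which I expect to be the main technical obstacle, is the interplay between $p$ and $q$: the clean statement is that on a fixed finite-measure set, $\norm{g}_{L^q} \lesssim_M \norm{g}_{L^p}$ when $q \leq p$, and the opposite when $p \leq q$, so one simply splits into the two cases and applies Hölder in the appropriate direction, using that $\mc{A}_q^\ga(\mb{1}_{\bar K}f)$ and $\mc{A}_q^\ga(f)$ are both supported in $S^\ga(\bar K)$ and the former is pointwise $\le$ the latter. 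I do not anticipate any difficulty beyond bookkeeping once the two geometric facts — finiteness of $\gm(S^\ga(\bar K))$ and the uniform two-sided bound on $V(y,\ga t)$ over $\bar K$ — are established, and those are where the hypothesis "$X$ proper or doubling" is genuinely used (properness via compactness + lower semicontinuity of $V$; doubling via the explicit doubling inequality applied finitely many times to pass between $V(x_0,\cdot)$ at comparable radii).
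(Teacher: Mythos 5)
Your identification of the two geometric ingredients --- the uniform two-sided bound $c_0 \leq V(y,\ga t) \leq c_1$ on the cylinder (via lower semicontinuity plus compactness, or via doubling) and the finiteness of $\gm(S^\ga(K))$ --- is correct and matches the paper. Your first inequality is also essentially fine, though not for the reason you give: since $\mc{A}_q^\ga(\mb{1}_K f)(x)^q \leq c_0^{-1}\norm{f}_{L^q(K)}^q$ is a \emph{pointwise} bound by a constant, and $\mc{A}_q^\ga(\mb{1}_K f)$ is supported on the finite-measure shadow, integrating the $p$-th power gives the claim for every $p>0$ with no H\"older step at all; the step you propose, $\norm{g}_{L^p} \leq M^{1/p-1/q}\norm{g}_{L^q}$, is valid only for $p\leq q$ and so goes the wrong way in the case $p\geq q$ where you invoke it.

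The genuine gap is in the second inequality when $p < q$. Your route reduces, via the averaging trick, to proving $\norm{\mc{A}_q^\ga(f)}_{L^q(S^\ga(K))} \lesssim \norm{\mc{A}_q^\ga(f)}_{L^p(X)}$. On a finite-measure set, H\"older gives $\norm{g}_{L^q} \lesssim \norm{g}_{L^p}$ only when $q \leq p$; when $p<q$ the only inequality H\"older provides is $\norm{g}_{L^p} \lesssim \norm{g}_{L^q}$, the reverse of what you need, and no general inequality in the required direction exists. So your proposed fix --- ``split into the two cases and apply H\"older in the appropriate direction'' --- does not close the case $p<q$; you correctly flag this as the main obstacle but do not overcome it. (For $p \geq q$ your argument is valid and is in fact a slightly shorter route than the paper's.) The missing idea is a localization that makes the relevant quantity \emph{constant} on small balls: cover $\overline{B(x,r)}$ by finitely many balls $B(x_n,\ga a/2)$ (by compactness, or by geometric doubling). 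For each $n$ the number $I_n := \iint_K \mb{1}_{B(x_n,\ga a/2)}(y)\,|f(y,t)|^q\,\frac{d\gm(y)}{V(y,\ga t)}\,\frac{dt}{t}$ satisfies $I_n \leq \mc{A}_q^\ga(f)(x)^q$ for \emph{every} $x\in B(x_n,\ga a/2)$, because $d(x,y)<\ga a<\ga t$ forces $(y,t)\in\gG^\ga(x)$. Since $I_n$ is a constant, $I_n^{1/q} = \bigl(I_n^{p/q}\bigr)^{1/p}$ equals its own $L^p$-average over $B(x_n,\ga a/2)$, which is dominated by $V(x_n,\ga a/2)^{-1/p}\norm{\mc{A}_q^\ga(f)}_{L^p(X)}$ for \emph{all} $p>0$; summing the finitely many $I_n^{1/q}$ and using the upper bound $c_1$ on the weight recovers $\norm{f}_{L^q(K)}$. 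This covering step is exactly where the paper's proof departs from yours, and it is what makes the case $p<q$ work.
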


\begin{proof}
  Write
  \begin{equation*}
    K \subset B(x,r) \times (a,b) =: C
  \end{equation*}
  for some $x \in X$ and $a,b,r > 0$.
  We claim that there exist constants $c_0,c_1 > 0$ such that for all $(y,t) \in C$,
  \begin{equation*}
    c_0 \leq V(y,\ga t) \leq c_1.
  \end{equation*}
  If $X$ is proper, this is an immediate consequence of the lower semicontinuity of the ball volume function (recall that we are assuming this whenever we assume $X$ is proper) and the compactness of the closed cylinder $\overline{B(x,r)} \times [a,b]$.
  If $X$ is doubling, then we argue as follows.
  Since $V(y,\ga t)$ is increasing in $t$, we have that
  \begin{equation*}
    \min_{(y,t) \in C} V(y,\ga t) \geq \min_{y \in B(x,r)} V(y,\ga a)
  \end{equation*}
  and
  \begin{equation*}
    \max_{(y,t) \in C} V(y,\ga t) \leq \max_{y \in B(x,r)} V(y,\ga b).
  \end{equation*}
  By the argument in the proof of Lemma \ref{betas} (in particular, by \eqref{infimum}), there exists $c_0 > 0$ such that
  \begin{equation*}
    \min_{y \in B(x,r)} V(y,\ga a) \geq c_0.
  \end{equation*}
  Furthermore, since
  \begin{equation*}
    V(y,\ga b) \leq V(x, \ga b + r)
  \end{equation*}
  for all $y \in B(x,r)$, we have that
  \begin{equation*}
    \max_{y \in B(x,r)} V(y,\ga b) \leq V(x, \ga b + r) =: c_1,
  \end{equation*}
  proving the claim.

  To prove the first estimate of \eqref{cptesteq}, write
  \begin{align*}
    \norm{\mc{A}_q^\ga(\mb{1}_K f)}_{L^p(X)}
    &= \left(\int_{S^\ga(K)} \left( \iint_{\gG^\ga(x)} \mb{1}_K(y,t)|f(y,t)|^q \, \frac{d\gm(y)}{V(y,\ga t)} \, \frac{dt}{t} \right)^{\frac{p}{q}} \, d\gm(x) \right)^{\frac{1}{p}} \\
    &\lesssim_{c_0,q} \left( \int_{S^\ga(K)} \left( \iint_K |f(y,t)|^q \, d\gm(y) \, \frac{dt}{t} \right)^{\frac{p}{q}} \, d\gm(x) \right)^{\frac{1}{p}} \\
    &\lesssim_K \norm{f}_{L^q(K)}.
  \end{align*}

  To prove the second estimate, first choose finitely many points $(x_n)_{n=1}^N$ such that
  \begin{equation*}
    \overline{B(x,r)} \subset \bigcup_{n=1}^N B(x_n,\ga a/2)
  \end{equation*}
  using either compactness of $\overline{B(x,r)}$ (in the proper case) or doubling.\footnote{In the doubling case, this is a consequence of what is usually called `geometric doubling'. A proof that this follows from the doubling condition can be found in \cite[\textsection III.1]{CW71}. }
  We then have
  \begin{align*}
    \left( \iint_K |f(y,t)|^q \, d\gm(y) \, \frac{dt}{t} \right)^{\frac{1}{q}}
      &\lesssim_{c_1} \left( \iint_K \sum_{n=1}^N \mb{1}_{B(x_n,\ga a/2)}(y) |f(y,t)|^q \, \frac{d\gm(y)}{V(y,\ga t)} \, \frac{dt}{t} \right)^{\frac{1}{q}} \\
      &\lesssim_{X,q} \sum_{n=1}^N \left( \iint_{K} \mb{1}_{B(x_n,\ga a/2)}(y) |f(y,t)|^q \, \frac{d\gm(y)}{V(y,\ga t)} \, \frac{dt}{t} \right)^{\frac{1}{q}}.
  \end{align*}
  If $x,y \in B(x_n,\ga a/2)$, then $d(x,y) < \ga a < \ga t$ (since $t > a$), and so
  \begin{equation}\label{containment}
    \iint_K \mb{1}_{B(x_n,\ga a/2)}(y) |f(y,t)|^q \, \frac{d\gm(y)}{V(y,\ga t)} \, \frac{dt}{t}
    \leq \iint_{\gG^\ga(x)} |f(y,t)|^q \, \frac{d\gm(y)}{V(y,\ga t)} \, \frac{dt}{t}.
  \end{equation}
  When $p \geq q$, we use H\"older's inequality along with \eqref{containment} to write 
  \begin{align*}
    &\sum_{n=1}^N \left( \iint_K \mb{1}_{B(x_n,\ga a/2)}(y) |f(y,t)|^q \, \frac{d\gm(y)}{V(y,\ga t)} \, \frac{dt}{t} \right)^{\frac{1}{q}} \\
    &= \sum_{n=1}^N \left( \frac{1}{V(x_n,\ga a/2)} \int_{B(x_n,\ga a/2)} \iint_K \mb{1}_{B(x_n,\ga a/2)}(y) |f(y,t)|^q \, \frac{d\gm(y)}{V(y,\ga t)} \, \frac{dt}{t} \, d\gm(x) \right)^{\frac{1}{q}} \\
    &\leq \sum_{n=1}^N \left( \frac{1}{V(x_n,\ga a/2)} \int_{B(x_n,\ga a/2)} \left( \iint_K \mb{1}_{B(x_n,\ga a/2)}(y) |f(y,t)|^q \, \frac{d\gm(y)}{V(y,\ga t)} \, \frac{dt}{t}\right)^{\frac{p}{q}} d\gm(x) \right)^{\frac{1}{p}} \\
    &\leq \sum_{n=1}^N \left( \frac{1}{V(x_n,\ga a/2)} \int_{B(x_n,\ga a/2)} \left(\iint_{\gG^\ga(x)} |f(y,t)|^q \, \frac{d\gm(y)}{V(y,\ga t)} \, \frac{dt}{t}\right)^{\frac{p}{q}} d\gm(x) \right)^{\frac{1}{p}} \\
    &\lesssim_{K,p} \norm{\mc{A}_q^\ga(f)}_{L^p(X)},
  \end{align*}
  completing the proof in this case.
  When  $p < q$, the situtaion can be handled using Minkowski's inequality as follows.
  Using $p/q < 1$, we have
  \begin{align*}
    &\left( \frac{1}{V(x_n,\ga a/2)} \int_{B(x_n,\ga a/2)} \iint_K \mb{1}_{B(x_n,\ga a/2)}(y) |f(y,t)|^q \, \frac{d\gm(y)}{V(y,\ga t)} \, \frac{dt}{t} \, d\gm(x) \right)^{\frac{1}{q}} \\
    &\leq \left( \frac{1}{V(x_n,\ga a/2)} \left( \int_{B(x_n,\ga a/2)} \left( \iint_{K} \mb{1}_{B(x_n,\ga a/2)}(y) |f(y,t)|^q \, \frac{d\gm(y)}{V(y,\ga t)} \, \frac{dt}{t} \right)^{\frac{p}{q}} d\gm(x) \right)^{\frac{q}{p}} \right)^{\frac{1}{q}} \\
    &\leq C\left( \frac{1}{V(x_n,\ga a/2)} \int_{B(x_n,\ga a/2)} \left( \iint_K \mb{1}_{B(x_n,\ga a/2)}(y) |f(y,t)|^q \, \frac{d\gm(y)}{V(y,\ga t)} \, \frac{dt}{t} \right)^{\frac{p}{q}} d\gm(x) \right)^{\frac{1}{p}},
  \end{align*}
  where
  \begin{equation*}
    C = C(p,q,\ga,K) = \max_{n} \left(V(x_n,\ga a/2)^{\frac{1}{p}-\frac{1}{q}}\right).
  \end{equation*}
  We can then proceed as in the case where $p \geq q$.
\end{proof}

As usual, with $\ga > 0$ and $p,q \in (0,\infty)$, we define the tent space (quasi-)norm of a measurable function $f$ on $X^+$ by
\begin{equation*}
	\norm{f}_{T^{p,q,\ga}(X)} := \norm{\mc{A}_q^\ga(f)}_{L^p(X)},
\end{equation*}
and the tent space $T^{p,q,\ga}(X)$ to be the (quasi-)normed vector space consisting of all such $f$ (defined almost everywhere) for which this quantity is finite.

\begin{rmk}
  One can define the tent space as either a real or complex vector space, according to one's own preference.
  We will implicitly work in the complex setting (so our functions will always be $\CC$-valued).
  Apart from complex interpolation, which demands that we consider complex Banach spaces, the difference is immaterial.
\end{rmk}

\begin{prop}\label{completenessetc}
	Let $X$ be proper or doubling.
	For all $p,q,\ga \in (0,\infty)$, the tent space $T^{p,q,\ga}(X)$ is complete and contains $L_c^q(X^+)$ (the space of functions $f \in L^q(X^+)$ with cylindrical support) as a dense subspace.
\end{prop}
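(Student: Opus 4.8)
The plan is to prove the two assertions separately, using Lemma~\ref{cptest} as the main tool in both: that lemma says exactly that, on any fixed cylinder, the tent-space quasi-norm and the $L^q$ quasi-norm are comparable, which is precisely what is needed to transfer between $T^{p,q,\ga}(X)$ and $L^q$ on cylinders. Throughout, fix a point $x_0 \in X$ and set $K_n := B(x_0,n) \times (1/n,n)$; these are cylindrical subsets forming an increasing sequence with $\bigcup_n K_n = X^+$, and this exhaustion requires no geometric hypothesis. Recall also that, for our choice $\mb{a}=y$, $\mb{b}=\ga$, the function $\mc{A}_q^\ga(g)$ is measurable for every measurable $g$ on $X^+$ (Lemma~\ref{measurability}).

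\emph{Density.} First, $L_c^q(X^+) \subseteq T^{p,q,\ga}(X)$ is immediate from the first inequality in \eqref{cptesteq}: if $f \in L^q(X^+)$ is supported in a cylinder $K$, then $\norm{f}_{T^{p,q,\ga}(X)} = \norm{\mc{A}_q^\ga(\mb{1}_K f)}_{L^p(X)} \lesssim_K \norm{f}_{L^q(K)} < \infty$. For density, let $f \in T^{p,q,\ga}(X)$ and consider the truncations $\mb{1}_{K_n} f$. By the second inequality in \eqref{cptesteq}, $\norm{\mb{1}_{K_n} f}_{L^q(X^+)} = \norm{f}_{L^q(K_n)} \lesssim_{K_n} \norm{\mc{A}_q^\ga(f)}_{L^p(X)} < \infty$, so $\mb{1}_{K_n} f \in L_c^q(X^+)$. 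It remains to see $\mb{1}_{K_n} f \to f$ in $T^{p,q,\ga}(X)$. For each $x$ we have $\mc{A}_q^\ga(f - \mb{1}_{K_n} f)(x)^q = \iint_{\gG^\ga(x)} \mb{1}_{X^+ \setminus K_n}(y,t)\, |f(y,t)|^q \, \frac{d\gm(y)}{V(y,\ga t)} \, \frac{dt}{t}$, which decreases to $0$ as $n \to \infty$ at every $x$ with $\mc{A}_q^\ga(f)(x) < \infty$, hence at a.e.\ $x$; since moreover $\mc{A}_q^\ga(f - \mb{1}_{K_n}f) \leq \mc{A}_q^\ga(f) \in L^p(X)$ pointwise, dominated convergence in $L^p(X)$ gives $\norm{f - \mb{1}_{K_n} f}_{T^{p,q,\ga}(X)} \to 0$.

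\emph{Completeness.} Let $(f_k)_k$ be Cauchy in $T^{p,q,\ga}(X)$. Applying the second inequality in \eqref{cptesteq} to $f_j - f_k$ on $K_n$ shows that $(f_k|_{K_n})_k$ is Cauchy in $L^q(K_n)$ for every $n$, hence converges to some $g^{(n)} \in L^q(K_n)$; since restriction $L^q(K_m) \to L^q(K_n)$ is continuous for $m > n$, uniqueness of $L^q$-limits forces $g^{(m)}|_{K_n} = g^{(n)}$ a.e., so the $g^{(n)}$ glue to a well-defined measurable function $f$ on $X^+$ with $\mb{1}_{K_n} f = g^{(n)} \in L^q(X^+)$ for all $n$. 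Extracting on each $K_n$ a further subsequence converging a.e.\ and diagonalising, we obtain a subsequence $(f_{j_l})_l$ with $f_{j_l} \to f$ a.e.\ on $X^+$. Now fix $\varepsilon > 0$, choose $N$ with $\norm{f_j - f_k}_{T^{p,q,\ga}(X)} \leq \varepsilon$ for $j,k \geq N$, and fix $k \geq N$. Since $|f - f_k|^q = \lim_l |f_{j_l} - f_k|^q$ a.e.\ on $X^+$, Fatou's lemma applied to the cone integral gives $\mc{A}_q^\ga(f - f_k)(x) \leq \liminf_l \mc{A}_q^\ga(f_{j_l} - f_k)(x)$ for every $x$, and then Fatou's lemma applied to the integral over $X$ gives $\norm{\mc{A}_q^\ga(f - f_k)}_{L^p(X)} \leq \liminf_l \norm{f_{j_l} - f_k}_{T^{p,q,\ga}(X)} \leq \varepsilon$. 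Hence $f - f_k \in T^{p,q,\ga}(X)$, so $f = f_k + (f - f_k) \in T^{p,q,\ga}(X)$ and $f_k \to f$ in $T^{p,q,\ga}(X)$.

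I expect the only delicate point to be the bookkeeping when $p$ or $q$ is less than $1$, where $\norm{\cdot}_{T^{p,q,\ga}(X)}$ is merely a quasi-norm: this is harmless because every limiting step above is performed on genuine integrals (via monotone/dominated convergence and Fatou, to which the values of $p,q \in (0,\infty)$ are irrelevant) rather than directly on the quasi-norms. The one substantive ingredient is Lemma~\ref{cptest}, and it is precisely this use that forces the hypothesis ``$X$ proper or doubling'' in the statement.
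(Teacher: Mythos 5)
Your proof is correct, and its skeleton is the same as the paper's: both arguments hinge entirely on Lemma~\ref{cptest} to transfer Cauchyness to $L^q$ on an exhausting sequence of cylinders $K_n$, glue the local limits into a candidate $f$, and prove density of $L^q_c(X^+)$ by dominated convergence applied first on cones and then in $L^p(X)$. Where you genuinely diverge is in the last step of completeness. The paper shows $f\in T^{p,q,\ga}(X)$ via a uniform bound on $\norm{\mb{1}_{K_m}f}_{T^{p,q,\ga}(X)}$ and then proves $f_n\to f$ by a tail-splitting argument: it chooses $m$ so that $\norm{\mb{1}_{K_m^c}(f_N-f)}_{T^{p,q,\ga}(X)}<\ge/2$ (dominated convergence) and controls $\norm{\mb{1}_{K_m^c}(f_n-f)}$ for $n\ge N$ by Cauchyness, leaving only the $L^q(K_m)$ piece. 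You instead extract a diagonal subsequence converging a.e.\ on $X^+$ and apply Fatou twice --- on the cone integral and then on the integral over $X$ --- to get $\norm{f-f_k}_{T^{p,q,\ga}(X)}\le\liminf_l\norm{f_{j_l}-f_k}_{T^{p,q,\ga}(X)}\le\ge$ directly. Your route is slightly shorter and handles membership of $f$ in the space and the convergence $f_k\to f$ in one stroke; it relies on the (harmless) observation that the cone measure $\frac{d\gm(y)}{V(y,\ga t)}\frac{dt}{t}$ is absolutely continuous with respect to the product measure on $X^+$, so the a.e.\ convergence transfers to each cone. The paper's route avoids passing to subsequences but needs the extra tail estimate. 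Both are valid, and your remark that all limiting steps act on genuine integrals (so the quasi-norm issue for $p,q<1$ never arises) is exactly the right point to flag.
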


\begin{proof}
	Let $(f_n)_{n \in \NN}$ be a Cauchy sequence in $T^{p,q,\ga}(X)$.
	Then by Lemma \ref{cptest}, for every cylindrical subset $K \subset X^+$ the sequence $(\mb{1}_K f_n)_{n \in \NN}$ is Cauchy in $L^q(K)$.
	We thus obtain a limit
	\begin{equation*}
		f_K := \lim_{n \to \infty} \mb{1}_K f_n \in L^q(K)
	\end{equation*}
	for each $K$.
	If $K_1$ and $K_2$ are two cylindrical subsets of $X^+$, then $f_{K_1}|_{K_1 \cap K_2} = f_{K_2}|_{K_1 \cap K_2}$, so by making use of an increasing sequence $\{K_m\}_{m \in \NN}$ of cylindrical subsets of $X^+$ whose union is $X^+$ (for example, we could take $K_m := B(x,m) \times (1/m,m)$ for some $x \in X$) we obtain a function $f \in L_\text{loc}^q(X^+)$ with $f|_{K_m} = f_{K_m}$ for each $m \in \NN$.\footnote{We interpret `locally integrable on $X^+$' as meaning `integrable on all cylinders', rather than `integrable on all compact sets'.}
	This is our candidate limit for the sequence $(f_n)_{n \in \NN}$.
	
	To see that $f$ lies in $T^{p,q,\ga}(X)$, write for any $m,n \in \NN$
	\begin{align*}
		\norm{\mb{1}_{K_m}f}_{T^{p,q,\ga}(X)} &\lesssim_{p,q} \norm{\mb{1}_{K_m}(f-f_n)}_{T^{p,q,\ga}(X)} + \norm{\mb{1}_{K_m} f_n}_{T^{p,q,\ga}(X)} \\
		&\leq C_{p,q,\ga,X,m}\norm{f-f_n}_{L^q(K_m)} + \norm{f_n}_{T^{p,q,\ga}(X)},
	\end{align*}
	the $(p,q)$-dependence in the first estimate being relevant only for $p < 1$ or $q < 1$, and the second estimate coming from Lemma \ref{cptest}.
	Since the sequence $(f_n)_{n \in \NN}$ converges to $\mb{1}_{K_m} f$ in $L^q(K_m)$ and is Cauchy in $T^{p,q,\ga}(X)$, we have that
	\begin{equation*}
		\norm{\mb{1}_{K_m} f}_{T^{p,q,\ga}(X)} \lesssim \sup_{n \in \NN} \norm{f_n}_{T^{p,q,\ga}(X)}
	\end{equation*}
	uniformly in $m$.
	Hence $\norm{f}_{T^{p,q,\ga}(X)}$ is finite.
	
	We now claim that for all $\ge > 0$ there exists $m \in \NN$ such that for all sufficiently large $n \in \NN$, we have
	\begin{equation*}
		\norm{\mb{1}_{K_m^c}(f_n-f)}_{T^{p,q,\ga}(X)} \leq \ge.
	\end{equation*}
	Indeed, since the sequence $(f_n)_{n \in \NN}$ is Cauchy in $T^{p,q,\ga}(X)$, there exists $N \in \NN$ such that for all $n,n^\prime \geq N$ we have $\norm{f_n - f_{n^\prime}}_{T^{p,q,\ga}(X)} < \ge/2$.
	Furthermore, since
	\begin{equation*}
		\lim_{m \to \infty} \norm{\mb{1}_{K_m^c}(f_N - f)}_{T^{p,q,\ga}(X)} = 0
	\end{equation*}
	by the Dominated Convergence Theorem, we can choose $m$ such that
	\begin{equation*}
		\norm{\mb{1}_{K_m^c}(f_N - f)}_{T^{p,q,\ga}(X)} < \ge/2.
	\end{equation*}
	Then for all $n \geq N$,
	\begin{align*}
		\norm{\mb{1}_{K_m^c}(f_n - f)}_{T^{p,q,\ga}(X)}
		&\lesssim_{p,q} \norm{\mb{1}_{K_m^c}(f_n - f_N)}_{T^{p,q,\ga}(X)} + \norm{\mb{1}_{K_m^c}(f_N - f)}_{T^{p,q,\ga}(X)} \\
		&\leq \norm{f_n - f_N}_{T^{p,q,\ga}(X)} + \norm{\mb{1}_{K_m^c}(f_N - f)}_{T^{p,q,\ga}(X)} \\
		&< \ge,
	\end{align*}
	proving the claim.
	
	Finally, by the previous remark, for all $\ge > 0$ we can find $m$ such that for all sufficiently large $n \in \NN$ we have
	\begin{align*}
		\norm{f_n - f}_{T^{p,q,\ga}(X)} &\lesssim_{p,q} \norm{\mb{1}_{K_m}(f_n - f)}_{T^{p,q,\ga}(X)} + \norm{\mb{1}_{K_m^c} (f_n - f)}_{T^{p,q,\ga}(X)} \\
		&< \norm{\mb{1}_{K_m}(f_n - f)}_{T^{p,q,\ga}(X)} + \ge \\
		&\leq C(p,q,\ga,X,m)\norm{f_n-f}_{L^q(K_m)} + \ge.
	\end{align*}
	Taking the limit of both sides as $n \to \infty$, we find that $\lim_{n \to \infty} f_n = f$ in $T^{p,q,\ga}(X)$, and therefore $T^{p,q,\ga}(X)$ is complete.
	
	To see that $L_c^q(X^+)$ is dense in $T^{p,q,\ga}(X)$, simply write $f \in T^{p,q,\ga}(X)$ as the pointwise limit
	\begin{equation*}
		f = \lim_{n \to \infty} \mb{1}_{K_n} f.
	\end{equation*}
	By the Dominated Convergence Theorem, this convergence holds in $T^{p,q,\ga}(X)$.	
\end{proof}

We note that Lemma \ref{avgtrick} implies that in the case where $p=q$, we have $T^{p,p,\ga}(X) = L^p(X^+)$ for all $\ga > 0$.


In the same way as Lemma \ref{avgtrick}, we can prove the analogue of \cite[Lemma 1]{CMS85}.

\begin{lem}[First integration lemma]\label{il1}
	For any nonnegative measurable function $\gF$ on $X^+$, with $F$ a measurable subset of $X$ and $\ga > 0$,
	\begin{equation*}
		\int_F \iint_{\gG^\ga(x)} \gF(y,t) \, d\gm(y) \, dt \, d\gm(x) \leq \iint_{\gG^\ga(F)} \gF(y,t) V(y,\ga t) \, d\gm(y) \, dt.
	\end{equation*}
\end{lem}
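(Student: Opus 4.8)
The plan is to follow the proof of the averaging trick (Lemma~\ref{avgtrick}) almost verbatim, the only differences being that no division by $V(y,\ga t)$ occurs — so a trivial bound replaces an exact cancellation — and that the spatial integral is over $F$ rather than all of $X$, which is what forces the right-hand side to involve $\gG^\ga(F)$ rather than $X^+$. Concretely, I would first unfold the cone integral as
\[
\int_F \iint_{\gG^\ga(x)} \gF(y,t)\,d\gm(y)\,dt\,d\gm(x) = \int_F \int_0^\infty \int_X \mb{1}_{B(x,\ga t)}(y)\,\gF(y,t)\,d\gm(y)\,dt\,d\gm(x),
\]
and then apply Fubini--Tonelli. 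This is justified because the integrand is nonnegative and the map $(x,y,t)\mapsto\mb{1}_{B(x,\ga t)}(y)$ is jointly measurable on $X\times X^+$ (from the continuity of $d$ and the fact that the Borel $\gs$-algebra on $X^+$ is the product of the Borel $\gs$-algebras). Using the symmetry $\mb{1}_{B(x,\ga t)}(y)=\mb{1}_{B(y,\ga t)}(x)$, this yields
\[
\int_0^\infty \int_X \gm\big(F\cap B(y,\ga t)\big)\,\gF(y,t)\,d\gm(y)\,dt.
\]

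To conclude, I would make two elementary observations about the factor $\gm(F\cap B(y,\ga t))$. First, it is bounded above by $\gm(B(y,\ga t)) = V(y,\ga t)$. Second, it vanishes whenever $F\cap B(y,\ga t)=\varnothing$, and the condition $F\cap B(y,\ga t)\neq\varnothing$ is precisely the statement that $(y,t)\in\gG^\ga(F)$; thus the integrand above is supported on the measurable set $G:=\{(y,t)\in X^+:\gm(F\cap B(y,\ga t))>0\}$, which is contained in $\gG^\ga(F)$. Replacing the factor by $V(y,\ga t)$ and the domain of integration by $\gG^\ga(F)$ can then only increase the integral, giving the desired bound
\[
\iint_{\gG^\ga(F)}\gF(y,t)\,V(y,\ga t)\,d\gm(y)\,dt.
\]

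I do not anticipate a genuine obstacle: the whole argument is the Fubini--Tonelli interchange together with the bound $\gm(F\cap B(y,\ga t))\le V(y,\ga t)$. The one bookkeeping subtlety — already signalled in the discussion of cones over non-closed sets — is that $\gG^\ga(F)$ itself may fail to be measurable when $F$ is merely measurable; but the function $(y,t)\mapsto\gm(F\cap B(y,\ga t))$ is measurable (by Fubini--Tonelli applied to $(x,y,t)\mapsto\mb{1}_F(x)\,\mb{1}_{B(y,\ga t)}(x)$), its support $G$ is a measurable subset of $\gG^\ga(F)$, and the final inequality is read as comparing the genuine integral over $G$ with the (upper) integral of $\gF(y,t)V(y,\ga t)$ over $\gG^\ga(F)$.
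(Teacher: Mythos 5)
Your proof is correct and follows exactly the route the paper intends: the paper gives no separate argument for Lemma \ref{il1}, stating only that it is proved ``in the same way as Lemma \ref{avgtrick}'', i.e.\ by the Fubini--Tonelli interchange and the symmetry $\mb{1}_{B(x,\ga t)}(y)=\mb{1}_{B(y,\ga t)}(x)$, after which the bound $\gm(F\cap B(y,\ga t))\le V(y,\ga t)$ and the support observation give the inequality. Your extra care about the possible non-measurability of $\gG^\ga(F)$ for merely measurable $F$ is a sensible refinement that the paper leaves implicit.
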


\begin{rmk}\label{obvcoa}
	There is one clear disadvantage of our choice of tent space norm: it is no longer clear that
	\begin{equation}\label{trivialCoA}
		\norm{\cdot}_{T^{p,q,\ga}(X)} \leq \norm{\cdot}_{T^{p,q,\gb}(X)}
	\end{equation}
	when $\ga < \gb$.
	In fact, this may not even be true for general nondoubling spaces.
	This is no great loss, since for doubling spaces we can revert to the `original' tent space norm (with $\mb{a} = x$ and $\mb{b} = 1$) at the cost of a constant depending only on $X$, and for this choice of norm \eqref{trivialCoA} is immediate.
\end{rmk}

In order to define the tent spaces $T^{\infty,q,\ga}(X)$, we need to introduce the operator $\mc{C}_q^\ga$.
For measurable functions $f$ on $X^+$, we define
\begin{equation*}
	\mc{C}_q^\ga(f)(x) := \sup_{B \ni x} \left( \frac{1}{\gm(B)} \iint_{T^\ga(B)} |f(y,t)|^q \, d\gm(y) \, \frac{dt}{t} \right)^{\frac{1}{q}},
\end{equation*}	
where the supremum is taken over all balls containing $x$.
Since $\mc{C}_q^\ga(f)$ is lower semicontinuous (see Lemma \ref{inftymeas}), $\mc{C}_q^\ga(f)$ is measurable.
We define the (quasi-)norm $\norm{\cdot}_{T^{\infty,q,\ga}(X)}$ for functions $f$ on $X^+$ by
\begin{equation*}
	\norm{f}_{T^{\infty,q,\ga}(X)} := \norm{\mc{C}_q^\ga(f)}_{L^\infty(X)},
\end{equation*}
and the tent space $T^{\infty,q,\ga}(X)$ as the (quasi-)normed vector space of measurable functions $f$ on $X^+$, defined almost everywhere, for which $\norm{f}_{T^{\infty,q,\ga}(X)}$ is finite.
The proof that $T^{\infty,q,\ga}(X)$ is a (quasi-)Banach space is similar to that of Proposition \ref{completenessetc} once we have established the following analogue of Lemma \ref{cptest}.

\begin{lem}\label{inftyest}
	Let $q,\ga > 0$, let $K \subset X^+$ be cylindrical, and suppose $f$ is a measurable function on $X^+$.
	Then
	\begin{equation}\label{carlest}
		\norm{f}_{L^q(K)} \lesssim \norm{f}_{T^{\infty,q,\ga}(X)},
	\end{equation}
	with implicit constant depending only on $\ga$, $q$, and $K$ (but not otherwise on $X$).
	
	Furthermore, if $X$ is proper or doubling, then we also have
	\begin{equation*}
		\norm{\mb{1}_K f}_{T^{\infty,q,\ga}(X)} \lesssim \norm{f}_{L^q(K)},
	\end{equation*}
	again with implicit constant depending only on $\ga$, $q$, and $K$.
\end{lem}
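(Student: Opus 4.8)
The plan is to treat the two estimates separately: the first holds with no geometric hypotheses, and the second will reuse the volume bounds established at the opening of the proof of Lemma~\ref{cptest}.

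For \eqref{carlest}, I would fix a cylinder $B(x_0,r) \times (a,b) =: C \supset K$ and set $B := B(x_0, r + \ga b)$. The first step is the geometric observation that $C \subset T^\ga(B)$: for $(y,t) \in C$ one has $\dist(y,B^c) \geq (r + \ga b) - d(x_0,y) > \ga b > \ga t$, so $F_B(y,t) > \ga$. Since $B$ is a ball containing each of its points and $\gm(B) > 0$, the definition of $\mc{C}_q^\ga$ then gives, for a.e.\ $x \in B$,
\begin{equation*}
  \norm{f}_{T^{\infty,q,\ga}(X)}^q \geq \mc{C}_q^\ga(f)(x)^q \geq \frac{1}{\gm(B)} \iint_{T^\ga(B)} |f(y,t)|^q \, d\gm(y) \, \frac{dt}{t} \geq \frac{1}{b\,\gm(B)} \iint_K |f(y,t)|^q \, d\gm(y) \, dt,
\end{equation*}
the last inequality using $K \subset C$ and $t < b$ there. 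Rearranging yields \eqref{carlest} with implicit constant $(b\,\gm(B(x_0,r+\ga b)))^{1/q}$, which is determined by $\ga$ and the cylinder enclosing $K$ alone, with no global structural information about $X$ entering — this is the point of the parenthetical remark in the statement.

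For the second estimate I would first observe that
\begin{equation*}
  \norm{\mb{1}_K f}_{T^{\infty,q,\ga}(X)}^q \leq \sup_B \frac{1}{\gm(B)} \iint_{T^\ga(B) \cap K} |f(y,t)|^q \, d\gm(y) \, \frac{dt}{t},
\end{equation*}
the supremum running over all balls $B \subset X$ (dropping the requirement $B \ni x$ only enlarges it). Fix such a $B$ for which the integral is nonzero, so that $T^\ga(B) \cap K$ has positive measure, and pick $(y,t)$ in it. Then $y \in B(x_0,r)$, $t > a$, and $\dist(y,B^c) > \ga t > \ga a$, whence $B(y,\ga a) \subset B$ and so $\gm(B) \geq V(y,\ga a) \geq \min_{z \in B(x_0,r)} V(z,\ga a)$. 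The crucial input is that this minimum is bounded below by a positive constant $c_0 = c_0(\ga,K,X)$; this is precisely \eqref{infimum} from the proof of Lemma~\ref{betas}, as already invoked inside the proof of Lemma~\ref{cptest}, and it is here (and only here) that properness or doubling is used. Combining this with $1/t \leq 1/a$ on $K$, every term in the supremum is at most $(a c_0)^{-1} \norm{f}_{L^q(K)}^q$, which is the desired bound.

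I do not anticipate a serious obstacle; there are only two things to get right. The first is the bookkeeping between $dt$ and $dt/t$, which is harmless because $t$ is confined to the fixed bounded interval $(a,b)$ on $K$. The second — the only place any content enters — is the uniform lower bound on $V(z,\ga a)$ for $z \in B(x_0,r)$, which is imported wholesale from Lemma~\ref{cptest} (via Lemma~\ref{betas}); this is exactly why the second estimate, unlike the first, needs $X$ proper or doubling and carries a constant that may depend on $X$.
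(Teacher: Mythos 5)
Your proof is correct and follows essentially the same route as the paper's: the first estimate comes from exhibiting a ball whose tent contains $K$ (the paper phrases this via $\gb_1(K)$ from Lemma \ref{betas} and a near-optimal ball $B_\ge$, while you construct $B(x_0,r+\ga b)$ explicitly), and the second from a uniform positive lower bound on $\gm(B)$ over balls $B$ with $T^\ga(B)\cap K \neq \varnothing$ (the paper's $\gb_0(K)>0$, which you re-derive inline via the containment $B(y,\ga a)\subset B$ and \eqref{infimum}). The $dt$ versus $dt/t$ normalisation and the essential-supremum point ($\gm(B)>0$, so some $x\in B$ realises the bound) are both handled correctly.
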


\begin{proof}
	We use Lemma \ref{betas}.
	To prove the first estimate, for each $\ge > 0$ we can choose a ball $B_\ge$ such that $T^\ga(B_\ge) \supset K$ and $\gm(B_\ge) < \gb_1(K) + \ge$.
	Then
	\begin{align*}
		\norm{f}_{L^q(K)} &\leq \norm{\mb{1}_{T^\ga(B_\ge)} f}_{L^q(X^+)} \\
		&= \gm(B_\ge)^{\frac{1}{q}} \gm(B_\ge)^{-\frac{1}{q}} \norm{\mb{1}_{T^\ga(B_\ge)} f}_{L^q(X^+)} \\
		&\leq (\gb_1(K) + \ge)^{\frac{!}{q}} \norm{f}_{T^{\infty,q,\ga}(X)}.
	\end{align*}
	In the final line we used that $\gm(B_\ge) > 0$ to conclude that $\gm(B_\ge)^{-1/q}\norm{\mb{1}_{T^\ga(B_\ge)}f}_{L^q(X^+)}$ is less than the \emph{essential} supremum of $\mc{C}_q^\ga(f)$.
	Since $\ge > 0$ was arbitrary, we have the first estimate.
	
	For the second estimate, assuming that $X$ is proper or doubling, observe that
	\begin{align*}\
		\norm{\mb{1}_K f}_{T^{\infty,q,\ga}(X)} &\leq \sup_{B \subset X} \left( \frac{1}{\gm(B)} \iint_{T^{\ga}(B) \cap K} |f(y,t)|^q \, d\gm(y) \, \frac{dt}{t} \right)^{\frac{1}{q}} \\
		&\leq \left(\frac{1}{\gb_0(K)} \iint_{K} |f(y,t)|^q \, d\gm(y) \frac{dt}{t}\right)^{\frac{1}{q}} \\
		&= \gb_0(K)^{-\frac{1}{q}} \norm{f}_{L^q(K)},
	\end{align*}
	completing the proof.
\end{proof}

\begin{rmk}
	In this section we did not impose any geometric conditions on our space $X$ besides our standing assumptions on the measure $\gm$ and the properness assumption (in the absence of doubling).
	Thus we have defined the tent space $T^{p,q,\ga}(X)$ in considerable generality.
	However, what we have defined is a \emph{global} tent space, and so this concept may not be inherently useful when $X$ is non-doubling.
	Instead, our interest is to determine precisely where geometric assumptions are needed in the tent space theory.
\end{rmk}

\subsection{Duality, the vector-valued approach, and complex interpolation}\label{dvvi}

\subsubsection{Midpoint results}

The geometric assumption (HL) from Section \ref{assumptions} now comes into play.
For $r > 0$, we denote the H\"older conjugate of $r$ by $r^\prime := r/(r-1)$.

\begin{prop}\label{duality1}
	Suppose that $X$ is either proper or doubling, and satisfies assumption (HL).
	Then for $p,q \in (1,\infty)$ and $\ga > 0$, the pairing
	\begin{equation*}
		\langle f,g \rangle := \iint_{X^+} f(y,t) \overline{g(y,t)} \, d\gm(y) \, \frac{dt}{t} \qquad (f \in T^{p,q,\ga}(X), g \in T^{p^\prime,q^\prime,\ga}(X))
	\end{equation*}
	realises $T^{p^\prime,q^\prime,\ga}(X)$ as the Banach space dual of $T^{p,q,\ga}(X)$, up to equivalence of norms.
\end{prop}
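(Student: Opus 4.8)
I would prove the two containments separately. The \emph{easy containment} --- that each $g \in T^{p',q',\ga}(X)$ induces a bounded functional $f \mapsto \langle f,g\rangle$ on $T^{p,q,\ga}(X)$ of norm at most $\norm{g}_{T^{p',q',\ga}(X)}$, so that in particular the pairing is well defined --- holds for all $p,q \in (1,\infty)$ and needs only the averaging trick. Beginning with $|\langle f,g\rangle| \le \iint_{X^+}|f||g|\,d\gm\,dt/t$, apply Lemma \ref{avgtrick} to $\gF = |f||g|$, then H\"older's inequality with exponents $q,q'$ on each cone $\gG^\ga(x)$ (with respect to $\frac{d\gm(y)}{V(y,\ga t)}\frac{dt}{t}$), and finally H\"older's inequality with exponents $p,p'$ on $X$:
\[
	|\langle f,g\rangle| \;\le\; \int_X \mc{A}_q^\ga(f)(x)\,\mc{A}_{q'}^\ga(g)(x)\,d\gm(x) \;\le\; \norm{f}_{T^{p,q,\ga}(X)}\,\norm{g}_{T^{p',q',\ga}(X)}.
\]
It then remains to show that every $\ell \in (T^{p,q,\ga}(X))^*$ has this form, with $\norm{g}_{T^{p',q',\ga}(X)} \lesssim \norm{\ell}$.

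To produce $g$, I would first use the left-hand estimate of Lemma \ref{cptest}: it says $L^q(K) \hookrightarrow T^{p,q,\ga}(X)$ boundedly for every cylindrical $K$, so $\ell|_{L^q(K)}$ is bounded and --- since $q \in (1,\infty)$ and $K$ has finite measure --- is represented by some $g_K \in L^{q'}(K)$. These agree on overlaps, so exhausting $X^+$ by cylinders (as in Proposition \ref{completenessetc}) yields $g \in L_\text{loc}^{q'}(X^+)$ with $\langle f,g\rangle = \ell(f)$ for all $f$ in the dense subspace $L_c^q(X^+)$; it therefore suffices to bound $\norm{g}_{T^{p',q',\ga}(X)}$. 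Replacing $g$ by $g\mb{1}_{K_n \cap \{|g|\le n\}}$ and letting $n \to \infty$ at the end (monotone convergence applies to $\mc{A}_{q'}^\ga$), I may assume $g$ is bounded with cylindrical support; then the argument of Lemma \ref{cptest} shows $\gF := \mc{A}_{q'}^\ga(g)$ is bounded with support of finite measure, so $\gF \in L^{p'}(X)$ and every integral below converges absolutely.

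The crux is the choice of test function, and I would treat the range $p \le q$ directly. Put
\[
	G(y,t) := \frac{1}{V(y,\ga t)}\int_{B(y,\ga t)} \gF(x)^{p'-q'}\,d\gm(x), \qquad f(y,t) := \overline{g(y,t)}\,|g(y,t)|^{q'-2}\,G(y,t),
\]
which is meaningful because $p'-q' \ge 0$; one checks (using Lemma \ref{cptest} again) that $f \in L_c^q(X^+)$, so $\ell(f) = \langle f,g\rangle$. A Fubini computation of the shape of Lemma \ref{avgtrick}, using the identity $(q'-1)q = q'$, gives
\[
	\langle f,g\rangle = \iint_{X^+}|g|^{q'}G\,d\gm\,\frac{dt}{t} = \int_X \gF(x)^{p'-q'}\gF(x)^{q'}\,d\gm(x) = \norm{\gF}_{L^{p'}(X)}^{p'}.
\]
For the norm of $f$, observe that when $(y,t) \in \gG^\ga(x)$ the ball $B(y,\ga t)$ contains $x$, so $G(y,t) \le \mc{M}(\gF^{p'-q'})(x)$; hence $\mc{A}_q^\ga(f)(x)^q \le \mc{M}(\gF^{p'-q'})(x)^q\,\gF(x)^{q'}$, i.e. $\mc{A}_q^\ga(f)(x) \le \mc{M}(\gF^{p'-q'})(x)\,\gF(x)^{q'-1}$. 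H\"older's inequality on $X$ with exponents $\frac{p'}{p'-q'}$ and $\frac{p'}{q'-1}$, together with assumption (HL) in the form $\norm{\mc{M}(\gF^{p'-q'})}_{L^{p'/(p'-q')}(X)} \lesssim \norm{\gF}_{L^{p'}(X)}^{p'-q'}$, then gives $\norm{f}_{T^{p,q,\ga}(X)} \lesssim \norm{\gF}_{L^{p'}(X)}^{p'-1}$. Combining, $\norm{\gF}_{L^{p'}(X)}^{p'} = \ell(f) \le \norm{\ell}\,\norm{f}_{T^{p,q,\ga}(X)} \lesssim \norm{\ell}\,\norm{\gF}_{L^{p'}(X)}^{p'-1}$, so $\norm{g}_{T^{p',q',\ga}(X)} = \norm{\gF}_{L^{p'}(X)} \lesssim \norm{\ell}$. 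In the remaining range $p > q$ this test function is unavailable (now $p'-q' < 0$), so there I would argue by reflexivity: the map $f \mapsto \big(x \mapsto \mb{1}_{\gG^\ga(x)}f\big)$ embeds $T^{p,q,\ga}(X)$ isometrically, and with closed image by Proposition \ref{completenessetc}, into the reflexive space $L^p\big(X;L^q(X^+,\tfrac{d\gm(y)}{V(y,\ga t)}\tfrac{dt}{t})\big)$, so $T^{p,q,\ga}(X)$ is reflexive for \emph{all} $p,q \in (1,\infty)$; applying the case $p \le q$ to the exponents $(p',q')$ (which satisfy $p' < q'$) identifies $T^{p,q,\ga}(X)$ with $(T^{p',q',\ga}(X))^*$ via the pairing, and taking one further dual --- using reflexivity of $T^{p',q',\ga}(X)$ --- yields $(T^{p,q,\ga}(X))^* = T^{p',q',\ga}(X)$.

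I expect the main obstacle to be the test-function estimate for $p \le q$: arranging the exponents so that the bounds on $\langle f,g\rangle$ and on $\norm{f}_{T^{p,q,\ga}(X)}$ combine, and in particular the passage $G(y,t) \le \mc{M}(\gF^{p'-q'})(x)$ followed by the $L^{p'/(p'-q')}$-boundedness of $\mc{M}$. This is exactly where (HL) is invoked, and since $p'/(p'-q')$ can be any exponent in $(1,\infty)$ as $p < q$ varies, the full strength of (HL) is needed (the borderline $p = q$ reducing to $T^{p,p,\ga}(X) = L^p(X^+)$, with no maximal operator required). The secondary technical point is the reduction to bounded, cylindrically supported $g$, needed to make $\gF \in L^{p'}(X)$ and all integrals finite; it is here --- through Lemma \ref{cptest} --- that the ``proper or doubling'' hypothesis enters, while measurability of $G$ and of $\mc{A}_q^\ga(f)$ is routine, as in the Appendix.
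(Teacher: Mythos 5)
Your proposal is correct, and for most of the argument it coincides with the paper's proof: the easy containment via the averaging trick and two applications of H\"older, the construction of $g$ from $\ell$ by Riesz representation on $L^q(K)$ over an exhaustion by cylinders, and the reduction of the hard containment to a uniform bound on truncations of $g$. Your test-function argument for $p<q$ is the ``extremal'' form of the paper's: the paper estimates $\norm{\mc{A}_{q'}^\ga(g_K)^{q'}}_{L^{p'/q'}(X)}$ by pairing against an arbitrary $\gy\in L^{(p'/q')'}(X)$ and building $f_\gy$ from $M_{\ga t}\gy$, whereas you simply take $\gy=\gF^{p'-q'}$ with $\gF=\mc{A}_{q'}^\ga(g)$; since $(p'/q')'=p'/(p'-q')$, both invoke (HL) at exactly the same exponent, and your version trades the supremum over $\gy$ for the a priori finiteness of $\norm{\gF}_{L^{p'}}$, which your truncation handles. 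The one genuinely different component is the reflexivity step for $p>q$: the paper verifies reflexivity of $T^{p',q',\ga}(X)$ by hand via Eberlein--\u{S}mulian, extracting a weakly convergent subsequence in each $L^{q'}(K)$ and upgrading it using the cylindrical approximation of the representing function $g$; you instead realise $T^{p,q,\ga}(X)$ as a closed subspace of the reflexive space $L^p(X;L_\ga^q(X^+))$ via the isometry $T_\ga$. Your route is shorter, but it imports the strong measurability of $x\mapsto \mb{1}_{\gG^\ga(x)}f$ (Pettis plus Lemma \ref{measurability}, which is independent of this proposition, so there is no circularity) and the standard but nontrivial fact that $L^p(\mu;B)$ is reflexive for reflexive $B$, $1<p<\infty$ and $\gs$-finite $\mu$ (via the Radon--Nikod\'ym property); the paper's argument avoids all vector-valued duality theory. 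Both then conclude identically: $(T^{p',q',\ga})^*=T^{p,q,\ga}$ from the case already treated, plus reflexivity, gives $(T^{p,q,\ga})^*=T^{p',q',\ga}$ via the pairing.

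One trivial slip: with the pairing $\langle f,g\rangle=\iint f\overline{g}$, your test function should be $f=g\,|g|^{q'-2}\,G$ rather than $\overline{g}\,|g|^{q'-2}\,G$, so that $f\overline{g}=|g|^{q'}G$; this does not affect anything else.
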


This is proved in the same way as in \cite{CMS85}.
We provide the details in the interest of self-containment.

\begin{proof}
	We first remark that if $p = q$, the duality statement is a trivial consequence of the equality $T^{p,p,\ga}(X) = L^p(X^+)$.
	
	In general, suppose $f \in T^{p,q,\ga}(X)$ and $g \in T^{p^\prime,q^\prime,\ga}(X)$.
	Then by the averaging trick and H\"older's inequality, we have
	\begin{align}\label{di}
		|\langle f,g \rangle|
		&\leq \int_X \iint_{\gG^\ga(x)} |f(y,t)\overline{g(y,t)}| \, \frac{d\gm(y)}{V(y,\ga t)} \, \frac{dt}{t} \, d\gm(x) \notag\\
		&\leq \int_X \mc{A}_q^\ga(f)(x) \mc{A}_{q^\prime}^\ga(g)(x) \, d\gm(x) \notag\\
		&\leq \norm{f}_{T^{p,q,\ga}(X)} \norm{g}_{T^{p^\prime,q^\prime,\ga}(X)}.
	\end{align}
	Thus every $g \in T^{p^\prime,q^\prime,\ga}(X)$ induces a bounded linear functional on $T^{p,q,\ga}(X)$ via the pairing $\langle \cdot,\cdot \rangle$, and so $T^{p^\prime,q^\prime,\ga}(X) \subset (T^{p,q,\ga}(X))^*$.
	
	Conversely, suppose $\ell \in (T^{p,q,\ga}(X))^*$.
	If $K \subset X^+$ is cylindrical, then by the properness or doubling assumption, we can invoke Lemma \ref{cptest} to show that $\ell$ induces a bounded linear functional $\ell_K \in (L^q(K))^*$, which can in turn be identified with a function $g_K \in L^{q^\prime}(K)$.
	By covering $X^+$ with an increasing sequence of cylindrical subsets, we thus obtain a function $g \in L_\text{loc}^{q^\prime}(X^+)$ such that $g|_K = g_K$ for all cylindrical $K \subset X^+$.
	
	If $f \in L^q(X^+)$ is cylindrically supported, then we have
	\begin{equation}\label{dualrepn}
		\iint_{X^+} f(y,t)\overline{g(y,t)} \, d\gm(y) \, \frac{dt}{t} = \iint_{\supp f} f(y,t) \overline{g_{\supp f}(y,t)} \, d\gm(y) \, \frac{dt}{t} = \ell_{\supp f}(f) = \ell(f),
	\end{equation}
	recalling that $f \in T^{p,q,\ga}(X)$ by Lemma \ref{cptest}.
	Since the cylindrically supported $L^q(X^+)$ functions are dense in $T^{p,q,\ga}(X)$, the representation \eqref{dualrepn} of $\ell(f)$ in terms of $g$ is valid for all $f \in T^{p,q,\ga}(X)$ by dominated convergence and the inequality \eqref{di}, provided we show that $g$ is in $T^{p^\prime,q^\prime,\ga}(X)$.
	
	Now suppose $p < q$.
	We will show that $g$ lies in $T^{p^\prime,q^\prime,\ga}(X)$, thus showing directly that $(T^{p,q,\ga}(X))^*$ is contained in $T^{p^\prime,q^\prime,\ga}(X)$.
	It suffices to show this for $g_K$, where $K \subset X^+$ is an arbitrary cylindrical subset, provided we obtain an estimate which is uniform in $K$.
	We estimate
	\begin{equation*}
		\norm{g_K}_{T^{p^\prime,q^\prime,\ga}(X)}^{q^\prime} = \norm{\mc{A}_{q^\prime}^\ga(g_K)^{q^\prime}}_{L^{p^\prime/q^\prime}(X)}
	\end{equation*}
	by duality.
	Let $\gy \in L^{(p^\prime/q^\prime)^\prime}(X)$ be nonnegative, with $\norm{\gy}_{L^{(p^\prime/q^\prime)^\prime}(X)} \leq 1$.
	Then by Fubini--Tonelli's theorem,
	\begin{align*}
		\int_X \mc{A}_{q^\prime}^\ga(g_K)(x)^{q^\prime} \gy(x) \, d\gm(x)
		&= \int_X \iint_{X^+} \mb{1}_{B(y,\ga t)}(x) |g_K(y,t)|^{q^\prime} \, \frac{d\gm(y)}{V(y,\ga t)} \, \frac{dt}{t} \, \gy(x) \, d\gm(x) \\
		&= \int_0^\infty \int_X \frac{1}{V(y,\ga t)} \int_{B(y,\ga t)} \gy(x) \, d\gm(x) \, |g_K(y,t)|^{q^\prime} \, d\gm(y) \, \frac{dt}{t} \\
		&= \iint_{X^+} M_{\ga t} \gy(y) |g_K(y,t)|^{q^\prime} \, d\gm(y) \, \frac{dt}{t},
	\end{align*}
	where $M_s$ is the averaging operator defined for $y \in X$ and $s > 0$ by
	\begin{equation*}
		M_s \gy(y) := \frac{1}{V(y,s)} \int_{B(y,s)} \gy(x) \, d\gm(x).
	\end{equation*}
	Thus we can write formally
	\begin{equation}\label{dual}
		\int_X \mc{A}_{q^\prime}^\ga(g_K)(x)^{q^\prime} \gy(x) \, d\gm(x) = \langle f_\gy,g \rangle,
	\end{equation}
	where we define
	\begin{equation*}
		f_\gy(y,t) := \left\{ \begin{array}{ll} M_{\ga t}\gy(y) \overline{g_K(y,t)}^{q^\prime /2} g_K(y,t)^{(q^\prime/2)-1} & \text{when $g_K(y,t) \neq 0$,} \\ 0 & \text{when $g_K(y,t) = 0$,} \end{array} \right.
	\end{equation*}
        noting that $g_K(y,t)^{(q^\prime/2)-1}$ is not defined when $g_K(y,t) = 0$ and $q^\prime < 2$.
	However, the equality \eqref{dual} is not valid until we show that $f$ lies in $T^{p,q,\ga}(X)$.
	To this end, estimate
	\begin{align*}
		\mc{A}_q^\ga(f_\gy)
		&\leq \left( \iint_{\gG^\ga(x)} M_{\ga t}\gy(y)^q |g_K(y,t)|^{q(q^\prime-1)} \, \frac{d\gm(y)}{V(y,\ga t)} \, \frac{dt}{t} \right)^{\frac{1}{q}} \\
		&\leq \left( \iint_{\gG^\ga(x)} \mc{M}\gy(x)^q |g_K(y,t)|^{q^\prime} \, \frac{d\gm(y)}{V(y,\ga t)} \, \frac{dt}{t} \right)^{\frac{1}{q}} \\
		&= \mc{M}\gy(x) \mc{A}_{q^\prime}^\ga(g_K)(x)^{q^\prime/q}.
	\end{align*}
	Taking $r$ such that $1/p = 1/r + 1/(p^\prime/q^\prime)^\prime$ and using (HL), we then have
	\begin{align*}
		\norm{\mc{A}_q^\ga(f_\gy)}_{L^p(X)}
		&\leq \norm{(\mc{M}\gy)\mc{A}_{q^\prime}^\ga(g_K)^{q^\prime/q}}_{L^p(X)} \\
		&\leq \norm{\mc{M}\gy}_{L^{(p^\prime/q^\prime)^\prime}(X)} \norm{\mc{A}_{q^\prime}^\ga(g_K)^{q^\prime/q}}_{L^r(X)} \\
		&\lesssim_X \norm{\gy}_{L^{(p^\prime/q^\prime)^\prime}(X)} \norm{\mc{A}_{q^\prime}^\ga(g_K)}_{L^{rq^\prime/q}(X)}^{q^\prime/q} \\
		&\leq \norm{\mc{A}_{q^\prime}^\ga(g_K)}_{L^{rq^\prime/q}(X)}^{q^\prime/q}.
	\end{align*}
	One can show that $rq^\prime/q = p^\prime$, and so $f_\gy$ is in $T^{p,q,\ga}(X)$ by Lemma \ref{cptest}.
	By \eqref{dual}, taking the supremum over all $\gy$ under consideration, we can write
	\begin{align*}
		\norm{g_K}_{T^{p^\prime,q^\prime,\ga}(X)}^{q^\prime}
		&\leq \norm{\ell}\norm{f_\gy}_{T^{p,q,\ga}(X)} \\
		&\lesssim_X \norm{\ell} \norm{g_K}_{T^{p^\prime,q^\prime,\ga}(X)}^{q^\prime/q},
	\end{align*}
	and consequently, using that $\norm{g_K}_{T^{p^\prime,q^\prime,\ga}(X)} < \infty$,
	\begin{equation*}
		\norm{g_K}_{T^{p^\prime,q^\prime,\ga}(X)}\lesssim_X \norm{\ell}.
	\end{equation*}
	Since this estimate is independent of $K$, we have shown that $g \in T^{p^\prime,q^\prime,\ga}(X)$, and therefore that $(T^{p,q,\ga}(X))^*$ is contained in $T^{p^\prime,q^\prime,\ga}(X)$.
	This completes the proof when $p < q$.
	
	To prove the statement for $p > q$, it suffices to show that the tent space $T^{p^\prime,q^\prime,\ga}(X)$ is reflexive.
	Thanks to the Eberlein--\u{S}mulian theorem (see \cite[Corollary 1.6.4]{AC06}), this is equivalent to showing that every bounded sequence in $T^{p^\prime,q^\prime,\ga}(X)$ has a weakly convergent subsequence.
	
	Let $\{f_n\}_{n \in \NN}$ be a sequence in $T^{p^\prime,q^\prime,\ga}(X)$ with $\norm{f_n}_{T^{p^\prime,q^\prime,\ga}(X)} \leq 1$ for all $n \in \NN$.
	Then by Lemma \ref{cptest}, for all cylindrical $K \subset X^+$ the sequence $\{f_n\}_{n \in \NN}$ is bounded in $L^{q^\prime}(K)$, and so by reflexivity of $L^{q^\prime}(K)$ we can find a subsequence $\{f_{n_j}\}_{j \in \NN}$ which converges weakly in $L^{q^\prime}(K)$.
	We will show that this subsequence also converges weakly in $T^{p^\prime,q^\prime,\ga}(X)$.
	
	Let $\ell \in (T^{p^\prime,q^\prime,\ga}(X))^*$.
	Since $p^\prime < q^\prime$, we have already shown that there exists a function $g \in T^{p,q,\ga}(X)$ such that $\ell(f) = \langle f,g \rangle$.
	For every $\ge > 0$, we can find a cylindrical set $K_\ge \subset X^+$ such that
	\begin{equation*}
		\norm{g - \mb{1}_{K_\ge}g}_{T^{p,q,\ga}(X)} \leq \ge.
	\end{equation*}
	Thus for all $i,j \in \NN$ and for all $\ge > 0$ we have
	\begin{align*}
		\ell(f_{n_i}) - \ell(f_{n_j})
		&= \langle f_{n_i}-f_{n_j},\mb{1}_{K_\ge}g \rangle + \langle f_{n_i}-f_{n_j},g-\mb{1}_{K_\ge}g \rangle \\
		&\leq \langle f_{n_i}-f_{n_j},\mb{1}_{K_\ge}g \rangle + (\norm{f_{n_i}}_{T^{p^\prime,q^\prime,\ga}(X)} + \norm{f_{n_j}}_{T^{p^\prime,q^\prime,\ga}(X)})\norm{g-\mb{1}_{K_\ge}g}_{T^{p,q,\ga}} \\
		&\leq \langle f_{n_i}-f_{n_j},\mb{1}_{K_\ge}g \rangle + 2\ge.
	\end{align*}
	As $i,j \to \infty$, the first term on the right hand side above tends to $0$, and so we conclude that $\{f_{n_j}\}_{n \in \NN}$ converges weakly in $T^{p^\prime,q^\prime,\ga}(X)$.
	This completes the proof.
\end{proof}

\begin{rmk}
	As mentioned earlier, property (HL) is weaker than doubling, but this  is still a strong assumption.
	We note that for Proposition \ref{duality1} to hold for a given pair $(p,q)$, the uncentred Hardy--Littlewood maximal operator need only be of strong type $((p^\prime/q^\prime)^\prime,(p^\prime/q^\prime)^\prime)$.
	Since $(p^\prime/q^\prime)^\prime$ is increasing in $p$ and decreasing in $q$, the condition required on $X$ is stronger as $p \to 1$ and $q \to \infty$.
\end{rmk}

Given Proposition \ref{duality1}, we can set up the vector-valued approach to tent spaces (first considered in \cite{HTV91}) using the method of \cite{aB92}.
Fix $p \in(0,\infty)$, $q \in (1,\infty)$, and $\ga > 0$.
For simplicity of notation, write
\begin{equation*}
	L_\ga^q(X^+) := L^q\left( X^+ ; \frac{d\gm(y)}{V(y,\ga t)} \, \frac{dt}{t} \right).
\end{equation*}
We define an operator $\map{T_\ga}{T^{p,q,\ga}(X)}{L^p(X;L_\ga^q(X^+))}$ from the tent space into the $L_\ga^q(X^+)$-valued $L^p$ space on $X$ (see \cite[\textsection 2]{DU77} for vector-valued Lebesgue spaces) by setting
\begin{equation*}
	T_\ga f(x)(y,t) := f(y,t) \mb{1}_{\gG^\ga(x)}(y,t).
\end{equation*}
One can easily check that
\begin{equation*}
	\norm{T_\ga f}_{L^p(X;L_\ga^q(X^+))} = \norm{f}_{T^{p,q,\ga}(X)},
\end{equation*}
and so the tent space $T^{p,q,\ga}(X)$ can be identified with its image under $T_\ga$ in $L^p(X;L_\ga^q(X^+))$, provided that $T_\ga f$ is indeed a strongly measurable function of $x \in X$.
This can be shown for $q \in (1,\infty)$ by recourse to Pettis' measurability theorem \cite[\textsection 2.1, Theorem 2]{DU77}, which reduces the question to that of weak measurability of $T_\ga f$.
To prove weak measurability, suppose $g \in L_\ga^{q^\prime}(X)$; then
\begin{equation*}
	\langle T_\ga f(x),g \rangle = \iint_{\gG^\ga(x)} f(y,t) \overline{g(y,t)} \, \frac{d\gm(y)}{V(y, \ga t)} \, \frac{dt}{t},
\end{equation*}
which is measurable in $x$ by Lemma \ref{measurability}.
Thus $T_\ga f$ is weakly measurable, and therefore $T_\ga f$ is strongly measurable as claimed.

Now assume $p,q \in (1,\infty)$ and consider the operator $\gP_\ga$, sending $X^+$-valued functions on $X$ to $\CC$-valued functions on $X^+$, given by
\begin{equation*}
(\gP_\ga F)(y,t) := \frac{1}{V(y,\ga t)} \int_{B(y,\ga t)} F(x)(y,t) \, d\gm(x)
\end{equation*}
whenever this expression is defined.
Using the duality pairing from Proposition \ref{duality1} and the duality pairing $\langle\langle \cdot,\cdot \rangle\rangle$ for vector-valued $L^p$ spaces, for $f \in T^{p,q,\ga}(X)$ and $G \in L^{p^\prime}(X;L_\ga^{q^\prime}(X^+))$ we have
\begin{align*}
	\langle \langle T_\ga f, G \rangle \rangle
	&= \int_X \iint_{X^+} T_\ga f(x)(y,t) \overline{G(x)(y,t)} \, \frac{d\gm(y)}{V(y,\ga t)} \, \frac{dt}{t} \, d\gm(x) \\
	&= \iint_{X^+} \frac{f(y,t)}{V(y,\ga t)} \int_X \mb{1}_{B(y,\ga t)}(x) \overline{G(x)(y,t)} \, d\gm(x)\, d\gm(y) \, \frac{dt}{t} \\
	&= \iint_{X^+} f(y,t) \overline{(\gP_\ga G)(y,t)} \, d\gm(y) \, \frac{dt}{t} \\
	&= \langle \langle f, \gP_\ga G \rangle \rangle.
\end{align*}
Thus $\gP_\ga$ maps $L^{p^\prime}(X;L_\ga^{q^\prime}(X^+))$ to $T^{p^\prime,q^\prime,\ga}(X)$, by virtue of being the adjoint of $T_\ga$.
Consequently, the operator $P_\ga := T_\ga \gP_\ga$ is bounded from $L^p(X;L_\ga^q(X^+))$ to itself for $p,q \in (1,\infty)$.
A quick computation shows that $\gP_\ga T_\ga = I$, so that $P_\ga$ projects $L^p(X;L_\ga^q(X^+))$ onto $T_\ga(T^{p,q,\ga}(X))$.
This shows that $T_\ga(T^{p,q,\ga}(X))$ is a complemented subspace of $L^p(X;L_\ga^q(X^+))$.
This observation leads to the basic interpolation result for tent spaces.
Here $[\cdot,\cdot]_\gq$ denotes the complex interpolation functor (see \cite[Chapter 4]{BL76}).

\begin{prop}\label{interpolation1}
	Suppose that $X$ is either proper or doubling, and satisfies assumption (HL).
	Then for $p_0$, $p_1$, $q_0$, and $q_1$ in $(1, \infty)$, $\gq \in [0,1]$, and $\ga > 0$, we have (up to equivalence of norms)
	\begin{equation*}
		[T^{p_0,q_0,\ga}(X),T^{p_1,q_1,\ga}(X)]_\gq = T^{p,q,\ga}(X),
	\end{equation*}
	where $1/p = (1-\gq)/p_0 + \gq/p_1$ and $1/q = (1-\gq)/q_0 + \gq/q_1$.
\end{prop}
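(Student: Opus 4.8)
The plan is to read the statement off the vector-valued picture assembled just above. Recall that $T_\ga$ is an isometric embedding of $T^{p,q,\ga}(X)$ into $L^p(X;L_\ga^q(X^+))$, that $\gP_\ga$ is bounded from $L^p(X;L_\ga^q(X^+))$ onto $T^{p,q,\ga}(X)$ for every $p,q\in(1,\infty)$ --- this is the only place where Proposition \ref{duality1}, and hence the hypothesis (HL), is used --- and that $\gP_\ga T_\ga = I$. Since $T_\ga$ and $\gP_\ga$ are given by formulas that do not involve $p$ or $q$, they are simultaneously bounded at both endpoints and thus define morphisms of the compatible couples $\bar{A} := (T^{p_0,q_0,\ga}(X), T^{p_1,q_1,\ga}(X))$ and $\bar{B} := (L^{p_0}(X;L_\ga^{q_0}(X^+)), L^{p_1}(X;L_\ga^{q_1}(X^+)))$ with $\gP_\ga T_\ga = I$ on the sum. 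Hence $\bar{A}$ is a retract of $\bar{B}$, and by the stability of retracts under complex interpolation (see \cite[Theorem 6.4.2]{BL76}) we obtain, up to equivalence of norms,
\begin{equation*}
	[T^{p_0,q_0,\ga}(X), T^{p_1,q_1,\ga}(X)]_\gq = \gP_\ga\big([L^{p_0}(X;L_\ga^{q_0}(X^+)), L^{p_1}(X;L_\ga^{q_1}(X^+))]_\gq\big),
\end{equation*}
with $\norm{f}_{[\bar{A}]_\gq}$ comparable to $\norm{T_\ga f}_{[\bar{B}]_\gq}$.

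It then remains to identify the interpolation space of the vector-valued Lebesgue couple, and for this I would invoke Calder\'on's theorem on complex interpolation of vector-valued $L^p$ spaces (\cite[Theorem 5.1.2]{BL76}): since $\gm$, and therefore the measure $d\gm(y)/V(y,\ga t)\,dt/t$ on $X^+$, is $\gs$-finite, we have $[L^{p_0}(X;L_\ga^{q_0}(X^+)), L^{p_1}(X;L_\ga^{q_1}(X^+))]_\gq = L^p(X; [L_\ga^{q_0}(X^+), L_\ga^{q_1}(X^+)]_\gq)$, and the inner couple consists of Lebesgue spaces over a single fixed measure space, so $[L_\ga^{q_0}(X^+), L_\ga^{q_1}(X^+)]_\gq = L_\ga^q(X^+)$ by the elementary interpolation of $L^p$ spaces. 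Thus $[\bar{B}]_\gq = L^p(X;L_\ga^q(X^+))$, and applying $\gP_\ga$ --- which maps this space onto $T^{p,q,\ga}(X)$, being surjective because $\gP_\ga T_\ga = I$ on $T^{p,q,\ga}(X)$ --- yields $[T^{p_0,q_0,\ga}(X), T^{p_1,q_1,\ga}(X)]_\gq = T^{p,q,\ga}(X)$; the norm equivalence follows from $\norm{T_\ga f}_{L^p(X;L_\ga^q(X^+))} = \norm{f}_{T^{p,q,\ga}(X)}$.

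The substantive part of writing this out is the bookkeeping needed to make the abstract machinery applicable, rather than any analytic estimate. Specifically I would verify: (i) that $(T^{p_0,q_0,\ga}(X), T^{p_1,q_1,\ga}(X))$ is a genuine interpolation couple --- both spaces embed continuously into the Hausdorff space $L^1_{\mathrm{loc}}(X^+)$ of functions integrable over cylinders, by Lemma \ref{cptest} (this is the only other place where the hypothesis that $X$ is proper or doubling enters), and $T_\ga$ carries everything into a common ambient space for the couple $\bar{B}$; (ii) that $\gP_\ga T_\ga = I$ on the sum $T^{p_0,q_0,\ga}(X) + T^{p_1,q_1,\ga}(X)$, which is immediate since both operators act by fixed pointwise-defined formulas; and (iii) that the hypotheses of Calder\'on's theorem hold, which amounts to $\gs$-finiteness together with $q_0,q_1 \in (1,\infty)$. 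I expect item (i) --- ensuring that all the spaces in sight are realised inside one Hausdorff topological vector space, so that the complex interpolation functor is even defined --- to be the point requiring the most care, and it is precisely here that the choice to work with locally integrable representatives, set up in Section \ref{initdefns}, pays off.
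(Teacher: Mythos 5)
Your argument is correct and is essentially the paper's own proof: the paper likewise realises $T^{p,q,\ga}(X)$ as a complemented subspace (equivalently, a retract via $T_\ga$ and $\gP_\ga$ with $\gP_\ga T_\ga = I$) of $L^p(X;L_\ga^q(X^+))$, interpolates the vector-valued Lebesgue couple by Calder\'on's theorem, and concludes by the standard result on interpolation of complemented subspaces with common projections (\cite[Theorem 1.17.1.1]{hT78}), which is the same retraction principle you invoke. The only differences are cosmetic (choice of reference for the retract/complemented-subspace lemma), and your points (i)--(iii), including the role of Lemma \ref{cptest} in providing the common Hausdorff ambient space, match the paper's setup.
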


\begin{proof}
	Recall the identification
	\begin{equation*}
		T^{r,s,\ga}(X) \cong T_\ga T^{r,s,\ga}(X) \subset L^r(X;L_\ga^s(X^+))
	\end{equation*}
	for all $r \in (0,\infty)$ and $s \in (1,\infty)$.
	Since
	\begin{align*}
		[L^{p_0}(X;L_\ga^{q_0}(X^+)),L^{p_1}(X;L_\ga^{q_1}(X^+))]_\gq &= L^p(X;[L_\ga^{q_0}(X^+),L_\ga^{q_1}(X^+)]_\gq) \\
		&= L^p(X;L_\ga^q(X^+))
	\end{align*}
	applying the standard result on interpolation of complemented subspaces with common projections (see \cite[Theorem 1.17.1.1]{hT78}) yields
	\begin{align*}
		[T^{p_0,q_0,\ga}(X),T^{p_1,q_1,\ga}(X)]_\gq
		&= L^p(X;L_\ga^q(X^+)) \cap (T^{p_0,q_0,\ga}(X) + T^{p_1,q_1,\ga}(X)) \\
		&= T^{p,q,\ga}(X).
	\end{align*}
\end{proof}

\begin{rmk}
	Since \cite[Theorem 1.17.1.1]{hT78} is true for any interpolation functor (not just complex interpolation), analogues of Proposition \ref{interpolation1} hold for any interpolation functor $F$ for which the spaces $L^p(X;L_\ga^q(X^+))$ form an appropriate interpolation scale.
	In particular, Proposition \ref{interpolation1} (appropriately modified) holds for real interpolation.
\end{rmk}

\begin{rmk}\label{refsug}
  Following the first submission of this article, the anonymous referee suggested a more direct proof of Proposition \ref{interpolation1}, which avoids interpolation of complemented subspaces.
  Since $T_\ga$ acts as an isometry both from $T^{p_0,q_0,\ga}(X)$ to $L^{p_0}(X;L_\ga^{q_0}(X^+))$ and from $T^{p_1,q_1,\ga}(X)$ to $L^{p_1}(X;L_\ga^{q_1}(X^+))$, if $f \in [T^{p_0,q_0,\ga}(X),T^{p_1,q_1,\ga}(X)]_\gq$, then
  \begin{equation*}
    \norm{f}_{T^{p,q,\ga}(X)} = \norm{T_\ga f}_{L^p(X;L_\ga^q(X^+))} \leq \norm{f}_{[T^{p_0,q_0,\ga}(X),T^{p_1,q_1,\ga}(X)]_\gq}
  \end{equation*}
  due to the exactness of the complex interpolation functor (and similarly for the real interpolation functor).
  Hence $[T^{p_0,q_0,\ga}(X),T^{p_1,q_1,\ga}(X)]_\gq \subset T^{p,q,\ga}(X)$, and the reverse containment follows by duality.
  We have chosen to include both proofs for their own intrinsic interest.
\end{rmk}

\subsubsection{Endpoint results}

We now consider the tent spaces $T^{1,q,\ga}(X)$ and $T^{\infty,q,\ga}(X)$, and their relation to the rest of the tent space scale.
In this section, we prove the following duality result using the method of \cite{CMS85}.

\begin{prop}\label{duality2}
	Suppose $X$ is doubling, and let $\ga > 0$ and $q \in (1,\infty)$.
	Then the pairing $\langle \cdot, \cdot \rangle$ of Proposition \ref{duality1} realises $T^{\infty,q,\ga}(X)$ as the Banach space dual of $T^{1,q,\ga}(X)$, up to equivalence of norms.
\end{prop}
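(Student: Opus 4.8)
The plan is to follow the classical Coifman--Meyer--Stein argument, adapted to the doubling metric measure space setting, and leaning on the tools already established: the first integration lemma (Lemma \ref{il1}), the averaging trick (Lemma \ref{avgtrick}), the density of $L^q_c(X^+)$ in $T^{1,q,\ga}(X)$ (Proposition \ref{completenessetc}), the compatibility estimates (Lemma \ref{cptest}), and the doubling-dependent bound $\gm(O^*_\gg) \lesssim \gm(O)$ for the expanded sets. The proof splits into two containments.

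\textbf{Easy direction: $T^{\infty,q,\ga}(X) \hookrightarrow (T^{1,q,\ga}(X))^*$.}
Given $g \in T^{\infty,q,\ga}(X)$ and $f \in T^{1,q,\ga}(X)$, I would show $|\langle f,g\rangle| \lesssim \norm{f}_{T^{1,q,\ga}(X)} \norm{g}_{T^{\infty,q,\ga}(X)}$. The standard device is to integrate over the ``tent'' structure: for each height one estimates the inner integral of $f\bar g$ against the Carleson functional of $g$. Concretely, writing $O_\gl := \{\mc{A}_q^\ga(f) > \gl\}$ and using that $f$ is supported (up to the usual cone/shadow considerations) in $\bigcup_\gl$ of the tents $T^\gb(O_\gl)$ for a suitable aperture $\gb$, one decomposes $X^+$ into the layers $T^\gb(O_\gl)\setminus T^\gb(O_{2\gl})$, applies H\"older in $(y,t)$ with exponents $q,q'$ on each layer, bounds the $g$-integral over $T^\gb(O_\gl)$ by $\gm(O_\gl^*)^{1/q}\norm{g}_{T^{\infty,q,\ga}(X)}$ (enlarging the aperture and the set, which costs only a doubling constant), and bounds the $f$-integral by a constant times $\gl \gm(O_\gl)^{1/q}$ via Lemma \ref{il1}; summing the geometric series in $\gl$ and using $\gm(O_\gl^*)\lesssim\gm(O_\gl)$ together with $\int_0^\infty \gl\,\gm(O_\gl)\,d\gl/\gl \sim \norm{\mc{A}_q^\ga(f)}_{L^1}$ gives the claim. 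Some care with the change of aperture is needed here, but since we are only using it to pass from one fixed aperture to a larger one on the $g$ side (and can absorb the loss into the $T^{\infty,q,\ga}$ norm via a comparison of apertures that holds up to doubling constants), this is routine.

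\textbf{Hard direction: $(T^{1,q,\ga}(X))^* \hookrightarrow T^{\infty,q,\ga}(X)$.}
Let $\ell \in (T^{1,q,\ga}(X))^*$. As in the proof of Proposition \ref{duality1}, for each cylindrical $K\subset X^+$ Lemma \ref{cptest} identifies $\ell$ with a function $g_K\in L^{q'}(K)$, and patching over an exhausting sequence of cylinders yields $g\in L^{q'}_{\mathrm{loc}}(X^+)$ with $\ell(f) = \langle f,g\rangle$ for all cylindrically supported $f\in L^q(X^+)$, hence (by density and the easy-direction estimate, once we know $g\in T^{\infty,q,\ga}$) for all $f\in T^{1,q,\ga}(X)$. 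The task is to bound $\mc{C}^\ga_q(g)$ in $L^\infty$, i.e.\ to show $\gm(B)^{-1}\iint_{T^\ga(B)}|g|^q\,d\gm\,dt/t \lesssim \norm{\ell}^q$ for every ball $B$. The trick from \cite{CMS85} is to test $\ell$ against a cleverly chosen $f$ supported in the cylinder over $T^\ga(B)$: take $f(y,t) := \mb{1}_{T^\ga(B)}(y,t)\,|g(y,t)|^{q-2}\overline{g(y,t)}$ (interpreted as $0$ where $g=0$), which is cylindrically supported, so that $\langle f,g\rangle = \iint_{T^\ga(B)}|g|^q\,d\gm\,dt/t$. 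It then remains to prove $\norm{f}_{T^{1,q,\ga}(X)} \lesssim \gm(B)^{1-1/q}\big(\iint_{T^\ga(B)}|g|^q\,d\gm\,dt/t\big)^{1/q'}$. To see this, observe that $\mc{A}^\ga_q(f)$ is supported in $S^\ga(T^\ga(B))$, which is comparable (up to doubling) to $B$; and by the averaging trick (Lemma \ref{avgtrick}) and H\"older, $\norm{\mc{A}^\ga_q(f)}_{L^1(X)} \le \gm(S^\ga(T^\ga(B)))^{1/q'}\norm{\mc{A}^\ga_q(f)}_{L^q(X)} = \gm(S^\ga(T^\ga(B)))^{1/q'}\norm{f}_{L^q(X^+)}$, while $\norm{f}_{L^q(X^+)}^q = \iint_{T^\ga(B)}|g|^{q(q-1)}\,d\gm\,dt/t = \iint_{T^\ga(B)}|g|^q\,d\gm\,dt/t$. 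Combining, $|\langle f,g\rangle| \le \norm{\ell}\norm{f}_{T^{1,q,\ga}(X)} \lesssim \norm{\ell}\,\gm(B)^{1/q'}\big(\iint_{T^\ga(B)}|g|^q\big)^{1/q'}$, and dividing through by $\langle f,g\rangle^{1/q} = (\iint_{T^\ga(B)}|g|^q)^{1/q}$ — which is legitimate once we know it is finite, and finiteness follows because $T^\ga(B)$ has finite measure in $(d\gm\,dt/t)$ when $B$ is bounded, so $g\in L^{q'}$ there gives $\iint_{T^\ga(B)}|g|^q < \infty$ via the compatibility estimate — yields $\big(\gm(B)^{-1}\iint_{T^\ga(B)}|g|^q\,d\gm\,dt/t\big)^{1/q} \lesssim \norm{\ell}$. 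Taking the supremum over all balls $B$ containing a given point $x$ gives $\mc{C}^\ga_q(g)(x)\lesssim\norm{\ell}$, hence $g\in T^{\infty,q,\ga}(X)$ with $\norm{g}_{T^{\infty,q,\ga}(X)}\lesssim\norm{\ell}$.

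\textbf{Main obstacle.} The delicate point is the easy direction: the layer-cake decomposition genuinely uses the doubling hypothesis twice — once to control $\gm(O^*_\gg)\lesssim\gm(O)$ and once to handle the aperture change needed to line up the cone defining $\mc{A}^\ga_q(f)$ with the tents over the level sets $O_\gl$ — and one must be careful that the comparison of $T^{\infty,q,\ga}$ norms for different apertures (which is not free in our normalisation, per Remark \ref{obvcoa}) is only invoked in the doubling setting where it costs a harmless constant. I would organise the estimate so that all aperture adjustments are absorbed into a single doubling-dependent constant at the end, and I would state explicitly (perhaps as a small sublemma, or by citing the forthcoming change-of-aperture section) the inequality $\iint_{T^\gb(O)}|g|^q\,d\gm\,dt/t \lesssim \gm(O)\,\norm{g}_{T^{\infty,q,\ga}(X)}^q$ valid for doubling $X$, which packages the needed comparison. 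The hard-direction argument, by contrast, is essentially formal once the testing function $f$ is written down; the only subtlety there is justifying the division by $\langle f,g\rangle^{1/q}$, handled by the finiteness remark above.
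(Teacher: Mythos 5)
Your hard direction is essentially the paper's argument: the paper takes a supremum over nonnegative $\gy \in L^q(T^\ga(B))$ of unit norm rather than plugging in the extremal function, but the key estimate $\norm{\gy}_{T^{1,q,\ga}(X)} \leq \gm(B)^{1/q^\prime}\norm{\gy}_{L^q(X^+)}$ via $S^\ga(T^\ga(B)) = B$, H\"older on $X$, and $T^{q,q,\ga}(X)=L^q(X^+)$ is identical, and this formulation also avoids your division-by-$\langle f,g\rangle^{1/q}$ issue. Two slips there: the dual exponent forced by the pairing is $q^\prime$, so the test function should be $\mb{1}_{T^\ga(B)}|g|^{q^\prime-2}g$, giving $\langle f,g\rangle = \iint_{T^\ga(B)}|g|^{q^\prime}$ and $\norm{f}_{L^q}^q = \iint_{T^\ga(B)}|g|^{q(q^\prime-1)} = \iint_{T^\ga(B)}|g|^{q^\prime}$; your identity $q(q-1)=q$ holds only for $q=2$. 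Also $S^\ga(T^\ga(B))$ equals $B$ exactly for open $B$; no doubling is needed at that point.

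The genuine gap is in the easy direction, which the paper handles by a completely different device: the stopping-time function $h_{g,q^\prime,M}^\ga(x) = \sup\{h : \mc{A}_{q^\prime}^\ga(g|h)(x) \leq M\mc{C}_{q^\prime}^\ga(g)(x)\}$ and the measure lower bound $\gm\{x\in B : h(x)\geq r(B)\}\gtrsim\gm(B)$ of Lemma \ref{stopfun}; Fubini (Corollary \ref{stopfuncor}) then converts $\iint_{X^+}|f||g|\,d\gm\,dt/t$ into $\int_X \mc{A}_q^\ga(f|h)\,\mc{A}_{q^\prime}^\ga(g|h)\,d\gm \leq M\int_X\mc{A}_q^\ga(f)\,\mc{C}_{q^\prime}^\ga(g)\,d\gm$, with no level sets, no Whitney covering, and no aperture changes. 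Your layer-cake route can in principle be pushed through, but the step you attribute to Lemma \ref{il1}, namely $\iint_{T^\gb(O_\gl)\setminus T^\gb(O_{2\gl})}|f|^q\,d\gm\,dt/t\lesssim \gl^q\gm(O_\gl)$, does not follow from it: Lemma \ref{il1} bounds $\int_F\iint_{\gG^\ga(x)}$ from above by $\iint_{\gG^\ga(F)}$, whereas here you need the \emph{reverse} inequality, converting an integral over $\gG^\gb(F_{2\gl})\cap T^\gb(O_\gl)$ into $\int_{F_{2\gl}\cap O_\gl}$ of a cone functional. That reverse inequality requires $\gm(B(y,ct)\cap F_{2\gl})\gtrsim V(y,\gb t)$ for every $(y,t)$ in the layer, which fails for the raw level sets $F_{2\gl}$; one must pass to the set $(F_{2\gl})_\gg^*$ of global $\gg$-density points, enlarge the cone aperture accordingly, and then use doubling both for $\gm(O_{2\gl}^*)\lesssim\gm(O_{2\gl})$ and for the resulting change of aperture in $\mc{A}_q$. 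This is precisely the delicate point flagged in Remark \ref{cmserror} as the error in \cite{CMS85}: the $\mc{A}$-functional of $f$ restricted to such a layer is \emph{not} supported in, nor controlled by $2\gl$ on, the complement of $O_{2\gl}$. Your sketch gestures at $O_\gl^*$ and at aperture changes but never supplies the reverse integration lemma that makes the $f$-side estimate work, and the lemma you cite points the wrong way; either supply that lemma in full or adopt the stopping-time argument, which sidesteps the issue entirely.
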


As in \cite{CMS85}, we require a small series of definitions and lemmas to prove this result.
We define \emph{truncated cones} for $x \in X$, $\ga,h > 0$ by
\begin{equation*}
	\gG_h^\ga(x) := \gG^\ga(x) \cap \{(y,t) \in X^+ : t < h\},
\end{equation*}
and corresponding Lusin operators for $q > 0$ by
\begin{equation*}
	\mc{A}_q^\ga(f|h)(x) := \left(\iint_{\gG_h^\ga(x)} |f(y,t)|^q \, \frac{d\gm(y)}{V(y,\ga t)} \, \frac{dt}{t} \right)^{\frac{1}{q}}.
\end{equation*}
One can show that $\mc{A}_q^\ga(f|h)$ is measurable in the same way as for $\mc{A}_q^\ga(f)$.

\begin{lem}\label{stopfun}
	For each measurable function $g$ on $X^+$, each $q \in [1,\infty)$, and each $M > 0$, define
	\begin{equation*}
		h_{g,q,M}^\ga(x) := \sup\{h > 0 : \mc{A}_q^\ga(g|h)(x) \leq M\mc{C}_q^\ga(g)(x)\}
	\end{equation*}
	for $x \in X$.
	If $X$ is doubling, then for sufficiently large $M$ (depending on $X$, $q$, and $\ga$), whenever $B \subset X$ is a ball of radius $r$,
	\begin{equation*}
		\gm\{x \in B : h_{g,q,M}^\ga(x) \geq r\} \gtrsim_{X,\ga} \gm(B).
	\end{equation*}
\end{lem}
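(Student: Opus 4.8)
The plan is to reduce the statement to a Chebyshev-type estimate comparing the truncated area integral over $B$ with a single Carleson average over a controlled dilate of $B$. Write $B = B(x_B,r)$ and put $B' := B(x_B,(2\ga+1)r)$, so that $B \subseteq B'$ and, by doubling, $\gm(B') \le C_{X,\ga}\,\gm(B)$, where $C_{X,\ga}$ depends only on the doubling constant of $X$ and on $\ga$.

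The one genuinely geometric step is the containment $\gG^\ga(B) \cap \{t<r\} \subseteq T^\ga(B')$. I would prove this by the triangle inequality: if $(y,t) \in \gG^\ga(x)$ for some $x \in B$ with $t<r$, then $d(y,x_B) < \ga t + r$, so every $z \notin B'$ satisfies $d(y,z) \ge d(z,x_B) - d(y,x_B) > (2\ga+1)r - (\ga t + r) = \ga(2r-t) > \ga t$; hence $\dist(y, X\setminus B') > \ga t$, i.e.\ $(y,t) \in T^\ga(B')$. Applying Lemma \ref{il1} with $F = B$ and $\gF(y,t) := \mb{1}_{\{t<r\}}(y,t)\,|g(y,t)|^q/(V(y,\ga t)\,t)$ then gives
\begin{equation*}
	\int_B \mc{A}_q^\ga(g|r)(x)^q \, d\gm(x) \;\le\; \iint_{\gG^\ga(B)\cap\{t<r\}} |g(y,t)|^q \, d\gm(y)\,\frac{dt}{t} \;\le\; \iint_{T^\ga(B')} |g(y,t)|^q \, d\gm(y)\,\frac{dt}{t} \;=:\; I;
\end{equation*}
and on the other side, since $B'$ is a ball containing every point of $B$, the definition of $\mc{C}_q^\ga$ gives $\mc{C}_q^\ga(g)(x)^q \ge I/\gm(B')$ for every $x \in B$.

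Next I would run a Chebyshev argument on the bad set $E := \{x \in B : \mc{A}_q^\ga(g|r)(x) > M\,\mc{C}_q^\ga(g)(x)\}$, which is measurable because $\mc{A}_q^\ga(g|r)$ is measurable and $\mc{C}_q^\ga(g)$ is lower semicontinuous. When $0 < I < \infty$, every $x \in E$ satisfies $\mc{A}_q^\ga(g|r)(x)^q > M^q I/\gm(B')$, so integrating over $E$ and invoking the display above yields $\gm(E) \le \gm(B')/M^q$; the degenerate cases $I = 0$ and $I = \infty$ both force $\gm(E) = 0$ (in the former $\mc{A}_q^\ga(g|r)$ vanishes $\gm$-a.e.\ on $B$, in the latter $\mc{C}_q^\ga(g) \equiv \infty$ on $B$ because $\gm(B') < \infty$). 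Thus in all cases $\gm(E) \le \gm(B')/M^q \le (C_{X,\ga}/M^q)\,\gm(B)$.

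To conclude, observe that $\mc{A}_q^\ga(g|r)(x) \le M\,\mc{C}_q^\ga(g)(x)$ immediately gives $h_{g,q,M}^\ga(x) \ge r$, since $h=r$ then belongs to the set whose supremum defines $h_{g,q,M}^\ga(x)$. Hence $\{x \in B : h_{g,q,M}^\ga(x) \ge r\} \supseteq B \setminus E$, so
\begin{equation*}
	\gm\{x \in B : h_{g,q,M}^\ga(x) \ge r\} \;\ge\; \gm(B) - \gm(E) \;\ge\; \Big(1 - \frac{C_{X,\ga}}{M^q}\Big)\,\gm(B),
\end{equation*}
and it remains only to choose $M$ --- depending solely on $X$, $q$, and $\ga$ --- so large that $C_{X,\ga}/M^q \le 1/2$. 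I anticipate no serious obstacle: doubling is used exactly once, in the bound $\gm(B') \lesssim_{X,\ga} \gm(B)$, and the only step calling for genuine thought is the geometric containment together with its use via Lemma \ref{il1}; measurability of $E$, the two degenerate cases, and the final trivial implication are all routine.
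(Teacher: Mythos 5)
Your proposal is correct and follows essentially the same route as the paper: the containment of truncated cones over $B$ in $T^\ga((2\ga+1)B)$ (which is the paper's Lemma \ref{trunccones}), the first integration lemma, a Chebyshev estimate, and doubling applied once to compare $\gm((2\ga+1)B)$ with $\gm(B)$. Your explicit treatment of the degenerate cases $I=0$ and $I=\infty$ is a minor point of extra care not spelled out in the paper, but the argument is otherwise identical in substance.
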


\begin{proof}
	Let $B \subset X$ be a ball of radius $r$.
	Applying Lemmas \ref{trunccones} and \ref{il1}, the definition of $\mc{C}_q^\ga$, and doubling, we have
	\begin{align*}
		\int_B \mc{A}_q^\ga(g|r)(x)^q \, d\gm(x)
		&= \int_B \iint_{\gG_r^\ga(x)} \mb{1}_{T^\ga((2\ga+1)B)}(y,t) |g(y,t)|^q \, \frac{d\gm(y)}{V(y,\ga t)} \, \frac{dt}{t} \, d\gm(x) \\
		&\leq \int_B \iint_{\gG^\ga(x)} \mb{1}_{T^\ga((2\ga+1)B)}(y,t) |g(y,t)|^q \, \frac{d\gm(y)}{V(y,\ga t)} \, \frac{dt}{t} \, d\gm(x) \\
		&\leq \iint_{T^\ga((2\ga+1)B)} |g(y,t)|^q \, d\gm(y) \, \frac{dt}{t} \\
		&\leq \gm((2\ga+1)B) \inf_{x \in B} \mc{C}_q^\ga(g)(x)^q \\
		&\lesssim_{X,\ga} \gm(B) \inf_{x \in B} \mc{C}_q^\ga(g)(x)^q.
	\end{align*}
	We can estimate
	\begin{equation*}
		\int_B \mc{A}_q^\ga(g|r)(x)^q \, d\gm(x) \geq (M\inf_{x \in B}\mc{C}_q^\ga(g)(x))^q \gm\{x \in B : \mc{A}_q^\ga(g|r)(x) > M\inf_{x \in B}\mc{C}_q^\ga(g)(x)\},
	\end{equation*}
	and after rearranging and combining with the previous estimate we get
	\begin{equation*}
		M^q\left( \gm(B) - \gm\{x \in B : \mc{A}_q^\ga(g|r)(x) \leq M\inf_{x \in B} \mc{C}_q^\ga(g)(x)\} \right) \lesssim_{X,\ga} \gm(B).
	\end{equation*}
	More rearranging and straightforward estimating yields
	\begin{equation*}
		\gm\{x \in B : \mc{A}_q^\ga(g|r)(x) \leq M\mc{C}_q^\ga(g)(x) \} \geq (1-M^{-q}C_{X,\ga})\gm(B).
	\end{equation*}
	Since $h_{g,q,M}^\ga(x) \geq r$ if and only if $\mc{A}_q^\ga(g|r)(x) \leq M\mc{C}_q^\ga(g)(x)$ as $\mc{A}_q^\ga(g|h)$ is increasing in $h$, we can rewrite this as
	\begin{equation*}
		\gm\{x \in B : h_{g,q,M}^\ga(x) \geq r\} \geq (1-M^{-q}C_{X,\ga})\gm(B).
	\end{equation*}
	Choosing $M > C_{X,\ga}^{1/q}$ completes the proof.
\end{proof}

\begin{cor}\label{stopfuncor}
	With $X$, $g$, $q$, and $\ga$ as in the statement of the previous lemma, there exists $M = M(X,q,\ga)$ such that whenever $\gF$ is a nonnegative measurable function on $X^+$, we have
	\begin{equation*}
		\iint_{X^+} \gF(y,t) V(y,\ga t) \, d\gm(y) \, dt \lesssim_{X,\ga} \int_X \iint_{\gG_{h_{g,q,M}^\ga(x)/\ga}^\ga(x)} \gF(y,t) \, d\gm(y) \, dt \, d\gm(x).
	\end{equation*}
\end{cor}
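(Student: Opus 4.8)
The plan is to deduce this directly from Lemma \ref{stopfun} by a single application of the Fubini--Tonelli theorem. Fix $M = M(X,q,\ga)$ as in that lemma and write $h(x) := h_{g,q,M}^\ga(x)$; I would first check that $h$ is measurable on $X$, since $\{x : h(x) \ge s\} = \{x : \mc{A}_q^\ga(g|s)(x) \le M\mc{C}_q^\ga(g)(x)\}$ for each $s > 0$ (the function $s \mapsto \mc{A}_q^\ga(g|s)(x)$ being increasing and continuous), and both $\mc{A}_q^\ga(g|s)$ and $\mc{C}_q^\ga(g)$ are measurable. Consequently the subset $\{(y,t,x) \in X^+ \times X : y \in B(x,\ga t),\ \ga t < h(x)\}$ of $X^+ \times X$ is measurable, which legitimises the interchange of integrals below.

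Now unwind the definition of the truncated cone: $(y,t) \in \gG_{h(x)/\ga}^\ga(x)$ precisely when $d(x,y) < \ga t$ and $\ga t < h(x)$; equivalently, $x \in B(y,\ga t)$ and $h(x) > \ga t$. Hence, by Tonelli's theorem,
\begin{equation*}
	\int_X \iint_{\gG_{h(x)/\ga}^\ga(x)} \gF(y,t) \, d\gm(y) \, dt \, d\gm(x) = \iint_{X^+} \gF(y,t) \, \gm\{x \in B(y,\ga t) : h(x) > \ga t\} \, d\gm(y) \, dt.
\end{equation*}
Since $B(y,\ga t)$ is a ball of radius $\ga t$, Lemma \ref{stopfun} applied with this ball gives $\gm\{x \in B(y,\ga t) : h(x) \ge \ga t\} \gtrsim_{X,\ga} V(y,\ga t)$; and replacing ``$\ge$'' by ``$>$'' alters the integrand only on a set that is null for the measure $d\gm(y)\,dt$ on $X^+$, since the level sets of $h$ are pairwise disjoint and so $\gm\{x : h(x) = \ga t\} > 0$ for at most countably many $t$. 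Substituting this lower bound into the identity above yields exactly the asserted inequality.

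I do not expect a serious obstacle: essentially all the work is already in Lemma \ref{stopfun}, and the corollary is its reformulation obtained by testing the pointwise density estimate against $\gF$. The only points needing any care are the measurability of $h$ (to invoke Tonelli) and the harmless strict/non-strict discrepancy in the threshold for $h$, both of which are routine.
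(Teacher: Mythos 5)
Your proof is correct and follows essentially the same route as the paper: apply Lemma \ref{stopfun} to the ball $B(y,\ga t)$ of radius $\ga t$ and interchange the integrals by Fubini--Tonelli, the strict/non-strict threshold discrepancy being harmless. Your additional remarks on the measurability of $h$ and on the null set where $h(x)=\ga t$ are fine (the paper leaves these implicit), but the argument is the same one, merely run from the right-hand side instead of the left.
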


\begin{proof}
	This is a straightforward application of Fubini--Tonelli's theorem along with the previous lemma.
	Taking $M$ sufficiently large, Lemma \ref{stopfun} gives
	\begin{align*}
		\iint_{X^+} \gF(y,t) V(y,\ga t) \, d\gm(y) \, dt
		&\lesssim_{X,\ga} \iint_{X^+} \gF(y,t) \int_{\{x \in B(y,\ga t) : h_{g,q,\M}^\ga(x) \geq \ga t\}} \, d\gm(x) \, d\gm(y) \, dt \\
		&= \int_X \int_0^{h_{g,q,M}^\ga(x)/\ga} \int_{B(x,\ga t)} \gF(y,t) \, d\gm(y) \, dt \, d\gm(x) \\
		&= \int_X \iint_{\gG_{h_{g,q,M}^\ga(x)/\ga}^\ga(x)} \gF(y,t) \, d\gm(y) \, dt \, d\gm(x)
	\end{align*}
	as required.
\end{proof}

We are now ready for the proof of the main duality result.

\begin{proof}[Proof of Proposition \ref{duality2}]
	First suppose $f \in T^{1,q,\ga}(X)$ and $g \in T^{\infty,q^\prime,\ga}(X)$.
	By Corollary \ref{stopfuncor}, there exists $M = M(X,q,\ga) > 0$ such that
	\begin{equation*}
		\iint_{X^+} |f(y,t)||g(y,t)| \, d\gm(y) \, \frac{dt}{t} \lesssim_{X,\ga} \int_X \iint_{\gG_{h(x)}^\ga(x)} |f(y,t)||g(y,t)| \, \frac{d\gm(y)}{V(y,\ga t)} \, \frac{dt}{t} \, d\gm(x),
	\end{equation*}
	where $h(x) := h_{g,q^\prime,M}^\ga(x)/\ga$.
	Using H\"older's inequality and the definition of $h(x)$, we find that
	\begin{align*}
		\int_X \left( \iint_{\gG_{h(x)}^\ga(x)} |f(y,t)| |g(y,t)| \frac{d\gm(y)}{V(y,\ga t)} \, \frac{dt}{t} \right) \, d\gm(x)
		&\leq \int_X \mc{A}_q^\ga(f|h(x))(x) \mc{A}_{q^\prime}^\ga(g|h(x))(x) \, d\gm(x) \\
		&\leq M\int_X \mc{A}_q^\ga(f)(x) \mc{C}_{q^\prime}^\ga(g)(x) \, d\gm(x) \\
		&\lesssim_{X,q,\ga} \norm{f}_{T^{1,q,\ga}(X)} \norm{g}_{T^{\infty,q,\ga}(X)}.
	\end{align*}
	Hence every $g \in T^{\infty,q^\prime,\ga}(X)$ induces a bounded linear functional on $T^{1,q,\ga}(X)$ via the pairing $\langle f,g \rangle$ above, and so $T^{\infty,q^\prime,\ga}(X) \subset (T^{1,q,\ga}(X))^*$.
	
	Conversely, suppose $\ell \in (T^{1,q,\ga}(X))^*$.
	Then as in the proof of Proposition \ref{duality1}, from $\ell$ we construct a function $g \in L_\text{loc}^{q^\prime}(X^+)$ such that
	\begin{equation*}
		\iint_{X^+} f(y,t)\overline{g(y,t)} \, d\gm(y) \, \frac{dt}{t} = \ell(f)
	\end{equation*}
	for all $f \in T^{1,q,\ga}(X)$ with cylindrical support.
	We just need to show that $g$ is in $T^{\infty,q^\prime,\ga}(X)$.
	By the definition of the $T^{\infty,q^\prime,\ga}(X)$ norm, it suffices to estimate
	\begin{equation*}
		\left( \frac{1}{\gm(B)} \iint_{T^\ga(B)} |g(y,t)|^{q^\prime} \, d\gm(y) \, \frac{dt}{t} \right)^{\frac{1}{q^\prime}},
	\end{equation*}
	where $B \subset X$ is an arbitrary ball.
	
	For all nonnegative $\gy \in L^q(T^\ga(B))$ with $\norm{\gy}_{L^q(T^\ga(B))} \leq 1$, using that $S^\ga(T^\ga(B)) = B$ we have that
	\begin{align*}
		\norm{\gy}_{T^{1,q,\ga}(X)}
		&= \int_B \mc{A}_q^\ga(\gy)(x) \, d\gm(x) \\
		&\leq \gm(B)^{1/q^\prime} \norm{\gy}_{T^{q,q,\ga}(X)} \\
		&= \gm(B)^{1/q^\prime} \norm{\gy}_{L^q(X^+)} \\
		&\leq \gm(B)^{1/q^\prime}.
	\end{align*}
	In particular, $\gy$ is in $T^{1,q,\ga}(X)$, so we can write
	\begin{equation*}
		\iint_{T^\ga(B)} \overline{g}\gy \, d\gm \frac{dt}{t} = \ell(\gy).
	\end{equation*}
	Arguing by duality and using the above computation, we then have
	\begin{align*}
		\left( \frac{1}{\gm(B)} \iint_{T^\ga(B)} |g(y,t)|^{q^\prime} \, d\gm(y) \, \frac{dt}{t} \right)^{1/q^\prime}
		&= \gm(B)^{-1/q^\prime} \sup_\gy \iint_{T^\ga(B)} \overline{g}\gy \, d\gm \frac{dt}{t} \\
		&= \gm(B)^{-1/q^\prime} \sup_\gy \ell(\gy) \\
		&\leq \gm(B)^{-1/q^\prime} \norm{\ell} \norm{\gy}_{T^{1,q,\ga}(X)} \\
		&\leq \norm{\ell},
	\end{align*}
	where the supremum is taken over all $\gy$ described above.
	Now taking the supremum over all balls $B \subset X$, we find that
	\begin{equation*}
		\norm{g}_{T^{\infty,q^\prime,\ga}(X)} \leq \norm{\ell},
	\end{equation*}
	which completes the proof that $(T^{1,q,\ga}(X))^* \subset T^{\infty,q^\prime,\ga}(X)$.
	\end{proof}

Once Proposition \ref{duality2} is established, we can obtain the full scale of interpolation using the `convex reduction' argument of \cite[Theorem 3]{aB92} and Wolff's reiteration theorem (see \cite{tW82} and \cite{JNP83}).

\begin{prop}\label{interpolation2}
	Suppose that $X$ is doubling.
	Then for $p_0,p_1 \in [1,\infty]$ (not both equal to $\infty$), $q_0$ and $q_1$ in $(1,\infty)$, $\gq \in [0,1]$, and $\ga > 0$, we have (up to equivalence of norms)
	\begin{equation*}
		[T^{p_0,q_0,\ga}(X),T^{p_1,q_1,\ga}(X)]_\gq = T^{p,q,\ga}(X),
	\end{equation*}
	where $1/p = (1-\gq)/p_0 + \gq/p_1$ and $1/q = (1-\gq)/q_0 + \gq/q_1$.
\end{prop}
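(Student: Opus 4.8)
The plan is to combine the midpoint interpolation result (Proposition \ref{interpolation1}), the two duality theorems (Propositions \ref{duality1} and \ref{duality2}), and the standard facts that complex interpolation of a reflexive couple commutes with duality and that Wolff's reiteration theorem lets one build a global interpolation scale from overlapping local ones. Since Proposition \ref{interpolation1} already handles the case $p_0, p_1 \in (1,\infty)$, the work is to reach the endpoints $p=1$ and $p=\infty$, and then to interpolate across the whole range.

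First I would treat the couple $(T^{1,q_0,\ga}(X), T^{p_1,q_1,\ga}(X))$ with $p_1 \in (1,\infty)$. Following the convex reduction argument of \cite[Theorem 3]{aB92}: given $f \in T^{p,q,\ga}(X)$ with the target exponents, one raises $|f|$ to a suitable power $\gr$ so that $|f|^\gr$ lands in a tent space with second exponent bigger than $1$ and first exponent in $(1,\infty)$; the map $f \mapsto |f|^\gr \operatorname{sgn} f$ (interpreted appropriately in the complex setting) intertwines the tent space scales and is compatible with complex interpolation because complex interpolation commutes with such pointwise power nonlinearities up to the obvious change of exponents. This reduces the statement involving $p=1$ to an already-known case of Proposition \ref{interpolation1}. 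For the $p=\infty$ endpoint I would instead dualise: by Propositions \ref{duality1} and \ref{duality2} the spaces $T^{p,q,\ga}(X)$ for $p \in [1,\infty)$, $q \in (1,\infty)$ are (up to equivalent norms) the duals of $T^{p',q',\ga}(X)$ with $p' \in (1,\infty]$; since the relevant couples are reflexive when $p_0,p_1 \in (1,\infty)$ (this reflexivity is exactly what was proved inside Proposition \ref{duality1}), the duality theorem for complex interpolation, $[A_0,A_1]_\gq^* = [A_0^*, A_1^*]_\gq$, converts a known interpolation identity among the $T^{p,q,\ga}$ with finite first exponents into the corresponding identity for a couple having $\infty$ as one endpoint. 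Combining this with the convex reduction step covers all couples in which one of $p_0,p_1$ is an endpoint and the other lies strictly between.

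Finally I would invoke Wolff's reiteration theorem (see \cite{tW82}, \cite{JNP83}): we now have interpolation identities of the form $[T^{p_i,q_i,\ga}, T^{p_j,q_j,\ga}]_\gq = T^{p,q,\ga}$ valid on a family of sub-couples that overlap and together exhaust the parameter region $p_0, p_1 \in [1,\infty]$ (not both $\infty$), $q_0, q_1 \in (1,\infty)$ — concretely, one has the "interior" range from Proposition \ref{interpolation1}, the two one-endpoint ranges just obtained, and one must check the geometric hypothesis of Wolff's theorem (the four exponent pairs being arranged so the intermediate pairs are genuinely intermediate, which holds because $1/p$ and $1/q$ are affine in $\gq$). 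Wolff's theorem then upgrades these to the identity for an arbitrary couple $(T^{p_0,q_0,\ga}, T^{p_1,q_1,\ga})$ with parameters as in the statement, including the case $p_0 = 1$, $p_1 = \infty$.

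The main obstacle I expect is the convex reduction / power-map step: one must be careful that the nonlinear substitution $f \mapsto |f|^\gr \operatorname{sgn} f$ genuinely acts isometrically (up to constants) between the relevant tent spaces and is compatible with the complex interpolation functor in the $\CC$-valued setting, and that the resulting exponents stay in the admissible ranges of Propositions \ref{interpolation1} and \ref{duality2}. A secondary technical point is verifying the overlap/geometry hypotheses of Wolff's reiteration theorem precisely, since $q_0, q_1$ are only constrained to $(1,\infty)$ while $p_0, p_1$ may hit the endpoints, so one must choose the auxiliary exponent pairs so that all four points used in each application of Wolff's theorem have finite second exponents and the correct betweenness relations.
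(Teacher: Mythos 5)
Your outline matches the paper's proof in all three of its moving parts: the convex reduction of \cite[Theorem 3]{aB92} to reach $p_0=1$, duality via Propositions \ref{duality1} and \ref{duality2} to reach $p_1=\infty$, and Wolff reiteration for the couple $(T^{1,q_0,\ga}(X),T^{\infty,q_1,\ga}(X))$. There is, however, one genuine gap, located precisely where you anticipate ``the main obstacle'': the power-map step does not ``reduce the statement involving $p=1$ to an already-known case'' --- it only yields the single containment $T^{p,q,\ga}(X) \subset [T^{1,q_0,\ga}(X),T^{p_1,q_1,\ga}(X)]_\gq$. The correct mechanism is to write a cylindrically supported simple function $f$ as $f=g^2$, take a near-optimal analytic family $G_\ge \in \mc{F}(T^{2,2q_0,\ga}(X),T^{2p_1,2q_1,\ga}(X))$ with $G_\ge(\gq)=g$ (available by Proposition \ref{interpolation1} applied to the doubled exponents), and set $F_\ge := G_\ge^2$; a product of analytic families is analytic, so $F_\ge$ witnesses $f\in[T^{1,q_0,\ga}(X),T^{p_1,q_1,\ga}(X)]_\gq$ with the right norm bound. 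This construction cannot be run backwards: given an analytic family for $f$ in the endpoint couple one cannot extract an analytic ``square root'' witnessing membership of $g$, so the reverse containment $[T^{1,q_0,\ga}(X),T^{p_1,q_1,\ga}(X)]_\gq \subset T^{p,q,\ga}(X)$ requires a separate argument. The paper obtains it from the exactness of the interpolation functor applied to the isometric embedding $T_\ga$ of $T^{p_i,q_i,\ga}(X)$ into $L^{p_i}(X;L_\ga^{q_i}(X^+))$ (Remark \ref{refsug}), which is unaffected by taking $p_0=1$; the two containments for the $\infty$-endpoint couple then follow by dualising each of the two containments for the $1$-endpoint couple, using the reflexivity of $T^{p_1,q_1,\ga}(X)$ established in Proposition \ref{duality1} together with \cite[Corollary 4.5.2]{BL76}. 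With that repair, your Wolff reiteration step goes through exactly as in the paper, provided the two auxiliary spaces (such as $T^{p,q,\ga}(X)$ and $T^{p+1,q_3,\ga}(X)$ with $1/q_3 = (1-1/p^\prime)/q_0 + (1/p^\prime)/q_1$) are chosen so that all four exponent pairs lie on the relevant line segment in the $(1/p,1/q)$-plane and the intermediate ones have finite first exponent.
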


\begin{proof}
	First we will show that
	\begin{equation}\label{claiminterp}
		[T^{1,q_0,\ga}(X),T^{p_1,q_1,\ga}(X)]_\gq \supset T^{p,q,\ga}(X).
	\end{equation}
	Suppose $f \in T^{p,q,\ga}(X)$ is a cylindrically supported simple function.
	Then there exists another cylindrically supported simple function $g$ such that $f = g^2$.
	Then
	\begin{equation*}
		\norm{f}_{T^{p,q,\ga}(X)} = \norm{g}_{T^{2p,2q,\ga}(X)}^2,
	\end{equation*}
	and so $g$ is in $T^{2p,2q,\ga}(X)$.
	By Proposition \ref{interpolation1} we have the identification
	\begin{equation}\label{ident}
		T^{2p,2q,\ga}(X) = [T^{2,2q_0,\ga}(X),T^{2p_1,2q_1,\ga}(X)]_\gq
	\end{equation}
	up to equivalence of norms, and so by the definition of the complex interpolation functor (see Section \ref{ainterpolation}), there exists for each $\ge > 0$ a function
	\begin{equation*}
		G_\ge \in \mc{F}(T^{2,2q_0,\ga}(X),T^{2p_1,2q_1,\ga}(X))
	\end{equation*}
	such that $G_\ge(\gq) = g$ and
	\begin{align*}
		\norm{G_\ge}_{\mc{F}(T^{2,2q_0,\ga}(X),T^{2p_1,2q_1,\ga}(X)} &\leq (1+\ge)\norm{g}_{[T^{2,2q_0,\ga}(X),T^{2p_1,2q_1,\ga}(X)]_\gq} \\
		&\simeq (1+\ge) \norm{g}_{T^{2p,2q,\ga}(X)},
	\end{align*}
	the implicit constant coming from the norm equivalence \eqref{ident}.
	Define $F_\ge := G_\ge^2$.
	Then we have
	\begin{equation*}
		F_\ge \in \mc{F}(T^{1,q_0,\ga}(X),T^{p_1,q_1,\ga}(X)),
	\end{equation*}
	with
	\begin{align*}
		\norm{F_\ge}_{\mc{F}(T^{1,q_0,\ga}(X),T^{p_1,q_1,\ga}(X))}
		&= \norm{G_\ge}_{\mc{F}(T^{2,2q_0,\ga}(X),T^{2p_1,2q_1,\ga}(X))}^2 \\
		&\lesssim (1+\ge)^2 \norm{g}_{T^{2p,2q,\ga}(X)}^2 \\
		&= (1+\ge)^2 \norm{f}_{T^{p,q,\ga}(X)}.
	\end{align*}
	Therefore
	\begin{equation*}
		\norm{f}_{[T^{1,q_0,\ga}(X),T^{p_1,q_1,\ga}(X)]_\gq} \lesssim \norm{f}_{T^{p,q,\ga}(X)},
	\end{equation*}
	and so the inclusion \eqref{claiminterp} follows from the fact that cylindrically supported simple functions are dense in $T^{p,q,\ga}(X)$.
	
	By the duality theorem \cite[Corollary 4.5.2]{BL76} for interpolation (using that $T^{p_1,q_1,\ga}(X)$ is reflexive, the inclusion \eqref{claiminterp}, and Propositions \ref{duality1} and \ref{duality2}, we have
	\begin{equation*}
		[T^{p_1^\prime,q_1^\prime,\ga}(X), T^{\infty,q_0^\prime,\ga}(X)]_{1-\gq} \subset T^{p^\prime,q^\prime,\ga}(X).
	\end{equation*}
	Therefore we have the containment
	\begin{equation}\label{inftyinterp}
		[T^{p_0,q_0,\ga}(X),T^{\infty,q_1,\ga}(X)]_\gq \subset T^{p,q,\ga}(X).
	\end{equation}
	
        The reverse containment can be obtained from
        \begin{equation}\label{revcontain}
          [T^{1,q_0,\ga}(X),T^{p_1,q_1,\ga}(X)]_\gq \subset T^{p,q,\ga}(X)
        \end{equation}
        (for $p_1$, $q_0$, $q_1 \in (1,\infty)$) by duality.
        The containment \eqref{revcontain} can be obtained as in Remark \ref{refsug}, with $p_0 = 1$ not changing the validity of this method.\footnote{We thank the anonymous referee once more for this suggestion.}
	
	Finally, it remains to consider the case when $p_0 = 1$ and $p_1 = \infty$.
	This is covered by Wolff reiteration.
	Set $A_1 = T^{1,q_0,\ga}(X)$, $A_2 = T^{p,q,\ga}(X)$, $A_3 = T^{p+1,q_3,\ga}(X)$, and $A_4 = T^{\infty,q_1,\ga}(X)$ for an approprate choice of $q_3$.\footnote{More precisely, we need to take $1/q_3 = (1-1/p^\prime)/q_0 + (1/p^\prime)/q_1$.}
	Then for an appropriate index $\gh$, we have $[A_1,A_3]_{\gq/\gh} = A_2$ and $[A_2,A_4]_{(\gh-\gq)/(1-\gq)} = A_3$.
	Therefore by Wolff reiteration, we have $[A_1,A_4]_\gq = A_2$; that is, $[T^{1,q_0,\ga}(X),T^{\infty,q_1,\ga}(X)]_\gq = T^{p,q,\ga}(X)$.
	This completes the proof.
\end{proof}

\begin{rmk}
	Note that doubling is not explicitly used in the above proof; it is only required to the extent that it is needed to prove Propositions \ref{duality1} and \ref{duality2} (as Proposition \ref{interpolation1} follows from \ref{duality1}).
	If these propositions could be proven under some assumptions other than doubling, then it would follow that Proposition \ref{interpolation2} holds under these assumptions.
\end{rmk}

\begin{rmk}\label{cmserror}
	The proof of \cite[Lemma 5]{CMS85}, which amounts to proving the containment \eqref{claiminterp}, contains a mistake which is seemingly irrepairable without resorting to more advanced techniques.
	This mistake appears on page 323, line -3, when it is stated that ``$A(f_k)$ is supported in $O_k^* - O_{k+1}$'' (and in particular, that $A(f_k)$ is supported in $O_{k+1}^c$).
	However (reverting to our notation), since $f_k := \mb{1}_{T((O_k)_\gg^*) \sm T((O_{k+1})_\gg^*)} f$, $\mc{A}_2^1(f_k)$ is supported on
	\begin{equation*}
		S^1(T((O_k)_\gg^*) \sm T((O_{k+1})_\gg^*)) = (O_k)_\gg^*
	\end{equation*}
	and we cannot conclude that $\mc{A}_2^1(f_k)$ is supported away from $O_{k+1}$.
	Simple $1$-dimensional examples can be constructed which show that this is false in general.
	Hence the containment \eqref{claiminterp} is not fully proven in \cite{CMS85}; the first valid proof in the Euclidean case that we know of is  in \cite{aB92} (the full range of interpolation is not obtained in \cite{HTV91}.)
\end{rmk}

\subsection{Change of aperture}

Under the doubling assumption, the change of aperture result can be proven without assuming (NI) by means of the vector-valued method.
The proof is a combination of the techniques of \cite{HTV91} and \cite{aB92}.

\begin{prop}
	Suppose $X$ is doubling.
	For $\ga,\gb \in (0,\infty)$ and $p,q \in (0,\infty)$, the tent space (quasi-)norms $\norm{\cdot}_{T^{p,q,\ga}(X)}$ and $\norm{\cdot}_{T^{p,q,\gb}(X)}$ are equivalent.
\end{prop}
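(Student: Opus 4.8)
The plan is to isolate the single non-trivial inequality, prove it first for $p,q \in (1,\infty)$ by duality, and then remove the restriction on the exponents by a convex reduction. Assume without loss of generality that $\ga < \gb$. One direction comes for free: since $\gG^\ga(x) \subseteq \gG^\gb(x)$ and, by doubling, $V(y,\gb t) \lesssim_X V(y,\ga t)$ (as $\gb t$ and $\ga t$ differ by the fixed factor $\gb/\ga$),
\begin{equation*}
  \mc{A}_q^\ga(f)(x)^q = \iint_{\gG^\ga(x)} |f(y,t)|^q \, \frac{d\gm(y)}{V(y,\ga t)} \, \frac{dt}{t} \lesssim_X \iint_{\gG^\gb(x)} |f(y,t)|^q \, \frac{d\gm(y)}{V(y,\gb t)} \, \frac{dt}{t} = \mc{A}_q^\gb(f)(x)^q,
\end{equation*}
so $\mc{A}_q^\ga(f) \lesssim_X \mc{A}_q^\gb(f)$ pointwise, hence $\norm{f}_{T^{p,q,\ga}(X)} \lesssim_X \norm{f}_{T^{p,q,\gb}(X)}$ for every $p,q \in (0,\infty)$. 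The whole problem thus reduces to proving the reverse inequality $\norm{f}_{T^{p,q,\gb}(X)} \lesssim_X \norm{f}_{T^{p,q,\ga}(X)}$.

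For $p=q$ this is trivial, since $T^{p,p,\ga}(X) = L^p(X^+) = T^{p,p,\gb}(X)$ with equal norms by Lemma \ref{avgtrick}. For $p,q \in (1,\infty)$ I would argue by duality. The averaging trick followed by two applications of H\"older's inequality --- precisely the computation \eqref{di} --- gives $|\langle f,g \rangle| \leq \norm{f}_{T^{p,q,\ga}(X)} \norm{g}_{T^{p',q',\ga}(X)}$ for any aperture; combining this with the free inequality above applied to the conjugate exponents $p',q' \in (1,\infty)$ yields
\begin{equation*}
  |\langle f,g \rangle| \leq \norm{f}_{T^{p,q,\ga}(X)} \norm{g}_{T^{p',q',\ga}(X)} \lesssim_X \norm{f}_{T^{p,q,\ga}(X)} \norm{g}_{T^{p',q',\gb}(X)}.
\end{equation*}
Since $X$ is doubling it satisfies (HL), so by Proposition \ref{duality1} (applied with exponents $p'$, $q'$ and aperture $\gb$), $T^{p,q,\gb}(X)$ is the dual of $T^{p',q',\gb}(X)$ under $\langle\cdot,\cdot\rangle$, up to equivalence of norms. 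Taking the supremum of the displayed inequality over $g$ in the unit ball of $T^{p',q',\gb}(X)$ gives $\norm{f}_{T^{p,q,\gb}(X)} \lesssim_X \norm{f}_{T^{p,q,\ga}(X)}$. For this to apply to an $f$ not assumed a priori to lie in $T^{p,q,\gb}(X)$, one notes that such an $f \in T^{p,q,\ga}(X)$ induces via $\langle\cdot,\cdot\rangle$ a bounded functional on the dense subspace of cylindrically supported $L^{q'}(X^+)$ functions in $T^{p',q',\gb}(X)$, hence on all of $T^{p',q',\gb}(X)$, and by Lemma \ref{cptest} its representing function agrees with $f$ on every cylinder.

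To reach arbitrary $p,q \in (0,\infty)$ I would use the convex reduction of \cite{aB92}, exactly as in the proof of Proposition \ref{interpolation2}. Fix $k \in \NN$ with $2^k p > 1$ and $2^k q > 1$. For a cylindrically supported simple function $f$, choose a cylindrically supported simple $g$ with $g^{2^k} = f$; then $\mc{A}_q^\ga(f) = \bigl(\mc{A}_{2^k q}^\ga(g)\bigr)^{2^k}$, and likewise for aperture $\gb$, so $\norm{f}_{T^{p,q,\ga}(X)} = \norm{g}_{T^{2^k p, 2^k q, \ga}(X)}^{2^k}$ and similarly with $\gb$; applying the case already proven to $g$ then gives $\norm{f}_{T^{p,q,\gb}(X)} \lesssim_X \norm{f}_{T^{p,q,\ga}(X)}$. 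Since cylindrically supported simple functions are dense in $T^{p,q,\ga}(X)$ (by Proposition \ref{completenessetc} and Lemma \ref{cptest}), one final closure argument of the above type extends the estimate to all of $T^{p,q,\ga}(X)$, and combined with the free direction this establishes the claimed equivalence.

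The analytic core --- tent space duality --- is already available through Proposition \ref{duality1}, so the main difficulty here is organisational: carrying out the two density/closure steps cleanly for functions not a priori in the target space, and checking that every implicit constant depends only on $X$, $p$, $q$, $\ga$ and $\gb$. It is worth remarking that the most direct alternative, namely proving that the operator $T_\gb \gP_\ga$ of Section \ref{dvvi} is bounded from $L^p(X;L_\ga^q(X^+))$ to $L^p(X;L_\gb^q(X^+))$ (which, since $\gP_\ga T_\ga = I$, would immediately give $\norm{f}_{T^{p,q,\gb}(X)} \lesssim \norm{f}_{T^{p,q,\ga}(X)}$), reduces after an application of Jensen's inequality on the fibre averages to a delicate estimate for the averaging operators $M_s$ on $L^{p/q}(X)$ which in fact fails when $p < q$; the duality argument sidesteps this.
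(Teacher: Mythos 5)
Your argument is correct, but the core case $p,q \in (1,\infty)$ is handled by a genuinely different mechanism than the paper's. The paper stays inside the vector-valued framework of Section \ref{dvvi} (after switching to the $\mb{b}=1$ normalisation): it computes the pointwise identity $P_\gb T_\ga f(x)(y,t) = T_\gb f(x)(y,t)\,V(y,\min(\ga,\gb)t)/V(y,t)$, deduces $T_\gb f \lesssim_X P_\gb T_\ga f$ from doubling, and then invokes the boundedness of the projection $P_\gb$ on $L^p(X;L_1^q(X^+))$. You instead run a direct duality argument: the pairing estimate \eqref{di} plus the ``free'' inequality $\norm{g}_{T^{p',q',\ga}} \lesssim_X \norm{g}_{T^{p',q',\gb}}$ on the dual side shows that $f$ acts as a bounded functional on $T^{p',q',\gb}(X)$ with norm $\lesssim_X \norm{f}_{T^{p,q,\ga}(X)}$, and Proposition \ref{duality1} converts this into membership of $T^{p,q,\gb}(X)$ with the right bound; your handling of the a priori issue (representing the functional on cylinders via Lemma \ref{cptest} and identifying the representative with $f$) is the correct way to close that loop. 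The two routes are of comparable depth --- the boundedness of $P_\gb$ in the paper is itself proved by duality, so both ultimately rest on Proposition \ref{duality1} and hence on (HL) --- but the paper's version yields an explicit pointwise comparison and a quantitative constant $\sup_{(y,t)}(V(y,t)/V(y,\gb t))^M$ (see the remark following its proof), whereas yours is shorter and avoids the vector-valued machinery entirely. The convex reduction you use to pass to general $p,q \in (0,\infty)$ (taking $2^k$-th roots of cylindrically supported simple functions and concluding by density) is exactly the paper's second step, and your explicit verification of the trivial direction under doubling correctly discharges the caveat of Remark \ref{obvcoa}.
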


\begin{proof}
	First suppose $p,q \in (1,\infty)$.
	Since $X$ is doubling, we can replace our definition of $\mc{A}_q^\ga$ with the definition
	\begin{equation*}
		\mc{A}_q^\ga(f)(x)^q := \iint_{\gG^\ga(x)} |f(y,t)|^q \, \frac{d\gm(y)}{V(y,t)} \, \frac{dt}{t};
	\end{equation*}
	using the notation of Section \ref{initdefns}, this is the definition with $\mb{a} = y$ and $\mb{b} = 1$.
	Having made this change, the vector-valued approach to tent spaces (see Section \ref{dvvi}) transforms as follows.
	The tent space $T^{p,q,\ga}(X)$ now embeds isometrically into $L^p(X;L_1^q(X^+))$ via the operator $T_\ga$ defined, as before, by
	\begin{equation*}
		T_\ga f(x)(y,t) := f(y,t) \mb{1}_{\gG^\ga(x)}(y,t)
	\end{equation*}
	for $f \in T^{p,q,\ga}(X)$.
	The adjoint of $T_\ga$ is the operator $\gP_\ga$, now defined by
	\begin{equation*}
		(\gP_\ga G)(y,t) := \frac{1}{V(y,t)} \int_{B(y,\ga t)} G(z)(y,t) \, d\gm(z)
	\end{equation*}
	for $G \in L^p(X;L_1^q(X^+))$.
	The composition $P_\ga := T_\ga \gP_\ga$ is then a bounded projection from $L^p(X;L_1^q(X^+))$ onto $T_\ga T^{p,q,\ga}(X)$, and can be written in the form
	\begin{equation*}
		P_\ga G(x)(y,t) = \frac{\mb{1}_{\gG^\ga(x)}(y,t)}{V(y,t)} \int_{B(y,\ga t)} G(z)(y,t) \, d\gm(z).
	\end{equation*}
	For $f \in T^{p,q,\ga}(X)$, we can easily compute
	\begin{equation}\label{PTident}
		P_\gb T_\ga f(x)(y,t) = T_\gb f(x)(y,t) \frac{V(y,\min(\ga,\gb)t)}{V(y,t)}.
	\end{equation}

	Without loss of generality, suppose $\gb > \ga$.
	Then we obviously have
	\begin{equation*}
		\norm{\cdot}_{T^{p,q,\ga}(X)} \lesssim_{q,\ga,\gb,X} \norm{\cdot}_{T^{p,q,\gb}(X)}
	\end{equation*}
	by Remark \ref{obvcoa}.
	It remains to show that
	\begin{equation}\label{coaeq}
		\norm{\cdot}_{T^{p,q,\gb}(X)} \lesssim_{p,q,\ga,\gb,X} \norm{\cdot}_{T^{p,q,\ga}(X)}.
	\end{equation}
	From \eqref{PTident} and doubling, for $f \in T^{p,q,\ga}(X)$ we have that
	\begin{equation*}
		T_\gb f(x)(y,t) \lesssim_{X,\ga} P_\gb T_\ga f(x)(y,t),
	\end{equation*}
	and so we can write
	\begin{align*}
		\norm{f}_{T^{p,q,\gb}(X)}
		&= \norm{T_\gb f}_{L^p(X;L_1^q(X^+))} \\
		&\lesssim_{X,\ga} \norm{P_\gb T_\ga f}_{L^p(X;L_1^q(X^+))} \\
		&\leq \norm{P_\gb}_{\mc{L}(L^p(X;L_1^q(X^+)))} \norm{T_\ga f}_{L^p(X;L_1^q(X^+))} \\
		&\lesssim_{p,q,\gb,X} \norm{f}_{T^{p,q,\ga}(X)} 
	\end{align*}
	since $P_\gb$ is a bounded operator on $L^p(X;L_1^q(X^+))$.
	This shows \eqref{coaeq}, and completes the proof for $p,q \in (1,\infty)$.
	
	Now suppose that at least one of $p$ and $q$ is not in $(1,\infty)$, and suppose $f \in T^{p,q,\ga}(X)$ is a cylindrically supported simple function.
	Choose an integer $M$ such that both $Mp$ and $Mq$ are in $(1,\infty)$.
	Then there exists a cylindrically supported simple function $g$ with $g^M = f$.
	We then have
	\begin{align*}
		\norm{f}_{T^{p,q,\ga}(X)}^{1/M}
		&= \norm{g^M}_{T^{p,q,\ga}(X)}^{1/M} \\
		&= \norm{g}_{T^{Mp,Mq,\ga}(X)} \\
		&\simeq_{p,q,\ga,\gb,X} \norm{g}_{T^{Mp,Mq,\gb}(X)} \\	
		&= \norm{f}_{T^{p,q,\gb}(X)}^{1/M},
	\end{align*}
	and so the result is true for cylindrically supported simple functions, with an implicit constant which does not depend on the support of such a function.
	Since the cylindrically supported simple functions are dense in $T^{p,q,\ga}(X)$, the proof is complete.
\end{proof}

\begin{rmk}
	Written more precisely, with $p,q \in (0,\infty)$ and $\gb < 1$, the inequality \eqref{coaeq} is of the form
	\begin{equation*}
		\norm{\cdot}_{T^{p,q,1}(X)} \lesssim_{p,q,X} \sup_{(y,t) \in X^+} \left( \frac{V(y,t)}{V(y,\gb t)} \right)^M \norm{\cdot}_{T^{p,q,\gb}(X)}.  
	\end{equation*}
	where $M$ is such that $Mp,Mq \in (1,\infty)$.
\end{rmk}

\subsection{Relations between $\mc{A}$ and $\mc{C}$}

Again, this proposition follows from the methods of \cite{CMS85}.

\begin{prop}
		Suppose $X$ satisfies (HL), and suppose $0 < q < p < \infty$ and $\ga > 0$.
		Then
		\begin{equation*}
			\norm{\mc{C}_q^\ga(f)}_{L^p(X)} \lesssim_{p,q,X} \norm{\mc{A}_q^\ga(f)}_{L^p(X)}.
		\end{equation*}
\end{prop}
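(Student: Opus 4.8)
The plan is to reduce to the case $q=1$ and then dominate $\mc{C}_1^\ga$ pointwise by the uncentred maximal operator applied to $\mc{A}_1^\ga$, after which (HL) closes the argument.

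\emph{Reduction.} First I would set $g := |f|^q$, a nonnegative measurable function on $X^+$. Unwinding the definitions gives $\mc{C}_q^\ga(f)^q = \mc{C}_1^\ga(g)$ and $\mc{A}_q^\ga(f)^q = \mc{A}_1^\ga(g)$ pointwise on $X$, whence $\norm{\mc{C}_q^\ga(f)}_{L^p(X)}^q = \norm{\mc{C}_1^\ga(g)}_{L^{p/q}(X)}$ and similarly for $\mc{A}$. Since $p/q \in (1,\infty)$, it therefore suffices to prove
\begin{equation*}
	\norm{\mc{C}_1^\ga(h)}_{L^s(X)} \lesssim_{s,X} \norm{\mc{A}_1^\ga(h)}_{L^s(X)}
\end{equation*}
for every $s \in (1,\infty)$ and every nonnegative measurable $h$ on $X^+$.

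\emph{Pointwise domination.} The key step is the inequality $\mc{C}_1^\ga(h)(x) \leq \mc{M}(\mc{A}_1^\ga(h))(x)$ for all $x \in X$. I would fix a ball $B \ni x$ and recall that $(y,t) \in T^\ga(B)$ exactly when $B(y,\ga t) \subset B$. A Fubini--Tonelli computation identical in structure to the proof of Lemma \ref{avgtrick}, but with the outer $x$-integration restricted to $B$, gives
\begin{align*}
	\int_B \mc{A}_1^\ga(h)(x) \, d\gm(x)
	&= \iint_{X^+} \frac{\gm(B \cap B(y,\ga t))}{V(y,\ga t)} \, h(y,t) \, d\gm(y) \, \frac{dt}{t} \\
	&\geq \iint_{T^\ga(B)} h(y,t) \, d\gm(y) \, \frac{dt}{t},
\end{align*}
the inequality holding because $h \geq 0$ and the weight $\gm(B \cap B(y,\ga t))/V(y,\ga t)$ equals $1$ on $T^\ga(B)$. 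Dividing by $\gm(B)$ shows $\frac{1}{\gm(B)} \iint_{T^\ga(B)} h \, d\gm \, \frac{dt}{t} \leq \frac{1}{\gm(B)} \int_B \mc{A}_1^\ga(h) \, d\gm \leq \mc{M}(\mc{A}_1^\ga(h))(x)$; taking the supremum over all balls $B$ containing $x$ yields the claimed pointwise bound.

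\emph{Conclusion.} Since $X$ satisfies (HL), $\mc{M}$ is bounded on $L^s(X)$, so
\begin{equation*}
	\norm{\mc{C}_1^\ga(h)}_{L^s(X)} \leq \norm{\mc{M}(\mc{A}_1^\ga(h))}_{L^s(X)} \lesssim_{s,X} \norm{\mc{A}_1^\ga(h)}_{L^s(X)},
\end{equation*}
and tracking $s = p/q$ back through the reduction gives the proposition with implicit constant depending only on $p$, $q$, and $X$. No step is genuinely hard; the only points requiring attention are the use of the \emph{uncentred} maximal operator --- so that the supremum over all balls $B \ni x$ in the definition of $\mc{C}_q^\ga$ really is controlled --- and the observation $(y,t) \in T^\ga(B) \iff B(y,\ga t) \subset B$, which collapses the weight appearing after Fubini to $1$ on the tent.
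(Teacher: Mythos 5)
Your proposal is correct and follows essentially the same route as the paper: a Fubini--Tonelli computation showing that the tent average over $T^\ga(B)$ is dominated by the average of $\mc{A}_q^\ga(f)^q$ over $B$ (using that $B(y,\ga t)\subset B$ for $(y,t)\in T^\ga(B)$, equivalently $S^\ga(T^\ga(B))=B$), hence the pointwise bound $\mc{C}_q^\ga(f)^q\leq\mc{M}(\mc{A}_q^\ga(f)^q)$, followed by (HL) at the exponent $p/q>1$. Your preliminary reduction to $q=1$ via $g=|f|^q$ is only a cosmetic repackaging of the same argument, and all steps check out.
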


\begin{proof}
	Let $B \subset X$ be a ball.
	Then by Fubini--Tonelli's theorem, using $S^\ga(T^\ga(B)) = B$,
	\begin{align*}
		\frac{1}{\gm(B)}\iint_{T^\ga(B)} |f(y,t)|^q \, d\gm(y) \, \frac{dt}{t}
		&= \frac{1}{\gm(B)} \iint_{T^\ga(B)} \frac{|f(y,t)|^q}{V(y,\ga t)} \int_{B(y,\ga t)} \, d\gm(x) \, d\gm(y) \, \frac{dt}{t} \\
		&= \frac{1}{\gm(B)} \int_X \iint_{T^\ga(B)} \mb{1}_{B(y,\ga t)}(x) |f(y,t)|^q \, \frac{d\gm(y)}{V(y,\ga t)} \, \frac{dt}{t} \, d\gm(x) \\
		&= \frac{1}{\gm(B)} \int_B \iint_{T^\ga(B)} \mb{1}_{B(x,\ga t)}(y) |f(y,t)|^q  \, \frac{d\gm(y)}{V(y,\ga t)} \, \frac{dt}{t} \, d\gm(x)\\
		&\leq \frac{1}{\gm(B)} \int_B \iint_{X^+} \mb{1}_{B(x,\ga t)}(y) |f(y,t)|^q \, \frac{d\gm(y)}{V(y,\ga t)} \, \frac{dt}{t} \, d\gm(x)\\
		&= \frac{1}{\gm(B)} \int_B \mc{A}_q^\ga(f)(x)^q \, d\gm(x).
	\end{align*}
	Now fix $x \in X$ and take the supremum of both sides of this inequality over all balls $B$ containing $x$.
	We find that
	\begin{equation*}
		\mc{C}_q^\ga(f)(x)^q \leq \mc{M}(\mc{A}_q^\ga(f)^q)(x).
	\end{equation*}
	Since $p/q > 1$, we can apply (HL) to get
	\begin{align*}
		\norm{\mc{C}_q^\ga(f)}_{L^p(X)}
		&\leq \norm{\mc{M}(\mc{A}_q^\ga(f)^q)^{1/q}}_{L^p(X)} \\
		&= \norm{\mc{M}(\mc{A}_q^\ga(f)^q)}_{L^{p/q}(X)}^{1/q} \\
		&\lesssim_{p,q,X} \norm{\mc{A}_q^\ga(f)^q}_{L^{p/q}(X)}^{1/q} \\
		&= \norm{\mc{A}_q^\ga(f)}_{L^p(X)}
	\end{align*}
	as desired.
\end{proof}

\begin{rmk}
	If $X$ is doubling, and if $p,q \in (0,\infty)$, then for $\ga > 0$ we also have that
	\begin{equation*}
		\norm{\mc{A}_q^\ga(f)}_{L^p(X)} \lesssim_{p,q,X} \norm{\mc{C}_q^\ga(f)}_{L^p(X)}.
	\end{equation*}
	This can be proven as in \cite[\textsection 6]{CMS85}, completely analogously to the proofs above.
\end{rmk}

\begin{appendix}

\section{Assorted lemmas and notation}

\subsection{Tents, cones, and shadows}

\begin{lem}\label{tinclusion}
	Suppose $A$ and $B$ are subsets of $X$, with $A$ open, and suppose $T^\ga(A) \subset T^\ga(B)$.
	Then $A \subset B$.
\end{lem}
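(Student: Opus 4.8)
The plan is to argue by contrapositive: suppose $A \not\subset B$, and produce a point of $T^\ga(A)$ not in $T^\ga(B)$. Pick $x \in A \setminus B$. Since $A$ is open, there is $r > 0$ with $B(x,r) \subset A$. The idea is to exhibit a point $(x,t) \in X^+$ lying on the vertical line above $x$ which is in $T^\ga(A)$ but not in $T^\ga(B)$; here the relevant tool is the characterisation $T^\ga(O) = F_O^{-1}((\ga,\infty))$ with $F_O(y,t) = t^{-1}\dist(y,O^c)$, noted in the excerpt.

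First I would handle membership in $T^\ga(A)$. For $(x,t)$ we have $F_A(x,t) = t^{-1}\dist(x,A^c) \geq t^{-1} r$ (since $B(x,r) \subset A$ forces $\dist(x,A^c) \geq r$), so choosing $t$ small enough, say $t < r/\ga$, guarantees $F_A(x,t) > \ga$, i.e. $(x,t) \in T^\ga(A)$. Next I would show that for \emph{every} $t > 0$ the point $(x,t) \notin T^\ga(B)$: since $x \notin B$, we have $x \in B^c$, hence $\dist(x,B^c) = 0$, so $F_B(x,t) = 0 \leq \ga$, which means $(x,t) \notin T^\ga(B)$. Combining, the point $(x,t)$ with $t < r/\ga$ lies in $T^\ga(A) \setminus T^\ga(B)$, contradicting $T^\ga(A) \subset T^\ga(B)$. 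Therefore $A \subset B$.

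There is no real obstacle here; the only subtlety is being careful that $T^\ga(B)$ is defined for an arbitrary (possibly non-open) set $B$ via $T^\ga(B) = (\gG^\ga(B^c))^c$, and that the identity $T^\ga(O) = F_O^{-1}((\ga,\infty))$ as stated holds for arbitrary $O$ (inspecting the definitions, a point $(y,t)$ fails to be in $T^\ga(O)$ iff it lies in some cone $\gG^\ga(z)$ with $z \in O^c$, iff $d(y,z) < \ga t$ for some $z \in O^c$, iff $\dist(y,O^c) < \ga t$, which is exactly $F_O(y,t) \leq \ga$ — so the characterisation is fine without openness). Openness of $A$ is used only to get the ball $B(x,r) \subset A$, and it is genuinely needed: the conclusion can fail for non-open $A$.
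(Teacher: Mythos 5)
Your argument is correct and is essentially the paper's proof in contrapositive form: both hinge on openness of $A$ giving $\dist(x,A^c)>0$ (so $(x,t)\in T^\ga(A)$ for small $t>0$), and on $x\in B^c$ forcing $\dist(x,B^c)=0$ (so $(x,t)\notin T^\ga(B)$). The boundary quibble about whether the tent is $F_O^{-1}((\ga,\infty))$ or $F_O^{-1}([\ga,\infty))$ is immaterial here since your points satisfy the strict inequalities on both sides.
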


\begin{proof}
	Suppose $x \in A$.
	Then $\dist(x,A^c) > 0$ since $A$ is open, and so $\dist(x,A^c) > \ga t$ for some $t > 0$.
	Hence $(x,t) \in T^\ga(A) \subset T^\ga(B)$, so that $\dist(x,B^c) > \ga t > 0$.
	Therefore $x \in B$.
\end{proof}

\begin{lem}\label{shadows}
	Let $C \subset X^+$ be cylindrical, and suppose $\ga > 0$.
	Then $S^\ga(C)$ is bounded.
\end{lem}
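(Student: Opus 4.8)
The plan is to unwind the definition of the $\ga$-shadow and exploit that a point $(z,s)$ in a cone $\gG^\ga(x)$ satisfies $d(x,z) < \ga s$, while cylindricity of $C$ bounds $s$ from above. First I would write $C \subset B(x_0,r) \times (a,b) =: D$ for some $x_0 \in X$ and $a,b,r > 0$, as in the proof of Lemma \ref{cptest}. Since $S^\ga$ is monotone with respect to inclusion, it suffices to show that $S^\ga(D)$ is bounded.

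Next I would take an arbitrary $x \in S^\ga(D)$. By definition there is a point $(z,s) \in \gG^\ga(x) \cap D$, which means $z \in B(x,\ga s)$ (equivalently $d(x,z) < \ga s$) and $(z,s) \in B(x_0,r) \times (a,b)$. Hence $s < b$ and $d(z,x_0) < r$, so by the triangle inequality
\begin{equation*}
	d(x,x_0) \leq d(x,z) + d(z,x_0) < \ga s + r < \ga b + r.
\end{equation*}
Therefore $S^\ga(D) \subset B(x_0, \ga b + r)$, which is a bounded set, and consequently $S^\ga(C) \subset S^\ga(D)$ is bounded as well.

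There is really no substantive obstacle here: the only mild subtlety is remembering that cylindricity only gives an \emph{upper} bound $b$ on the time coordinate (a lower bound $a > 0$ is not needed for this estimate), and that the shadow of a subset is contained in the shadow of any cylinder containing it. The argument is a one-line triangle inequality once the definitions are in place.
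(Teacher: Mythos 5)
Your proof is correct and follows exactly the paper's argument: reduce to the shadow of the enclosing cylinder $B(x_0,r)\times(a,b)$ and show via the triangle inequality that it is contained in $B(x_0, r+\ga b)$. You have merely written out the step the paper labels ``one can easily show,'' so there is nothing further to add.
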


\begin{proof}
	Write $C \subset B(x,r) \times (a,b)$ for some $x \in X$ and $r,a,b > 0$.
	Then $S^\ga(C) \subset S^\ga(B(x,r) \times (a,b))$, and one can easily show that
	\begin{equation*}
		S^\ga(B(x,r) \times (a,b)) \subset B(x,r+\ga b),
	\end{equation*}
	showing the boundedness of $S^\ga(C)$.
\end{proof}

\begin{lem}\label{incln}
	Let $C \subset X^+$, and suppose $\ga > 0$.
	Then $T^\ga(S^\ga(C))$ is the minimal $\ga$-tent containing $C$, in the sense that $T^\ga(S) \supset C$ for some $S \subset X$ implies that $T^\ga(S^\ga(C)) \subset T^\ga(S)$.
\end{lem}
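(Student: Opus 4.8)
The plan is to reduce the lemma to the elementary reformulation that, for every $O \subset X$ and every $(y,t) \in X^+$, one has $(y,t) \in T^\ga(O)$ if and only if $B(y,\ga t) \subset O$. This comes straight from the definitions: since $T^\ga(O) = (\gG^\ga(O^c))^c$, the pair $(y,t)$ lies in $T^\ga(O)$ precisely when $(y,t) \notin \gG^\ga(x)$ for all $x \in O^c$, i.e.\ when $y \notin B(x,\ga t)$ for all $x \in O^c$, i.e.\ (by symmetry of $d$) when $B(y,\ga t)$ meets $O^c$ in no point. Alongside this I would record the analogous unwinding of the shadow: $x \in S^\ga(C)$ if and only if there is $(z,s) \in C$ with $x \in B(z,\ga s)$.

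With these two reformulations the first assertion, $C \subset T^\ga(S^\ga(C))$, is immediate. Fix $(y,t) \in C$; for any $x \in B(y,\ga t)$ the pair $(z,s) := (y,t) \in C$ witnesses $x \in B(z,\ga s)$, so $x \in S^\ga(C)$. Hence $B(y,\ga t) \subset S^\ga(C)$, which by the reformulation says exactly that $(y,t) \in T^\ga(S^\ga(C))$.

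For the minimality, let $S \subset X$ satisfy $T^\ga(S) \supset C$, and take $(y,t) \in T^\ga(S^\ga(C))$, so that $B(y,\ga t) \subset S^\ga(C)$. Given $x \in B(y,\ga t)$, we have $x \in S^\ga(C)$, hence there is $(z,s) \in C$ with $x \in B(z,\ga s)$; but $(z,s) \in C \subset T^\ga(S)$ gives $B(z,\ga s) \subset S$ by the reformulation, so $x \in S$. As $x$ ranges over $B(y,\ga t)$ this yields $B(y,\ga t) \subset S$, i.e.\ $(y,t) \in T^\ga(S)$. Therefore $T^\ga(S^\ga(C)) \subset T^\ga(S)$, as required.

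I expect no real obstacle: the argument is set-theoretic and uses no geometric hypotheses. The only thing to watch is the open-ball bookkeeping when passing between ``$(y,t) \in T^\ga(O)$'', ``$B(y,\ga t) \subset O$'', and membership in a shadow --- the balls must be the open balls throughout, and the tent condition corresponds to $\dist(y,O^c) \geq \ga t$ rather than to a strict inequality; a slip there would break the chain of inclusions.
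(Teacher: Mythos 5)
Your proof is correct and is essentially the paper's argument in different packaging: the paper unwinds $(w,t_w)\in T^\ga(O)$ as a lower bound on $\dist(w,O^c)$ and argues with points $y\in S^c$, whereas you use the equivalent reformulation $(y,t)\in T^\ga(O)\iff B(y,\ga t)\subset O$ and argue with points $x\in B(y,\ga t)$ --- the same set-theoretic manipulation read contrapositively. Your closing caution is also well placed: from the raw definitions the tent condition is $\dist(y,O^c)\geq \ga t$ (non-strict), so the open-ball containment $B(y,\ga t)\subset O$ is exactly the right invariant to carry through the chain of inclusions.
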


\begin{proof}
	A straightforward set-theoretic manipulation shows that $C$ is contained in $T^\ga(S^\ga(C))$.
	We need to show that $S^\ga(C)$ is minimal with respect to this property.
	
	Suppose that $S \subset X$ is such that $C \subset T^\ga(S)$, and suppose $(w,t_w)$ is in $T^\ga(S^\ga(C))$.
	With the aim of showing that $\dist(w,S^c) > \ga t_w$, suppose that $y \in S^c$.
	Then $\gG^\ga(y) \cap T^\ga(S) = \varnothing$, and so $\gG^\ga(y) \cap C = \varnothing$ since $T^\ga(S)$ contains $C$.
	Thus $y \in S^\ga(C)^c$, and so
	\begin{equation*}
		d(w,y) \geq \dist(w,S^\ga(C)^c) > \ga t_w
	\end{equation*}
	since $(w,t_w) \in T^\ga(S^\ga(C))$.
	Taking an infimum over $y \in S^c$, we get that
	\begin{equation*}
		\dist(w,S^c) > \ga t_w,
	\end{equation*}
	which says precisely that $(w,t_w)$ is in $T^\ga(S)$.
	Therefore $T^\ga(S^\ga(C)) \subset T^\ga(S)$ as desired.
\end{proof}

\begin{lem}\label{betas}
	For a cylindrical subset $K \subset X^+$, define
	\begin{equation*}
		\gb_0(K) := \inf_{B \subset X} \{\gm(B) : T^\ga(B) \cap K \neq \varnothing\} \quad \text{and} \quad \gb_1(K) := \inf_{B \subset X} \{\gm(B) : T^\ga(B) \supset K\},
	\end{equation*}
	with both infima taken over the set of balls $B$ in $X$.
	Then $\gb_1(K)$ is positive, and if $X$ is proper or doubling, then $\gb_0(K)$ is also positive.
\end{lem}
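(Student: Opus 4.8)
The plan is to fix a cylinder $C = B(x_0,r) \times (a,b)$ containing $K$ and to exploit the fact that any ball $B$ for which $T^\ga(B)$ either meets or contains $K$ must itself contain a ball $B(y,\ga a)$ whose centre $y$ lies in the base $B(x_0,r)$; positivity of $\gb_0$ and $\gb_1$ will then follow from a lower bound on $V(y,\ga a)$, which for $\gb_1$ is needed only at a single point and for $\gb_0$ is needed uniformly over $y \in B(x_0,r)$. (We may assume $K \neq \varnothing$, which is the only case used in the sequel.)

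For $\gb_1(K)$ I would fix once and for all a point $(y_0,t_0) \in K$. If $B$ is a ball with $T^\ga(B) \supseteq K$, then $(y_0,t_0) \in T^\ga(B) = F_B^{-1}((\ga,\infty))$, so $\dist(y_0,B^c) > \ga t_0 > \ga a$; hence $B(y_0,\ga a) \subseteq B(y_0,\ga t_0) \subseteq B$ and $\gm(B) \geq V(y_0,\ga a) > 0$. Since this bound is independent of $B$, we conclude $\gb_1(K) \geq V(y_0,\ga a) > 0$ with no geometric hypothesis on $X$.

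For $\gb_0(K)$, applying the same computation to a point $(y,t) \in T^\ga(B) \cap K$ yields $B(y,\ga a) \subseteq B$, hence $\gm(B) \geq V(y,\ga a) \geq \inf_{z \in B(x_0,r)} V(z,\ga a)$; it therefore suffices to show this infimum is positive. If $X$ is proper, then $V(\cdot,\ga a)$ is lower semicontinuous and $\overline{B(x_0,r)}$ is compact, so the infimum is attained and equals a positive value $V(z^*,\ga a)$. If $X$ is doubling with constant $C$, then $B(x_0,\ga a) \subseteq B(z,\ga a+r)$ for every $z \in B(x_0,r)$, and choosing $k \in \NN$ with $2^k \geq 1 + r/(\ga a)$ --- which depends only on $r/(\ga a)$ and $C$, not on $z$ --- and iterating the doubling inequality (together with monotonicity of $V(z,\cdot)$) gives
\begin{equation}\label{infimum}
  V(x_0,\ga a) \leq V(z,\ga a+r) \leq C^k V(z,\ga a), \qquad \text{so} \qquad \inf_{z \in B(x_0,r)} V(z,\ga a) \geq C^{-k} V(x_0,\ga a) > 0.
\end{equation}
In both cases $\gb_0(K) > 0$, completing the proof.

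The only point requiring care is the doubling estimate \eqref{infimum}: one must verify that the number $k$ of doubling steps can be chosen independently of $z$ so that the lower bound is genuinely uniform over $B(x_0,r)$, and one must keep track of the open versus closed balls and of the strict inequalities produced by the identity $T^\ga(B) = F_B^{-1}((\ga,\infty))$. The remainder is routine manipulation of the definitions of cones, tents, and shadows from Section \ref{initdefns}.
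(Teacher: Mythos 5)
Your proof is correct. For $\gb_0(K)$ you take essentially the paper's route, but with a cleaner reduction: where the paper constructs an auxiliary ball $\wtd{B} \subset B$ centred at a point of $\overline{B(x_0,r_0)}$ whose tent still meets the cylinder, you simply observe that $(y,t) \in T^\ga(B) \cap K$ forces $\dist(y,B^c) > \ga t > \ga a$, hence $B(y,\ga a) \subseteq B$ with $y$ in the base ball; both arguments then reduce to the same uniform bound $\inf_{z \in B(x_0,r)} V(z,\ga a) > 0$, handled identically in the proper case and by an equivalent doubling iteration (the paper compares $V(z,\ge)$ with $V(x_0,r_0)$ for $\ge = \min(\ga a, 3r)$, you compare $V(z,\ga a)$ with $V(x_0,\ga a)$ via $B(x_0,\ga a) \subseteq B(z,\ga a + r)$ --- the bookkeeping of the number of doubling steps is correct and uniform in $z$). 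For $\gb_1(K)$ your argument is genuinely different and more elementary: the paper invokes Lemmas \ref{incln} and \ref{tinclusion} to conclude $B \supseteq S^\ga(K)$ and bounds $\gm(B)$ below by $\gm(S^\ga(K))$ (a sharper bound, exact when the shadow is a ball), whereas you fix a single point $(y_0,t_0) \in K$ and get $B \supseteq B(y_0,\ga a)$, which suffices for positivity and avoids the shadow machinery entirely --- note that your version also makes explicit the (harmless) assumption $K \neq \varnothing$, which the paper leaves implicit.
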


\begin{proof}
	We first prove that $\gb_0 := \gb_0(K)$ is positive, assuming that $X$ is proper or doubling.
	Write
	\begin{equation*}
		K \subset \overline{C} := \overline{B(x_0,r_0)} \times [a_0,b_0]
	\end{equation*}
	for some $x_0 \in X$ and $a_0,b_0,r_0 > 0$.
	If $B$ is a ball such that $T^\ga(B) \cap K \neq \varnothing$, then we must have $T^\ga(B) \cap \overline{C} \neq \varnothing$, and so we can estimate
	\begin{equation*}
		\gb_0 \geq \inf_{B \subset X}\{\gm(B) : T^\ga(B) \cap \overline{C} \neq \varnothing\}.
	\end{equation*}

	Note that if $B = B(c(B),r(B))$ is a ball with $c(B) \in \overline{B(x_0,r_0)}$, then $T^\ga(B) \cap \overline{C} \neq \varnothing$ if and only if $r(B) \geq \ga a_0$.
	Defining
	\begin{equation*}
		I(x) := \inf\{V(x,r) : r > 0, T^\ga(B(x,r)) \cap \overline{C} \neq \varnothing\}
	\end{equation*}
	for $x \in X$, we thus see that $I(x) = V(x,\ga a_0)$ when $x \in \overline{B(x_0,r_0)}$, and so $I|_{\overline{B(x_0,r_0)}}$ is lower semicontinuous as long as the volume function is lower semicontinuous.
	
	Now suppose $B = B(y,\gr)$ is any ball with $T^\ga(B) \cap \overline{C} \neq \varnothing$.
	Let $(z,t_z)$ be a point in $T^\ga(B) \cap \overline{C}$.
	We claim that the ball
	\begin{equation*}
		\wtd{B} := B\left(z,\frac{1}{2}\left(\gr - d(z,y) + \ga t_z\right)\right)
	\end{equation*}
	is contained in $B$, centred in $\overline{B(x_0,r_0)}$, and is such that $T^\ga(\wtd{B}) \cap \overline{C} \neq \varnothing$.
	The second fact is obvious: $(z,t_z) \in \overline{C}$ implies $z \in \overline{B(x_0,r_0)}$.
	For the first fact, observe that
	\begin{align*}
		\wtd{B} &\subset B(y,d(z,y) + (\gr - d(z,y) + \ga t_z)/2) \\
		&= B(y,(\gr + d(z,y) + \ga t_z)/2) \\
		&\subset B(y,(\gr + (\gr - \ga t_z) + \ga t_z)/2) \\
		&= B(y,\gr),
	\end{align*}
	since $(z,t_z) \in T^\ga(B)$ implies that $d(z,y) < \gr - \ga t_z$.
	Finally, we have $(z,t_z) \in T^\ga(\wtd{B})$: since $c(\wtd{B}) = z$, we just need to show that $t_z < r(\wtd{B})/\ga$.
	Indeed, we have
	\begin{equation*}
		\frac{r(\wtd{B})}{\ga}
		= \frac{1}{2}\left(\frac{\gr - d(z,y)}{\ga} + t_z\right),
	\end{equation*}
	and $t_z < (\gr - d(z,y))/\ga$ as above.
	
	The previous paragraph shows that
	\begin	{equation*}
		\inf_{x \in X}I(x) \geq \inf_{x \in \overline{B(x_0,r_0)}} I(x),
	\end{equation*}
	and so we are reduced to showing that the right hand side of this inequality is positive, since $\gb_0 \geq \inf_{x \in X} I(x)$.
	
	\begin{description}
		\item[If $X$ is proper:]
		Since $\overline{B(x_0,r_0)}$ is compact and $I|_{\overline{B(x_0,r_0)}}$ is lower semicontinuous, $I|_{\overline{B(x_0,r_0)}}$ attains its infimum on $\overline{B(x_0,r_0)}$.
		That is,
		\begin{equation}\label{positivity}
			\inf_{x \in \overline{B(x_0,r_0)}} I(x) = \min_{x \in \overline{B(x_0,r_0)}} I_x > 0,
		\end{equation}
		by positivity of the ball volume function.
		
		\item[If $X$ is doubling:]
		Since $I(x) = V(x,\ga a_0)$ when $x \in \overline{B(x_0,r_0)}$, we can write
		\begin{equation*}
			\inf_{x \in \overline{B(x_0,r_0)}} I(x) \geq \inf_{x \in \overline{B(x_0,r_0)}} V(x,\ge),
		\end{equation*}
		where $\ge = \min(\ga a_0,3r_0)$.
		If $x \in \overline{B(x_0,r_0)}$, then $\overline{B(x_0,r_0)} \subset \overline{B(x,2r_0)} \subset B(x,3r_0)$, and so since $3r_0/\ge \geq 1$,
		\begin{align*}
			V(x_0,r_0) &\leq V(x,3r_0) \\
			&= V(x,\ge(3r_0/\ge)) \\
			&\lesssim_X V(x,\ge).
		\end{align*}
		Hence $V(x,\ge) \gtrsim_X V(x_0,r_0)$, and therefore
		\begin{equation}\label{infimum}
			\inf_{x \in \overline{B(x_0,r_0)}} V(x,\ge) \gtrsim V(x_0,r_0) > 0
		\end{equation}
		as desired.
	\end{description}
	
	We now prove that $\gb_1 = \gb_1(K)$ is positive.
	Recall from Lemma \ref{incln} that if $T^\ga(B) \supset K$, then $T^\ga(B) \supset T^\ga(S^\ga(K))$.
	Since shadows are open, Lemma \ref{tinclusion} tells us that $B \supset S^\ga(K)$.
	Hence $\gm(B) \geq \gm(S^\ga(K))$, and so
	\begin{equation*}
		\gb_1 \geq \gm(S^\ga(K)) > 0
	\end{equation*}
	by positivity of the ball volume function.\footnote{If $S^\ga(K)$ is a ball, then $\gb_1(K) = \gm(S^\ga(K))$.}
\end{proof}

\begin{lem}\label{trunccones}
	Let $B$ be an open ball in $X$ of radius $r$.
	Then for all $x \in B$, the truncated cone $\gG_r^\ga(x)$ is contained in $T^\ga((2\ga+1)B)$.
\end{lem}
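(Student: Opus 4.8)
The plan is to unwind the definition of the tent and reduce everything to two applications of the triangle inequality. Write $B = B(c(B), r)$, and recall from Section~\ref{initdefns} that a point $(y,t) \in X^+$ lies in $T^\ga(O)$ precisely when $\dist(y, O^c) > \ga t$. Fix $x \in B$ and a point $(y,t) \in \gG_r^\ga(x)$; by definition this means $d(x,y) < \ga t$ and $t < r$. The goal is to show that $\dist(y, ((2\ga+1)B)^c) > \ga t$.

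First I would bound the distance from $y$ to the centre of $B$: since $d(c(B), x) < r$ and $d(x,y) < \ga t < \ga r$ (here the truncation $t < r$ is used), the triangle inequality gives $d(c(B), y) < (1+\ga)r$. Next, for any $z$ in the complement of $(2\ga+1)B$, that is, any $z$ with $d(c(B), z) \geq (2\ga+1)r$, another application of the triangle inequality yields
\begin{equation*}
  d(y,z) \geq d(c(B),z) - d(c(B),y) > (2\ga+1)r - (1+\ga)r = \ga r > \ga t.
\end{equation*}
Taking the infimum over all such $z$ gives $\dist(y, ((2\ga+1)B)^c) \geq \ga r$, and since $\ga r > \ga t$ we conclude that $(y,t) \in T^\ga((2\ga+1)B)$, as required.

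There is no real obstacle here; the only thing to watch is the bookkeeping of strict versus non-strict inequalities. The strict truncation $t < r$ is precisely what creates the slack $\ga r > \ga t$ that survives the passage to the infimum over $z$ (which by itself yields only the non-strict bound $\dist(y,((2\ga+1)B)^c) \geq \ga r$), and this is exactly what makes the aperture $2\ga+1$ sufficient.
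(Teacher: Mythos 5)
Your proof is correct and is essentially the paper's own argument: both reduce to the chain $d(y,z) \geq d(c(B),z) - d(c(B),x) - d(x,y) > (2\ga+1)r - r - \ga r = \ga r > \ga t$, with the truncation $t<r$ supplying the needed slack. Your extra remark about the strict inequality surviving the infimum (via $\ga r > \ga t$) is a careful touch, but the route is the same.
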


\begin{proof}
	Suppose $(y,t) \in \gG_r^\ga(x)$ and $z \in ((2\ga+1)B)^c$, so that $d(y,x) < \ga t < \ga r$ and $d(c(B),z) \geq (2\ga + 1)r$.
	Then by the triangle inequality
	\begin{align*}
		d(y,z) &\geq d(c(B),z) - d(c(B),x) - d(x,y) \\
		&> (2\ga + 1)r - r - \ga r \\
		&= \ga r \\
		&> \ga t,
	\end{align*}
	so that $\dist(y,((2\ga + 1)B)^c) > \ga t$, which yields $(y,t) \in T^\ga((2\ga + 1)B)$.
\end{proof}

\subsection{Measurability}

We assume $(X,d,\gm)$ has the implicit assumptions from Section \ref{assumptions}.

\begin{lem}\label{measurability}
	Let $\ga > 0$, and suppose $\gF$ is a non-negative measurable function on $X^+$.
	Then the function
	\begin{equation*}
		g \colon x \mapsto \iint_{\gG^\ga(x)} \gF(y,t) \, d\gm(y) \, \frac{dt}{t}
	\end{equation*}
	is $\gm$-measurable.
\end{lem}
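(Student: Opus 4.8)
The plan is to realise $g$ as a measurable-parameter integral and then apply Fubini--Tonelli together with a continuity/semicontinuity argument for the inner integral. First I would fix $t > 0$ and study the function
\begin{equation*}
	G_t \colon x \mapsto \int_{B(x,\ga t)} \gF(y,t) \, d\gm(y) = \int_X \mb{1}_{B(x,\ga t)}(y) \gF(y,t) \, d\gm(y);
\end{equation*}
the claim is that $G_t$ is lower semicontinuous in $x$, hence Borel measurable. Indeed, if $x_n \to x$ in $X$, then $\mb{1}_{B(x,\ga t)}(y) \le \liminf_n \mb{1}_{B(x_n,\ga t)}(y)$ for every $y$, since $d(x,y) < \ga t$ forces $d(x_n,y) < \ga t$ for $n$ large; applying Fatou's lemma to the nonnegative integrands gives $G_t(x) \le \liminf_n G_t(x_n)$. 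So $x \mapsto G_t(x)$ is lower semicontinuous for each fixed $t$.

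Next I would address joint measurability in $(x,t)$. The cleanest route is to note that $(x,t,y) \mapsto \mb{1}_{B(x,\ga t)}(y)\gF(y,t)$ is a nonnegative measurable function on $X \times \RR_+ \times X$: the set $\{(x,t,y) : d(x,y) < \ga t\}$ is open in $X \times \RR_+ \times X$ by continuity of $(x,y) \mapsto d(x,y)$ and of $t \mapsto \ga t$, hence Borel; and $\gF(y,t)$ is measurable in $(y,t)$ by hypothesis, hence in $(x,t,y)$ after composing with the (continuous) coordinate projection. (Here one uses that the Borel $\gs$-algebra on the product of these separable metric spaces is the product $\gs$-algebra, exactly as invoked for $X^+$ in the body of the paper.) By Fubini--Tonelli, integrating out $y$ shows that $(x,t) \mapsto G_t(x)$ is measurable on $X \times \RR_+$; then integrating this against $dt/t$ over $\RR_+$, again by Fubini--Tonelli, yields that
\begin{equation*}
	g(x) = \int_0^\infty G_t(x) \, \frac{dt}{t} = \iint_{\gG^\ga(x)} \gF(y,t) \, d\gm(y) \, \frac{dt}{t}
\end{equation*}
is a measurable function of $x$, as required. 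The identification of the iterated integral with the integral over $\gG^\ga(x)$ is just the definition of the cone, $\gG^\ga(x) = \{(y,t) : y \in B(x,\ga t)\}$.

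The main obstacle is the joint measurability in $(x,t,y)$ of the combined integrand, i.e.\ making sure we genuinely have a measurable function on the triple product so that Fubini--Tonelli applies; once the cone region $\{d(x,y) < \ga t\}$ is seen to be Borel (which follows from continuity of the metric and the fact that the Borel and product $\gs$-algebras coincide here) and $\gF$ is lifted to the triple product along a continuous projection, the rest is routine. A minor point to be careful about is that $\gF$ is only assumed measurable, not Borel, with respect to the completion of the product measure; but this causes no difficulty, since Fubini--Tonelli holds for the completed product measure and $\gm$ is $\gs$-finite (as noted in Section~\ref{assumptions}), and slices of a function measurable for the completion remain measurable for a.e.\ fixed parameter, which is all that is needed.
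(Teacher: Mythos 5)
Your argument is correct, and its overall architecture is the same as the paper's first proof: establish joint measurability of $(x,(y,t)) \mapsto \mb{1}_{B(x,\ga t)}(y)\,\gF(y,t)$ on $X \times X^+$ and then integrate out $(y,t)$ by Tonelli. The difference lies in how joint measurability is obtained. The paper approximates $\mb{1}_{B(y,\ga t)}(x)$ by functions continuous in $x$ and measurable in $(y,t)$ and invokes a Carath\'eodory-type joint-measurability theorem; you instead observe that $\{(x,t,y) : d(x,y) < \ga t\}$ is open and appeal to the coincidence of the Borel and product $\gs$-algebras. Your route is more elementary, but it genuinely requires separability of $X$, which you assert without justification and which the paper never states (it only records that $\RR_+$ is separable). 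This is not a real obstruction, but it should be made explicit: separability of $X$ follows from the standing assumption $0 < V(x,r) < \infty$, since any $\delta$-separated subset of a ball yields pairwise disjoint balls of positive measure inside a ball of finite measure, hence is countable, and a maximal such set is $\delta$-dense; without separability the open set $\{d(x,y) < \ga t\}$ need not belong to the product $\gs$-algebra and Tonelli could not be applied to it. Two further remarks: your opening paragraph proving lower semicontinuity of $G_t$ for fixed $t$ is correct but is never used in the subsequent argument; and the paper also gives a second, entirely elementary proof showing that $g$ itself is lower semicontinuous (by noting that for $y \in B(x,\ga\ge)$ the vertically translated cone $\gG^\ga_\ge(x)$ is contained in $\gG^\ga(y)$ and using monotone convergence), which avoids product $\gs$-algebras and Tonelli altogether and yields a strictly stronger conclusion.
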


We present two proofs of this lemma: one uses an abstract measurability result, while the other is elementary (and in fact stronger, proving that $g$ is not only measurable but lower semicontinuous).

\begin{proof}[First proof.]
	By \cite[Theorem 3.1]{lM99}, it suffices to show that the function
	\begin{equation*}
		F(x,(y,t)) := \mb{1}_{B(y,\ga t)}(x) \gF(y,t)
	\end{equation*}
	is measurable on $X \times X^+$.
	For $\ge > 0$, define
	\begin{equation*}
		f_\ge(x,(y,t)) := \frac{\dist(x,\overline{B(y,\ga t)})}{\dist(x,\overline{B(y,\ga t))} + \dist(x,B(y,\ga t + \ge)^c)}.
	\end{equation*}
	Then $f_\ge(x,(y,t))$ is continuous in $x$, and converges pointwise to $\mb{1}_{B(y,\ga t)}(x)$ as $\ge \to 0$.
	Hence
	\begin{equation*}
		F(x,(y,t)) = \lim_{\ge \to 0} f_\ge(x,(y,t))\gF(y,t) =: \lim_{\ge \to 0} F_\ge(x,(y,t)),
	\end{equation*}
	and therefore it suffices to show that each $F_\ge(x,(y,t))$ is measurable on $X \times X^+$.
	Since $F_\ge$ is continuous in $x$ and measurable in $(y,t)$, $F_\ge$ is measurable on $X \times X^+$,\footnote{See \cite[Theorem 1]{kG72}, which tells us that $F_\ge$ is Lusin measurable; this implies Borel measurability on $X \times X^+$.} and the proof is complete.
\end{proof}

\begin{proof}[Second proof.]
	For all $x \in X$ and $\ge > 0$, define the vertically translated cone
	\begin{equation*}
		\gG_\ge^\ga(x) := \{(y,t) \in X^+ : (y,t-\ge) \in \gG^\ga(x)\} \subset \gG^\ga(x).
	\end{equation*}
	If $y \in B(x,\ga \ge)$, then is it easy to show that $\gG_\ge^\ga(x) \subset \gG^\ga(y)$: indeed, if $(z,t) \in \gG_\ge^\ga(x)$, then $d(z,x) < \ga(t-\ge)$, and so
	\begin{equation*}
		d(z,y) \leq d(z,x) + d(x,y) < \ga(t-\ge) + \ga\ge = \ga t.
	\end{equation*}
	
	For all $x \in X$ and $\ge > 0$, define
	\begin{equation*}
		g_\ge(x) := \iint_{\gG_\ge^\ga(x)} \gF(y,t) \, d\gm(y) \, \frac{dt}{t}.
	\end{equation*}
	For each $x \in X$, as $\ge \searrow 0$, we have $g_\ge(x) \nearrow g(x)$ by monotone convergence.
	Fix $\gl > 0$, and suppose that $g(x) > \gl$.
	Then there exists $\ge(x)$ such that $g_{\ge(x)}(x) > \gl$.
	If $y \in B(x,\ga\ge(x))$, then by the previous paragraph we have
	\begin{equation*}
		g(y) \geq g_{\ge(x)}(x) > \gl.
	\end{equation*}
	Therefore $g$ is lower semicontinuous, and thus measurable.
	
\end{proof}

\begin{lem}\label{inftymeas}
	Let $f$ be a measurable function on $X^+$, $q \in (0,\infty)$, and $\ga > 0$.
	Then $\mc{C}_q^\ga(f)$ is lower semicontinuous.
\end{lem}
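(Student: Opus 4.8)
The plan is to show directly that every strict superlevel set of $\mc{C}_q^\ga(f)$ is open; this is precisely lower semicontinuity. Fix $\gl \in \RR$ and suppose $x \in X$ satisfies $\mc{C}_q^\ga(f)(x) > \gl$. By the definition of $\mc{C}_q^\ga$ as a supremum over balls containing $x$, there exists a ball $B \ni x$ with
\begin{equation*}
	\left( \frac{1}{\gm(B)} \iint_{T^\ga(B)} |f(y,t)|^q \, d\gm(y) \, \frac{dt}{t} \right)^{\frac{1}{q}} > \gl.
\end{equation*}
(Here we use that $0 < \gm(B) < \infty$, which holds by our standing assumptions on the volume function; the value of the displayed quantity may be $+\infty$, which is harmless.)

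The key point — and essentially the only point — is that $B$ is \emph{open}, being a metric ball in the sense of Section \ref{assumptions}. Hence $B$ is an open neighbourhood of $x$, and for every $x^\prime \in B$ the ball $B$ is one of the balls appearing in the supremum defining $\mc{C}_q^\ga(f)(x^\prime)$. Consequently
\begin{equation*}
	\mc{C}_q^\ga(f)(x^\prime) \geq \left( \frac{1}{\gm(B)} \iint_{T^\ga(B)} |f(y,t)|^q \, d\gm(y) \, \frac{dt}{t} \right)^{\frac{1}{q}} > \gl
\end{equation*}
for all $x^\prime \in B$. Thus $B \subset \{x^\prime \in X : \mc{C}_q^\ga(f)(x^\prime) > \gl\}$, so this set contains an open neighbourhood of each of its points and is therefore open. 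Since $\gl$ was arbitrary, $\mc{C}_q^\ga(f)$ is lower semicontinuous, and in particular measurable.

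There is no real obstacle here: the argument is the same openness trick already used in the excerpt to show that shadows $S^\ga(C)$ are open and that $\mc{M}(\mb{1}_O)$ is lower semicontinuous. The only thing to be careful about is recording that the ball $B$ witnessing $\mc{C}_q^\ga(f)(x) > \gl$ can be reused verbatim for all nearby points precisely because balls in this paper are open — which is exactly why the \emph{uncentred} averages were chosen in the definition of $\mc{C}_q^\ga$ in the first place.
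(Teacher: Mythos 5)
Your proof is correct and is essentially identical to the paper's: both fix $\gl$, extract a ball $B \ni x$ witnessing $\mc{C}_q^\ga(f)(x) > \gl$, and observe that the same (open) ball is admissible in the supremum for every $x' \in B$, so the superlevel set is open. No further comment is needed.
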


\begin{proof}
	Let $\gl > 0$, and suppose $x \in X$ is such that $\mc{C}_q^\ga(f)(x) > \gl$.
	Then there exists a ball $B \ni x$ such that
	\begin{equation*}
		\frac{1}{\gm(B)} \iint_{T^\ga(B)} |f(y,t)|^q \, d\gm(y) \, \frac{dt}{t} > \gl^q.
	\end{equation*}
	Hence for any $z \in B$, we have $\mc{C}_q^\ga(f)(z) > \gl$, and so the set $\{x \in X : \mc{C}_q^\ga(f)(x) > \gl\}$ is open.
\end{proof}

\subsection{Interpolation}\label{ainterpolation}

Here we fix some notation involving complex interpolation.

An \emph{interpolation pair} is a pair $(B_0,B_1)$ of complex Banach spaces which admit embeddings into a single complex Hausdorff topological vector space.
To such a pair we can associate the Banach space $B_0 + B_1$, endowed with the norm
\begin{equation*}
	\norm{x}_{B_0 + B_1} := \inf\{\norm{x_0}_{B_0} + \norm{x_1}_{B_1} : x_0 \in B_0, x_1 \in B_1, x=x_0 + x_1\}.
\end{equation*}
We can then consider the space $\mc{F}(B_0,B_1)$ of functions $f$ from the closed strip
\begin{equation*}
	\overline{S} = \{z \in \CC : 0 \leq \Re(z) \leq 1\}
\end{equation*}
into the Banach space $B_0 + B_1$, such that
\begin{itemize}
	\item $f$ is analytic on the interior of $S$ and continuous on $\overline{S}$,
	\item $f(z) \in B_j$ whenever $\Re(z) = j$ ($j \in \{0,1\}$), and
	\item the traces $f_j := f|_{\Re z = j}$ ($j \in \{0,1\}$) are continuous maps into $B_j$ which vanish at infinity.
\end{itemize}
The space $\mc{F}(B_0,B_1)$ is a Banach space when endowed with the norm
\begin{equation*}
	\norm{f}_{\mc{F}(B_0,B_1)} := \max\left(\sup_{\Re z = 0} \norm{f(z)}_{B_0}, \sup_{\Re z = 1} \norm{f(z)}_{B_1}\right).
\end{equation*}
We define the \emph{complex interpolation space} $[B_0,B_1]_\gq$ for $\gq \in [0,1]$ to be the subspace of $B_0 + B_1$ defined by
\begin{equation*}
	[B_0,B_1]_\gq := \{f(\gq) : f \in \mc{F}(B_0,B_1)\}
\end{equation*}
endowed with the norm
\begin{equation*}
	\norm{x}_{[B_0,B_1]_\gq} := \inf_{f(\gq) = x} \norm{f}_{\mc{F}(B_0,B_1)}.
\end{equation*}

\end{appendix}
\footnotesize
\bibliographystyle{amsplain}
\bibliography{analysis2}
  
\end{document}